\newcommand{\nd}{\ensuremath {{d}}} %%% dimension variable in this file: d
\newcommand{\beq}{\begin{eqnarray}}
\newcommand{\eeq}{\end{eqnarray}}
\newcommand{\beqq}{\begin{eqnarray*}}
\newcommand{\eeqq}{\end{eqnarray*}}
\newcommand{\bit}{\begin{itemize}}
\newcommand{\eit}{\end{itemize}}
\newcommand{\benu}{\begin{enumerate}}
\newcommand{\eenu}{\end{enumerate}}
\newcommand{\ds}{\displaystyle}
\newcommand{\R}{\ensuremath{\mathbb R}}\newcommand{\real}{\R}
\newcommand{\Rn}{\ensuremath{{\mathbb R}^{\nd}}} \newcommand{\rn}{\Rn}
\newcommand{\N}{{\ensuremath{\mathbb N}}}\newcommand{\nat}{\N}
\newcommand{\No}{\ensuremath{\N_{0}}}\newcommand{\no}{\N_0}
\newcommand{\Z}{\mathbb Z} 
\newcommand{\Zn}{\Z^{\nd}}
\newcommand{\C}{{\mathbb C}}
\newcommand{\SRn}{\mathcal{S}(\Rn)}
\newcommand{\SpRn}{\mathcal{S}'(\Rn)}
\newcommand{\cD}{\mathcal{D}}
\newcommand{\dint}{\;\mathrm{d}}
\newcommand{\Ft}{\mathcal{F}}
\newcommand{\Fti}{\mathcal{F}^{-1}}
\def\dist{\mathop{\rm dist}\nolimits}    %NEW
\newcommand{\A}{\ensuremath{A^s_{p,q}}}  
\newcommand{\Ae}{\ensuremath{A^{s_1}_{p_1,q_1}}}  
\newcommand{\Az}{\ensuremath{A^{s_2}_{p_2,q_2}}}  
\newcommand{\B}{\ensuremath{B^s_{p,q}}}  
\newcommand{\be}{\ensuremath{B^{s_1}_{p_1,q_1}}}  
\newcommand{\bz}{\ensuremath{B^{s_2}_{p_2,q_2}}}  
\newcommand{\F}{\ensuremath{F^s_{p,q}}}
\def\supp{\mathop{\rm supp}\nolimits}
\newcommand{\bpr}{\begin{proof}}
\newcommand{\epr}{\end{proof}}
\newcommand{\bli}{\begin{list}{}{\labelwidth6mm\leftmargin8mm}}
\newcommand{\eli}{\end{list}}
\def\id{\mathop{\rm id}\nolimits}
\newcommand{\tr}{\ensuremath{\mathop{\mathrm{tr}}\nolimits}}
\newcommand{\ignore}[1]{}
\newcommand{\nn}[1]{\ensuremath \nu( #1)}
\newcommand{\tn}{\ensuremath\mathbf{t}}
\numberwithin{equation}{section}
\newtheorem{lemma}{Lemma}[section]
\newtheorem{corollary}[lemma]{Corollary}
\newtheorem{proposition}[lemma]{Proposition}
\newtheorem{theorem}[lemma]{Theorem}
\theoremstyle{definition}
\newtheorem{definition}[lemma]{Definition}
\newtheorem{example}[lemma]{Example}
\theoremstyle{remark}
\newtheorem{remark}[lemma]{Remark}
\title{Nuclear embeddings in general vector-valued sequence spaces with an application to Sobolev embeddings of function spaces on quasi-bounded domains}
\author{Dorothee D. Haroske\footnotemark[1], \ Hans-Gerd Leopold \ and Leszek Skrzypczak\footnotemark[1]\ \footnotemark[2]}
\date{\today}
\begin{document}

\maketitle

\footnotetext[1]{Both authors were partially supported by the German Research Foundation (DFG), Grant no. Ha 2794/8-1.}
\footnotetext[2]{The author was supported by National Science Center, Poland,  Grant no. 2013/10/A/ST1/00091.}

%\ignore{
\begin{abstract}
We study nuclear embeddings for spaces of Besov and Triebel-Lizorkin
  type defined on quasi-bounded domains $\Omega\subset\Rn$. The counterpart for such function spaces defined on bounded domains has been considered for a long time and the complete answer was obtained only recently. Compact embeddings for function spaces defined on quasi-bounded domains have already been studied in detail, also concerning their entropy and $s$-numbers. We now prove the first and complete nuclearity result in this context. The concept of nuclearity has  already been introduced by Grothendieck in 1955.
  Our second main contribution is the generalisation of the famous Tong result (1969) which characterises nuclear diagonal operators acting between sequence spaces of $\ell_r$ type, $1\leq r\leq\infty$. We can now extend this to the setting of general vector-valued sequence spaces of type $\ell_q(\beta_j \ell_p^{M_j})$ with $1\leq p,q\leq\infty$, $M_j\in\no$ and a weight sequence with $\beta_j>0$. In particular, we prove a criterion for the embedding $\id_\beta : \ell_{q_1}(\beta_j \ell_{p_1}^{M_j}) \hookrightarrow \ell_{q_2}(\ell_{p_2}^{M_j})$ to be nuclear.\\

\noindent  {\em Keywords:}~ nuclear embeddings, vector-valued sequence spaces, function spaces on quasi-bounded domains \\
  {\em MSC (2010):}~46E35, 47B10
  
\end{abstract}
%}

\section*{Introduction}

  Grothendieck introduced the concept of nuclearity in \cite{grothendieck} more than 60 years ago. It provided the basis for many famous developments in functional analysis afterwards, for instance Enflo used nuclearity in his famous solution \cite{enflo} of the approximation problem, a long-standing problem of Banach from the Scottish Book. We refer to \cite{Pie-snumb,Pie-op-2}, and, in particular, to \cite{pie-history} for further historic details.

Let $X,Y$ be Banach spaces, $T\in \mathcal{L}(X,Y)$ a linear and bounded operator. Then $T$ is called {\em nuclear}, denoted by $T\in\mathcal{N}(X,Y)$, 
if there exist elements $a_j\in X'$, the dual space of $X$, and $y_j\in Y$, $j\in\mathbb{N}$, such that $\sum_{j=1}^\infty \|a_j\|_{X'} \|y_j\|_Y < \infty$ 
and a nuclear representation $Tx=\sum_{j=1}^\infty a_j(x) y_j$ for any $x\in X$. Together with the {\em nuclear norm}
\[
 \nu(T)=\inf\Big\{ \sum_{j=1}^\infty   \|a_j\|_{X'} \|y_j\|_Y:\ T =\sum_{j=1}^\infty a_j(\cdot) y_j\Big\},
  \]
  where the infimum is taken over all nuclear representations of $T$, the space $\mathcal{N}(X,Y)$ becomes a Banach space. It is obvious that 
	nuclear operators are, in particular, compact.
	
  Already in the early years there was a strong interest to study examples of nuclear operators beyond diagonal operators in $\ell_p$ 
	sequence spaces, where a complete answer was obtained in \cite{tong}.
	% (with some partial forerunner in \cite{Pie-op-2}). 
	Concentrating on embedding operators in spaces of Sobolev type, first results can be found, for instance, in {\cite{PiTri}, \cite{Pie-r-nuc}, \cite{Parfe-2}}. We noticed an increased interest in studies of nuclearity in the last years.  Dealing with the Sobolev embedding for spaces on a bounded domain, some of the recent papers we 
have in mind are \cite{EL-4,EGL-3,Tri-nuclear,CoDoKu,CoEdKu} using quite different techniques however. 

There might be several reasons for this. For example, the problem to describe a compact operator outside the Hilbert space setting is a partly
 open and very important one. 
 It is well known from the remarkable Enflo result \cite{enflo} that there are compact operators
 between Banach spaces which cannot be approximated by finite-rank operators.
 This led to a number of -- meanwhile well-established  and famous -- methods to circumvent this difficulty and find alternative ways to `measure' the compactness or `degree' of compactness of an operator. It can be described by the asymptotic behaviour of its approximation or entropy numbers, 
which are basic tools for many different problems nowadays, e.g. eigenvalue distribution of compact operators in Banach spaces, 
optimal approximation of Sobolev-type embeddings, but also for numerical questions. In all these problems, the decomposition
of a given compact operator into a series is an essential proof technique. It turns out that in many of the recent contributions \cite{Tri-nuclear,CoDoKu,CoEdKu} studying nuclearity, a key tool in the arguments are new decomposition techniques as well, adapted to the different spaces. Inspired by the nice paper \cite{CoDoKu} we also used such arguments in our paper \cite{HaSk-nuc-weight}, and intend to follow this strategy here again.

We have two goals in this paper: we want to inquire into the nature of compact embeddings when the function spaces of Besov and Triebel-Lizorkin type are defined on certain unbounded domains. It is well known, that such function spaces defined on $\rn$ never admit a compact, let alone nuclear embedding. But replacing $\rn$ by a bounded Lipschitz domain $\Omega\subset\rn$, then the question of nuclearity has already been solved, cf. \cite{Pie-r-nuc} (with a forerunner in \cite{PiTri}) for the sufficient conditions, and 
the recent paper \cite{Tri-nuclear} with some forerunner in \cite{Pie-r-nuc} and partial results in \cite{EGL-3,EL-4} for the necessity of the conditions. In \cite{HaSk-nuc-weight} we also contributed a little to these questions. More precisely, for Besov spaces on bounded Lipschitz domains, $B^s_{p,q}(\Omega)$, it is well known that
\[
\id_\Omega^B : \be(\Omega) \hookrightarrow \bz(\Omega)\] %\quad
\text{is nuclear\quad if, and only if,} %\quad
\[    s_1-s_2 > \nd-\nd \max\left(\frac{1}{p_2}-\frac{1}{p_1},0\right),
\]
where $1\leq p_i,q_i\leq \infty$, $s_i\in\real$, $i=1,2$. So a natural question appears whether for some unbounded domains compactness, or now nuclearity, of the corresponding embedding is possible. 
This question has already been answered  in the affirmative, concerning special, so-called quasi-bounded domains, and its compactness. In \cite{DZ,Leo-Sk-2, Leo-Sk-3} the above-mentioned `degree' of this compactness has been characterised in terms of the asymptotic behaviour of $s$-numbers and entropy numbers of $\id^B_\Omega$.
Now we study its nuclearity and find almost complete characterisations below, depending on {some} number $b(\Omega)$ which describes the {size} of such an unbounded domain; we refer for all definitions and details to Section~\ref{sect-fs}. We find, in Theorem~\ref{nuclear-quasi} below, that for domains $\Omega$ with $b(\Omega)=\infty$, there is a nuclear embedding if, and only if, we are in the extremal situation $p_1=1$, $p_2=\infty$, and $s_1-s_2>\nd$, while in the case $b(\Omega)<\infty$ the conditions look more similar to the above ones for bounded domains; e.g., for such quasi-bounded domains $\id_\Omega^B$ is nuclear, if
\[
s_1-s_2 -\nd \left(\frac{1}{p_1}-\frac{1}{p_2}\right) > \frac{b(\Omega)}{\tn(p_1,p_2)},
\]
where $1\leq p_i,q_i\leq\infty$, $i=1,2$, $s_1>s_2$, and the new quantity $\tn(p_1,p_2)$ is defined by $\ \frac{1}{\tn(p_1,p_2)} = 1 - \max (\frac{1}{p_1}-\frac{1}{p_2},0)$. For bounded Lipschitz domains $\Omega$ one has $b(\Omega)=d$ and then the above-mentioned previous result is recovered. 
As already indicated, we follow here the general ideas presented in \cite{CoDoKu} which use decomposition techniques and benefit from  Tong's result \cite{tong} about nuclear diagonal operators acting in sequence spaces of type $\ell_p$. In our situation, however, it turned out that we need more general vector-valued sequence spaces of type $\ell_q(\beta_j \ell_p^{M_j})$, {cf. Definition \ref{seq-sp-def1} below}. 
%  which contain  complex sequences $x=(x_{j,m})_{j\in\no, m=1, \dots, M_j}$ such %that
%\[ 
 % \left\|x|\ell_q\left(\beta_j \ell_p^{M_j}\right)\right\| = \Big(\sum_{j=0}^\infty %\beta_j^q \big(\sum_{k=1}^{M_j} |x_{j,k}|^p\Big)^{\frac{q}{p}}\Big)^{\frac1q}
%\]
%is finite (with appropriate modifications if $p=\infty$ or $q=\infty$). Here %$\beta_j>0$ and $M_j\in \no$. 
So we were led to the study of the embedding
\[
\id_\beta : \ell_{q_1}(\beta_j \ell_{p_1}^{M_j}) \hookrightarrow \ell_{q_2}(\ell_{p_2}^{M_j})
\]
in view of its nuclearity. Concerning its compactness this has already been investigated in \cite{Leo-00,Leo-fsdona}, but we did not find the corresponding nuclearity result in the literature and decided to study this question first. We tried to follow Tong's ideas in \cite{tong} as much as possible and could finally prove that for $1\leq  p_i, q_i\leq\infty$, $i=1,2$, 
\[ \id_\beta \quad\text{is nuclear\quad if, and only if, }\quad \left(\beta_j^{-1} M_j^{\frac{1}{\tn(p_1,p_2)}}\right)_{j\in\no} \in \ell_{\tn(q_1,q_2)}.
\]
This is the first main outcome of our paper and, to the best of our knowledge, also the first result in this direction.  In case of $M_j\equiv 1$ this coincides with Tong's result \cite{tong}.

The paper is organised as follows. In Section~\ref{prelim} we recall basic facts 
about the sequence and function spaces we shall work with, Section~\ref{sect-nuc} is devoted to the question of nuclear embeddings in general vector-valued sequence spaces, with our first main outcome in Theorem~\ref{nucl-seq-sp}. In Section~\ref{sect-fs} we return to the setting of function spaces, now appropriately extended to function spaces on certain unbounded, so-called quasi-bounded domains. We are able to prove an almost complete result for the corresponding Besov spaces on quasi-bounded domains in Theorem~\ref{nuclear-quasi}, using appropriate wavelet decompositions and our findings in Section~\ref{sect-nuc}. Finally this also leads to a corresponding result for Triebel-Lizorkin spaces on quasi-bounded domains.

\section{Sequence and function spaces}\label{prelim}
We start to define the sequence spaces and function spaces we are interested in. First of all we need to fix some notation.
By $\N$ we denote the set of natural numbers, 
by $\No$ the set $\N \cup \{ 0\}$, 
and by $\Zn$ the set of all lattice points
in $\Rn$ having integer components.  

Let $a_+ = \max(a,0)$, $a\in\real$. For two positive real sequences 
$(a_k)_{k\in \No}$ and $(b_k)_{k\in \No}$ we mean by 
$a_k\sim b_k$ that there exist constants $c_1,c_2>0$ such that $c_1 a_k\leq
b_k\leq c_2  a_k$ for all $k\in \No$; similarly for positive
functions.   

Given two (quasi-) Banach spaces $X$ and $Y$, we write $X\hookrightarrow Y$
if $X\subset Y$ and the natural embedding of $X$ in $Y$ is continuous. 

All unimportant positive constants will be denoted by $c$, occasionally with
subscripts. For convenience, let both $ \dint x\ $ and $\ |\cdot|\ $ stand for the
($\nd$-dimensional) Lebesgue measure in the sequel.

%%%%%%%%%%%

%%%%%%%%%%%

\subsection{Sequence spaces}
We begin with the definition of weighted vector-valued sequence spaces. We consider a general weight sequence $(\beta_j)_{j=0}^\infty$ of positive real numbers and and a sequence  $(M_j)_{j=0}^\infty$ of positive integers that are dimensions of finite-dimensional vector spaces. 
\begin{definition}\label{seq-sp-def1}
  Let $0<p,q\leq\infty$, $(\beta_j)_{j\in\no}$ be a weight sequence, that is $\beta_j > 0$, and $(M_j)_{j\in\no}$ be a sequence of natural numbers. Then
  \begin{align*}
    \ell_q\left(\beta_j \ell_p^{M_j}\right) =  \Big\{ & x=(x_{j,k})_{j\in\no, k=1, \dots, M_j}\ : \ {x_{j,k}\in\C}, \\  
& \left\|x|\ell_q\left(\beta_j \ell_p^{M_j}\right)\right\| = \Big(\sum_{j=0}^\infty \beta_j^q \big(\sum_{k=1}^{M_j} |x_{j,k}|^p\Big)^{\frac{q}{p}}\Big)^{\frac1q}<\infty
    \Big\},
  \end{align*}
with the usual modifications if $p=\infty$ and/or $q=\infty$.
\end{definition}

\begin{remark} %We collect some more or less immediate observations.
%  \benu[(i)]
%  \item
The spaces $\ell_q(\beta_j \ell_p^{M_j})$ are quasi-Banach spaces (Banach spaces if $p,q\geq 1$). 
%  \item
If $M_j \equiv 1$, then the space $ \ell_q(\beta_j \ell_p^{M_j})$ coincides with the usual weighted space  $\ell_q\left(\beta_j\right)$, that is, for $\beta_j\equiv 1$, nothing else than $\ell_q$.
%\eenu
\end{remark}

We recall necessary and sufficient conditions for  the  compactness of an embedding of  the sequence spaces. Let us introduce the following notation, which will be important for us in the sequel: for 
 $0<p_i,q_i\leq\infty$, $i=1,2$, we define 
\begin{equation}\label{pq-star}
  \frac{1}{p^*} := \left( \frac{1}{p_2} - \frac{1}{p_1}\right)_+\qquad\text{and}\qquad  %\max\left( \frac{1}{p_2} - \frac{1}{p_1},0\right),
 \frac{1}{q^*} := \left( \frac{1}{q_2} - \frac{1}{q_1}\right)_+ %\max\left( \frac{1}{q_2} - \frac{1}{q_1},0\right) 
\end{equation}
(with the understanding that $p^\ast=\infty$ when $p_1\leq p_2$, $q^\ast=\infty$ when $q_1\leq q_2$). Recall that $c_0$ denotes the subspace of $\ell_\infty$ containing the null sequences. 

%%%%%%%%%%%%%%%%%%%%%%%%%%%%%%
  \begin{remark}\label{rem-compact-seq}
    We are especially interested in properties of the embedding
  \begin{equation}\label{id_beta}
	\id_\beta : \ell_{q_1}\left(\beta_j \ell_{p_1}^{M_j}\right) \rightarrow 
	\ell_{q_2}\left(\ell_{p_2}^{M_j}\right). 
\end{equation}
It was proved in \cite{Leo-00,Leo-fsdona} that the embedding is compact if, and only if,
\begin{align}\label{cond-comp-id_beta}
	\ds  \left(\beta_j^{-1} M_j^{\frac{1}{p^\ast}}\right)_{j\in\no} \in \ell_{q^\ast}, 
\end{align}
where for $q^\ast=\infty$ the space $\ell_\infty$ has to be replaced by $c_0$. 	 In Section~\ref{sect-nuc} below we study its nuclearity.
\end{remark}

%%%%%%%%%%%%%%%%%%%%%%%%%

%%%%%%%%%%%%%%%%%%%%%%%%%%%%
%The following proposition  was proved in \cite{Leo-00,Leo-fsdona}.
%\begin{proposition}\label{compact-seq}
%  Let $0<p_1,p_2\leq\infty$, $0<q_1,q_2\leq\infty$, $(\beta_j)_{j\in\no}$ be an %arbitrary weight sequence and $(M_j)_{j\in\no}$ be a sequence of natural numbers. %Then the embedding
%  \begin{equation}\label{id_beta}
%\id_\beta : \ell_{q_1}\left(\beta_j \ell_{p_1}^{M_j}\right) \rightarrow 
%\ell_{q_2}\left(\ell_{p_2}^{M_j}\right)
%  \end{equation}
%  is compact if, and only if,
%\begin{align}\label{cond-comp-id_beta}
% 	 \ds  \left(\beta_j^{-1} M_j^{\frac{1}{p^\ast}}\right)_{j\in\no} \in %\ell_{q^\ast}, 
% 	\end{align}
%  where for $q^\ast=\infty$ the space $\ell_\infty$ has to be replaced by $c_0$. 	 
%\end{proposition}
%
%We refer to the papers \cite{Leo-00,Leo-fsdona} for further results on the %continuity of the above embedding $\id_\beta$ given by \eqref{id_beta}, in %Section~\ref{sect-nuc} below we study its nuclearity.
%%%%%%%%%%%%%%%%%%%%%%%%%%%%%%%%%%

\subsection{Function spaces and compact Sobolev embeddings}
\label{sect-1}

We now consider some function spaces. Let $\ 0<p\leq \infty$. Then the Lebesgue space $L_p(\Rn)$ contains all
measurable functions such that  
\begin{equation}
\|f\;|L_p(\Rn)\|= \left( \int_{\Rn}|f(x)|^p  \dint x\right)^{1/p},\quad 0<p<\infty,
\end{equation}
is finite, where for $p=\infty$ this is the classical Lebesgue space of measurable, essentially bounded functions, $L_\infty(\Rn)$.

The Schwartz space $\ \SRn\ $ and its dual $\ \SpRn\ $ of all 
complex-valued tempered distributions have their usual meaning here.
Let  $\ \varphi_0=\varphi \in \SRn\ $ be such that  
\begin{equation}
\supp \varphi\subset\left\{y\in\Rn:|y|<2\right\}\quad \mbox{and}\quad
\varphi(x)=1\quad\mbox{if}\quad |x|\leq 1\;,
\end{equation}
and for each $\ j\in\N\;$ let $\ \varphi_j(x)=
\varphi(2^{-j}x)-\varphi(2^{-j+1}x)$. Then $\ (\varphi_j)_{j=0}^\infty $
forms a {\em smooth dyadic resolution of unity}. Given any $\ f\in \SpRn$, we
denote by $\Ft f$ and $\Fti f$ its Fourier transform and its
inverse Fourier transform, respectively.
%Let $f\in \SpRn$, then
%the compact support of $\varphi_j \Ft f$ implies by the Paley-Wiener-Schwartz
%theorem that $\Fti(\varphi_j \Ft f)$ is an entire analytic 
%function on $\Rn$.

\begin{definition}
Let $0 <p,q\leq \infty$, $\ s\in \mathbb{R}$ and $\ \left(\varphi_j\right)_j$
a smooth dyadic resolution of unity. 
\benu[\bfseries\upshape (i)]
\item
  {\em The Besov  space}
  $B_{p,q}^{s}(\rn)$ is the set of all distributions $f\in \SpRn$ such that 
\begin{align}\label{B}
\big\|f\;|B_{p,q}^{s}(\rn)\big\|=
\left\| \left( 2^{js}\big\|\mathcal{F}^{-1}(\varphi_j\mathcal{F}f)|
L_p(\rn)\big\|\right)_{j\in\no} | \ell_q\right\| 
\end{align}
is finite. 
\item Assume $0<p<\infty$. {\em The Triebel-Lizorkin  space} $F_{p,q}^{s}(\rn)$  
is the set of all distributions $f\in   \SpRn$ such that 
\begin{align}\label{Fw}
\big\|f\;|F_{p,q}^{s}(\rn)\big\|=\left\| \big\| \left(2^{js}|
 \mathcal{F}^{-1}(\varphi_j\mathcal{F}f)(\cdot)|\right)_{j\in\no} |
 \ell_q\big\|~ |L_p(\rn)\right\| 
\end{align} 
is finite. 
\eenu
\end{definition}

\begin{remark}
The  spaces $B_{p,q}^s(\rn)$ and $F_{p,q}^s(\rn)$ are
independent of the particular choice of the smooth dyadic resolution of unity
$\left(\varphi_j\right)_j $ appearing in their definitions. They are quasi-Banach spaces
(Banach spaces for $p,q\geq 1$), and $\ \SRn \hookrightarrow \B(\Rn)\hookrightarrow \SpRn$, similarly for the $F$-case, where the
first embedding is dense if $p,q<\infty$; we
refer, in particular, to the series of monographs by {Triebel} 
\cite{T-F1,T-F2,T-Frac,T-F3} for a 
comprehensive treatment of these spaces. \\
 Concerning (classical) Sobolev spaces $W^k_p(\Rn) $ built
upon $L_p(\Rn)$ in the usual way, it holds 
\beq
W^k_p(\Rn) = F^k_{p,2}(\Rn), \quad k\in\No, \quad 1<p<\infty.
\label{W=F}
\eeq
\end{remark}

{\em Convention}. We adopt the nowadays usual custom to write $\A$ instead of $\B$ or $\F$, respectively,
when both scales of spaces are meant simultaneously in some context (but
always with the understanding of the same choice within one and the same
embedding, if not otherwise stated explicitly).

%\ignore{
\begin{remark}\label{A-emb}
  Occasionally we use the following elementary embeddings. %which are natural extensions from the unweighted case.
  If $0<q\leq\infty$,  $0<q_0\leq q_1\leq\infty$, $\ 0<p<\infty$, $\ s, s_0,s_1\in \mathbb{R}$ with $s_1\leq s_0$, %and $\ w\in\mathcal{A}_\infty$,
  then $A^{s_0}_{p,q}(\Rn) \hookrightarrow
A^{s_1}_{p,q}(\Rn)$, $A^{s}_{p,q_0}(\Rn) \hookrightarrow A^{s}_{p,q_1}(\Rn)$, and
\begin{equation}\label{B-F-B}
B^s_{p, \min(p,q)}(\Rn) \hookrightarrow \F(\Rn) \hookrightarrow B^s_{p,
  \max(p,q)}(\Rn). 
\end{equation}
\end{remark}
%}

It is well-known that embeddings of type
\[
\id_{\rn} : \Ae(\rn) \hookrightarrow \Az(\rn)
\]
can never be compact, so in the sequel we turn our attention to embeddings of function spaces on domains which admit compact -- and even nuclear -- embeddings.

\subsubsection*{Function spaces on  domains}

%\bigskip
%BEGIN NEW

%\subsection{Function spaces on arbitrary domains}
Let $\Omega$ be an open set in $\R^d$ such that $\Omega\not= \R^d$. Such a set will  be called an arbitrary domain. {We  denote the collection of  all distributions on $\Omega$ by $\cD'(\Omega)$. }
%We are interested in function spaces on such domains $\Omega$ which have also a wavelet characterization similar to that in Theorem \ref{waveweight}. 
As usual the spaces  $B^s_{p,q}(\Omega)$ and $F^s_{p,q}(\Omega)$ are defined on $\Omega$ by restriction, i.e., using our above convention,
\[
\A(\Omega) = \{f\in \cD'(\Omega): \ \exists\ g\in \A(\rn)\quad \text{with}\ g|_\Omega = f\},
\]
equipped with the quotient norm, as usual,
\[
\| f| \A(\Omega)\| = \inf\left\{ \|g|\A(\rn)\|: \ g\in\A(\rn)\quad \text{with}\ g|_\Omega = f\right\}.
\]
Consequently we have counterparts of the embeddings mentioned in Remark~\ref{A-emb}.

%and   $A^s_{p,q}(\R^n)$, $A^s_{p,q}(\Omega)$ with $A=B$ or $A=F$.

In the classical setting of bounded Lipschitz domains the compactness of Sobolev embeddings is well known and  the following proposition holds, {cf. \cite[Proposition 4.6]{T-F3}.}
%We assume that the reader is familiar with definitions and basic  facts concerning Besov spaces $B^s_{p,q}(\R^n)$ and Triebel-Lizorkin spaces  $F^s_{p,q}(\R^n)$ defined on $\R^n$ as well as the Besov and Triebel-Lizorkin spaces, $B^s_{p,q}(\Omega)$ and $F^s_{p,q}(\Omega)$, defined on $\Omega$ by restrictions. All we need can be found in the first chapter of \cite{T08}. We will use the common notations. In particular we put and 
\begin{proposition}\label{prop-spaces-dom}
  Let $\Omega\subset\rn$ be a bounded Lipschitz domain and 
$s_i\in\real$, $0<p_i,q_i\leq\infty$ ($p_i<\infty$ if $A$=$F$), $i=1,2$. Then 
\begin{equation}\label{id_Omega}
  \id_\Omega : \Ae(\Omega) \to \Az(\Omega)
\end{equation}
is compact, if, and only if,
\begin{equation}\label{id_Omega-comp}
s_1-s_2 > \nd\left(\frac{1}{p_1}-\frac{1}{p_2}\right)_+\ .
\end{equation}
\end{proposition}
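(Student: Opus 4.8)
\textbf{Proof plan for Proposition~\ref{prop-spaces-dom}.}
The statement is a classical result, so the plan is to reduce the embedding on a bounded Lipschitz domain to a corresponding statement on $\rn$ or on an affine cube, where one already has the well-developed machinery. First I would recall that a bounded Lipschitz domain $\Omega$ admits a (linear, bounded) universal extension operator $\ext: \A(\Omega) \to \A(\rn)$ with $\re_\Omega \circ \ext = \id$ (Rychkov's construction, or the intrinsic characterisation in \cite{T-F3}), valid simultaneously for all admissible parameters. Composing $\id_\Omega$ with $\ext$ and the restriction $\re_\Omega$ then sandwiches the domain embedding between embeddings of the same spaces on $\rn$; since the latter factor through a fixed bounded ball, one may equivalently work on a large cube $Q\supset\overline\Omega$ and use the periodic (or wavelet) description of $\A(Q)$. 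This is the routine half.

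Second, for the \emph{sufficiency} of \eqref{id_Omega-comp}, I would use the standard two-step argument: (a) an elementary embedding within the same $p$, lowering $q$ or $s$ as in Remark~\ref{A-emb}, to replace the target/source by Besov spaces with a convenient fine index; (b) a Sobolev-type embedding $B^{s_1}_{p_1,q}(\Omega)\hookrightarrow B^{s_2}_{p_2,q}(\Omega)$ which, on a bounded domain, is \emph{compact} precisely when the inequality in \eqref{id_Omega-comp} is strict. Compactness in step (b) is seen by truncating the wavelet (or Fourier-analytic building block) expansion: the finite-level part is a finite-rank operator, and the tail has norm controlled by $2^{-J(s_1-s_2-\nd(1/p_1-1/p_2)_+)}\to 0$, because on a bounded domain each dyadic level contributes only $\sim 2^{J\nd}$ nonzero coefficients, which is exactly the gain that makes the $\ell_{p_1}\hookrightarrow\ell_{p_2}$ mismatch cost only $2^{J\nd(1/p_1-1/p_2)_+}$ rather than being unbounded.

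Third, for the \emph{necessity}, I would argue by contradiction: if $s_1-s_2 \le \nd(1/p_1-1/p_2)_+$, exhibit a bounded but non-precompact sequence in $\Az(\Omega)$ coming from images of normalised atoms. Concretely, take $f_m = 2^{-m s_1} \sum_{k} \lambda_{m,k}\, \psi_{m,k}$ with wavelets $\psi_{m,k}$ at level $m$ supported in $\Omega$; choosing the coefficient vectors so that $\|f_m\,|\,\Ae(\Omega)\|\sim 1$ while the pairwise distances in $\Az(\Omega)$ stay bounded below forces non-compactness. The borderline case $s_1-s_2 = \nd(1/p_1-1/p_2)_+$ needs the sharper observation that the embedding is then still bounded (limiting Sobolev embedding) but not compact, again via a lacunary sequence of single-level wavelet packets.

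The main obstacle I anticipate is organising the endpoint/quasi-Banach bookkeeping cleanly: when $p_i$ or $q_i$ equals $\infty$ (or lies below $1$ in the $F$-scale, which is excluded here but close by), the duality and interpolation shortcuts are unavailable, so both the sufficiency tail estimate and the necessity lacunary construction must be done by hand at the level of sequence-space norms $\ell_q(\beta_j\ell_p^{M_j})$ with $M_j\sim 2^{j\nd}$ (the number of wavelets meeting a bounded domain at level $j$). In fact this is precisely the point where the abstract sequence-space results of Section~\ref{sect-nuc} — or, for compactness, the criterion recalled in Remark~\ref{rem-compact-seq} with $\beta_j = 2^{j(s_1-s_2)}$ and $M_j\sim 2^{j\nd}$ — take over and make the proof essentially a transcription of \eqref{cond-comp-id_beta}; checking that \eqref{cond-comp-id_beta} with these parameters is equivalent to the strict inequality \eqref{id_Omega-comp} is the one genuine computation, and it is short.
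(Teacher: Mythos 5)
The paper offers no proof of Proposition~\ref{prop-spaces-dom}: it is quoted from \cite[Proposition~4.6]{T-F3}. Your route -- extension operator for the Lipschitz domain, reduction via a wavelet isomorphism to the vector-valued sequence spaces $\ell_q(\beta_j\ell_p^{M_j})$ with $M_j\sim 2^{j\nd}$, and then the compactness criterion \eqref{cond-comp-id_beta} of Remark~\ref{rem-compact-seq} -- is the standard one and is exactly the scheme the authors themselves use later for quasi-bounded domains (Step~1 of the proof of Theorem~\ref{nuclear-quasi}, based on Theorem~\ref{wavelet-th}). So the plan is sound and consistent with the paper's methodology.

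There is, however, one concrete error in the parameters you feed into \eqref{cond-comp-id_beta}. The wavelet isomorphism of Theorem~\ref{wavelet-th} identifies $B^{s}_{p,q}(\Omega)$ with $\ell_q\bigl(2^{j(s-\nd/p)}\ell_p^{N_j}\bigr)$, so after normalising the target the embedding corresponds to $\id_\beta$ with
\[
\beta_j=2^{j\delta},\qquad \delta=\Bigl(s_1-\frac{\nd}{p_1}\Bigr)-\Bigl(s_2-\frac{\nd}{p_2}\Bigr),
\]
not $\beta_j=2^{j(s_1-s_2)}$ as you wrote. This matters: with $M_j\sim 2^{j\nd}$ the criterion \eqref{cond-comp-id_beta} reads $2^{-j\delta}2^{j\nd/p^\ast}\to 0$ in $\ell_{q^\ast}$ (resp.\ $c_0$), i.e.\ $\delta>\nd/p^\ast=\nd(\tfrac1{p_2}-\tfrac1{p_1})_+$, which is equivalent to \eqref{id_Omega-comp}; whereas your choice $\beta_j=2^{j(s_1-s_2)}$ would yield the transposed condition $s_1-s_2>\nd(\tfrac1{p_2}-\tfrac1{p_1})_+$, which is not \eqref{id_Omega-comp}. (The paper's own Remark identifying $\bar B^s_{p,q}(\Omega)$ with $\ell_q(2^{js}\ell_p^{2^{j\nd}})$ is an isomorphism of abstract spaces obtained after a $p$-dependent renormalisation of the wavelets; if you use that normalisation in both source and target, the embedding is no longer the identity on coefficients but carries an extra diagonal factor $2^{j\nd(1/p_2-1/p_1)}$, which restores $\delta$.) Once the weight is corrected, your "one genuine computation" does produce \eqref{id_Omega-comp}, and the rest of the outline (extension/restriction sandwich for sufficiency, interior wavelets for necessity, including the borderline case) is fine; note only that in the borderline case with $p_1\le p_2$ the embedding need not even be bounded unless $q_1\le q_2$, but unboundedness of course also rules out compactness.
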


In this paper we shall essentially work with a more  general class of domains, so-called {\em quasi-bounded domains},  that still guarantee the compactness of Sobolev embeddings of the above type. We consider this subject in detail in Section~\ref{sect-fs} below.

\begin{remark}
Note that for $\beta_j= 2^{js}$ and $M_j\sim 2^{jd}$, with $d\in \N$, the above sequence space $\ell_q(\beta_j \ell_p^{M_j})= \ell_q(2^{js}\ell_p^{2^{jd}})$ is  isomorphic to  the subspace $\bar{B}^s_{p,q} (\Omega)$ of the Besov space $B^s_{p,q}(\Omega)$ defined on a bounded Lipschitz domain $\Omega$ in $\R^d$. One can prove it using the wavelet decomposition, {cf. \cite[Sections~4.2.4, 4.2.5]{T08}}. In Section~\ref{sect-fs} we study the more general setting of quasi-bounded domains which require the more general approach of sequence spaces as introduced above.
\end{remark}

%%%%%%%%%%%%%%%%%%%%%%%%5

%\bigskip
%BEGIN NEW

%\bigskip

%%%%%%%%%%%%%%%%%%%5

\section{Nuclear embeddings in general vector-valued sequence spaces}\label{sect-nuc}
Our first main goal in this paper is to study nuclear embeddings between sequence spaces of the type $\ell_q(\beta_j \ell_p^{M_j})$ introduced above. So we first recall some fundamentals of the concept and important results we rely on in the sequel.

\subsection{Basic facts concerning  nuclearity }\label{subsec-conc-nuc}
%{The concept of nuclearity and some recent results}\label{subsec-conc-nuc}
%Let $X,Y$ be Banach spaces, $T\in \mathcal{L}(X,Y)$ a linear and bounded operator. %Then $T$ is called {\em nuclear}, denoted by $T\in\mathcal{N}(X,Y)$, if there exist %elements $a_j\in X'$, the dual space of $X$, and $y_j\in Y$, $j\in\mathbb{N}$, such %that $\sum_{j=1}^\infty \|a_j\|_{X'} \|y_j\|_Y < \infty$ and a nuclear %representation $Tx=\sum_{j=1}^\infty a_j(x) y_j$ for any $x\in X$. Together with %the {\em nuclear norm}
%\[
% \nn{T}=\inf\Big\{ \sum_{j=1}^\infty   \|a_j\|_{X'} \|y_j\|_Y:\ T %=\sum_{j=1}^\infty a_j(\cdot) y_j\Big\},
%  \]
%  where the infimum is taken over all nuclear representations of $T$, the space %$\mathcal{N}(X,Y)$ becomes a Banach space. It is obvious that any nuclear operator %can be approximated by finite rank operators, hence 
%  nuclear operators are, in particular, compact.
{Let $X,Y$ be Banach spaces, $T\in \mathcal{L}(X,Y)$ a linear and bounded operator. We have already recalled  the notion of nuclearity in the Introduction. 
%\begin{remark} 
This concept has been introduced by Grothendieck \cite{grothendieck} and was intensively studied afterwards, cf. \cite{Pie-snumb,Pie-op-2,pie-84} and also \cite{pie-history} for some history. There exist extensions of the concept to $r$-nuclear operators, $0<r<\infty$, where $r=1$ refers to the nuclearity. It is well-known that %$\mathcal{N}(X,Y)$ 
the class  of nuclear operators  possesses the ideal property. In Hilbert spaces $H_1,H_2$, the nuclear operators $\mathcal{N}(H_1,H_2)$ coincide with the trace class $S_1(H_1,H_2)$, consisting of those $T$ with singular numbers $(s_n(T))_n \in \ell_1$.
%|\end{remark}

In the next proposition we recall well-known properties of nuclear operators needed in the sequel, cf. \cite[Chapter 1]{jameson}. }

\begin{proposition}\label{coll-nuc}
\benu[\upshape\bfseries (i)]
\item  If $X$ is an $n$-dimensional Banach space, $n\in\N$, then $\ \nn{\id:X\rightarrow X}= n$.  
\item  For any Banach space $X$ and any bounded linear operator $T:\ell^n_\infty\rightarrow X$ we have 
\[\nn{T} = \sum_{i=1}^n \|Te_i\| .\]
\item  If $T\in \mathcal{L}(X,Y)$ is a nuclear operator and $S\in \mathcal{L}(X_0,X)$ and $R\in \mathcal{L}(Y,Y_0)$, then $RTS$ is a nuclear operator and 
\[ \nn{RTS} \le \|R\| \nn{T} \|S\| \qquad \text{(ideal property)} . \] 
\eenu
\end{proposition}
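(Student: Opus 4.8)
The three assertions are classical (they can be found in \cite[Chapter~1]{jameson}); the plan is to read them straight off the definition of the nuclear norm recalled in the Introduction, handling \upshape (iii)\itshape\ first, then \upshape (ii)\itshape, and extracting the one genuinely non-mechanical point — the lower bound in \upshape (i)\itshape\ — from a trace argument.

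\emph{Proof of \upshape(iii)\itshape.} Start from an arbitrary nuclear representation $T=\sum_{j} a_j(\cdot)\,y_j$ with $\sum_j \|a_j\|_{X'}\|y_j\|_Y<\infty$. Then $RTSx=\sum_j a_j(Sx)\,Ry_j=\sum_j (S'a_j)(x)\,(Ry_j)$ is a nuclear representation of $RTS$; since $\|S'a_j\|_{X_0'}\le\|S\|\,\|a_j\|_{X'}$ and $\|Ry_j\|_{Y_0}\le\|R\|\,\|y_j\|_Y$, summation gives $\sum_j\|S'a_j\|\,\|Ry_j\|\le\|R\|\,\|S\|\sum_j\|a_j\|\,\|y_j\|$. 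In particular $RTS$ is nuclear, and taking the infimum over all nuclear representations of $T$ yields $\nn{RTS}\le\|R\|\,\nn{T}\,\|S\|$.

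\emph{Proof of \upshape(ii)\itshape.} Write $e_1,\dots,e_n$ for the canonical basis of $\ell_\infty^n$ and $e_1',\dots,e_n'$ for the coordinate functionals; recall $(\ell_\infty^n)'=\ell_1^n$, so $\|a\|_{(\ell_\infty^n)'}=\sum_{i=1}^n|a(e_i)|$ for every functional $a$, whence $\|e_i'\|=1$. Since $x=\sum_{i=1}^n e_i'(x)e_i$ for all $x\in\ell_\infty^n$, the finite sum $Tx=\sum_{i=1}^n e_i'(x)\,Te_i$ is a nuclear representation of $T$ (this also shows $T$ is nuclear), so $\nn{T}\le\sum_{i=1}^n\|Te_i\|$. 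Conversely, for any nuclear representation $T=\sum_j a_j(\cdot)y_j$ one has $\|Te_i\|\le\sum_j|a_j(e_i)|\,\|y_j\|$; summing over $i$ and interchanging the order of summation gives $\sum_{i=1}^n\|Te_i\|\le\sum_j\big(\sum_{i=1}^n|a_j(e_i)|\big)\|y_j\|=\sum_j\|a_j\|_{(\ell_\infty^n)'}\|y_j\|$, and the infimum over representations yields the reverse inequality $\sum_{i=1}^n\|Te_i\|\le\nn{T}$.

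\emph{Proof of \upshape(i)\itshape.} For the upper bound I would invoke the existence of an Auerbach basis: there are $x_1,\dots,x_n\in X$ and biorthogonal functionals $x_1',\dots,x_n'\in X'$ with $x_i'(x_j)=\delta_{ij}$ and $\|x_i\|=\|x_i'\|=1$; then $\id=\sum_{i=1}^n x_i'(\cdot)x_i$ is a nuclear representation with $\sum_i\|x_i'\|\,\|x_i\|=n$, so $\nn{\id:X\to X}\le n$. The reverse bound is the main obstacle and is where the trace enters: fix $\varepsilon>0$ and a nuclear representation $\id=\sum_j a_j(\cdot)x_j$ with $\sum_j\|a_j\|\,\|x_j\|\le\nn{\id}+\varepsilon$. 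As $X$ is finite-dimensional, the matrix trace is a well-defined linear functional on $\mathcal{L}(X,X)$ with $\tr\big(a(\cdot)x\big)=a(x)$, and it is continuous for the operator norm; since the partial sums of the representation converge to $\id$ in operator norm, $n=\tr(\id)=\sum_j\tr\big(a_j(\cdot)x_j\big)=\sum_j a_j(x_j)$. Hence $n\le\sum_j|a_j(x_j)|\le\sum_j\|a_j\|\,\|x_j\|\le\nn{\id}+\varepsilon$, and letting $\varepsilon\to0$ gives $\nn{\id:X\to X}\ge n$. Everything apart from this last estimate is pure bookkeeping with the definition; the two ingredients that do real work are the existence of an Auerbach basis and the identity $\tr(a(\cdot)x)=a(x)$ together with the elementary bound $|\tr T|\le\nn{T}$ on finite-rank operators.
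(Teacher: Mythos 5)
Your proof is correct. The paper does not prove Proposition~\ref{coll-nuc} at all --- it only cites \cite[Chapter~1]{jameson} --- and your arguments (the composition of representations for (iii), the $\ell_1^n$-duality of $(\ell_\infty^n)'$ for (ii), and the Auerbach basis plus the trace bound $|\tr T|\le\nu(T)$ for (i)) are exactly the classical ones from that reference; the trace-duality idea you use for the lower bound in (i) is in fact the same mechanism the authors later exploit in the proof of Lemma~\ref{nucD}.
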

  
Already in the early years there was a strong interest to find {natural} examples of nuclear operators beyond diagonal operators in $\ell_p$ spaces, for which a complete answer was obtained in \cite{tong}. Let $\tau=(\tau_j)_{j\in\nat}$ be a scalar sequence and denote by $D_\tau$ the corresponding diagonal operator, $D_\tau: x=(x_j)_j \mapsto (\tau_j x_j)_j$, acting between $\ell_p$ spaces. Let us introduce the following notation: for  $r_1,r_2\in [1,\infty]$, let $\tn(r_1,r_2)$ be given by 
\begin{equation}\label{tongnumber}
\frac{1}{\tn(r_1,r_2)} = \begin{cases}
    1, & \text{if}\ 1\leq r_2\leq r_1\leq \infty, \\
    1-\frac{1}{r_1}+\frac{1}{r_2}, & \text{if}\ 1\leq r_1\leq r_2\leq \infty.
  \end{cases}
\end{equation}
Hence $1\leq \tn(r_1,r_2)\leq \infty$, and 
\[ \frac{1}{\tn(r_1,r_2)}= 1-\left(\frac{1}{r_1}-\frac{1}{r_2}\right)_+ \geq \frac{1}{r^\ast}= \left(\frac{1}{r_2}-\frac{1}{r_1}\right)_+\ ,\]
with $\tn(r_1,r_2)=r^\ast $ if, and only if, $\{r_1,r_2\}=\{1,\infty\}$.

We heavily rely in our arguments below on the following remarkable result by Tong \cite{tong}.

\begin{proposition}[{\cite[Thms.~4.3, 4.4]{tong}}]\label{prop-tong}
  Let $1\leq r_1,r_2\leq\infty$ and $D_\tau$ be the above diagonal operator.
\benu[\bfseries\upshape (i)]
\item
  Then $D_\tau$ is nuclear if, and only if, $\tau=(\tau_j)_j \in \ell_{\tn(r_1,r_2)}$, with $\ell_{\tn(r_1,r_2)}= c_0$ if $\tn(r_1,r_2)=\infty$. Moreover,
  \[
  \nn{D_\tau:\ell_{r_1}\to\ell_{r_2}} = \|\tau|{\ell_{\tn(r_1,r_2)}}\|.
  \]
\item
  Let $n\in\nat$ and $D^n_\tau: \ell^n_{r_1}\to \ell^n_{r_2}$ be the corresponding diagonal operator $D_\tau^n: x=(x_j)_{j=1}^n \mapsto (\tau_j x_j)_{j=1}^n$. Then 
\begin{equation}\label{tong-res}
\nn{D_\tau^n:\ell_{r_1}^n\rightarrow \ell^n_{r_2}} = \left\| (\tau_j)_{j=1}^n | {\ell_{\tn(r_1,r_2)}^n} \right\|.
\end{equation}
\eenu
\end{proposition}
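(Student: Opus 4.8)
The plan is to follow Tong's scheme and reduce part~(i) to the finite-dimensional identity~(ii), which is the heart of the matter; for~(ii) I would establish a matching lower and upper bound for $\nn{D_\tau^n:\ell_{r_1}^n\to\ell_{r_2}^n}$. The only tools needed are Proposition~\ref{coll-nuc} (the ideal property, and the fact $\nn{\id:X\to X}=\dim X$), the elementary inequality $|\tr W|\le\nn{W}$ for an endomorphism $W$ of a finite-dimensional space, H\"older's inequality, and the identity $\frac{1}{\tn(r_1,r_2)}=1-\left(\frac{1}{r_1}-\frac{1}{r_2}\right)_+$ recorded above. Put $\frac{1}{u}:=\left(\frac{1}{r_1}-\frac{1}{r_2}\right)_+$, so that $u'=\tn(r_1,r_2)$, and note the elementary norm identities, all immediate from H\"older (with the obvious reading in the extreme cases): $\|D_\psi^n:\ell_{r_2}^n\to\ell_{r_1}^n\|=\|\psi|\ell_u^n\|$, $\|D_\psi^n:\ell_{r_1}^n\to\ell_1^n\|=\|\psi|\ell_{r_1'}^n\|$ and $\|D_\psi^n:\ell_\infty^n\to\ell_{r_2}^n\|=\|\psi|\ell_{r_2}^n\|$.

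For the \emph{lower bound} I would use a trace argument. For any scalar vector $\psi=(\psi_j)_{j=1}^n$, the composition $D_\psi^n\circ D_\tau^n$ is the diagonal endomorphism of $\ell_{r_1}^n$ with entries $(\tau_j\psi_j)_{j}$, whence $\tr(D_\psi^n D_\tau^n)=\sum_{j=1}^n\tau_j\psi_j$. Then $\left|\sum_{j}\tau_j\psi_j\right|=|\tr(D_\psi^n D_\tau^n)|\le\nn{D_\psi^n D_\tau^n}$, which by the ideal property is at most $\|D_\psi^n:\ell_{r_2}^n\to\ell_{r_1}^n\|\cdot\nn{D_\tau^n}=\|\psi|\ell_u^n\|\cdot\nn{D_\tau^n}$. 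Taking the supremum over $\|\psi|\ell_u^n\|\le1$ and using H\"older duality ($u'=\tn(r_1,r_2)$) gives $\nn{D_\tau^n}\ge\|(\tau_j)_{j=1}^n|\ell_{\tn(r_1,r_2)}^n\|$.

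For the \emph{upper bound} I would distinguish two cases. If $r_2\le r_1$, then $\tn(r_1,r_2)=1$, and the obvious representation $D_\tau^n x=\sum_{j=1}^n\tau_j x_j\,e_j$ --- the $j$-th coordinate functional has norm $1$ in $(\ell_{r_1}^n)'$ and $\|e_j|\ell_{r_2}^n\|=1$ --- gives $\nn{D_\tau^n}\le\sum_{j}|\tau_j|=\|(\tau_j)_{j=1}^n|\ell_1^n\|$, which matches the lower bound. If $r_1<r_2$, I would instead factor $D_\tau^n=D_\beta^n\circ\iota_n\circ D_\alpha^n$, where $\iota_n:=\id\colon\ell_1^n\to\ell_\infty^n$, $D_\alpha^n\colon\ell_{r_1}^n\to\ell_1^n$ and $D_\beta^n\colon\ell_\infty^n\to\ell_{r_2}^n$, choosing the splitting $\alpha_j\beta_j=\tau_j$ with $|\alpha_j|=|\tau_j|^{\theta}$ and $\theta$ determined by $\theta\,r_1'=\tn(r_1,r_2)=(1-\theta)\,r_2$; this is consistent precisely because $\frac{1}{r_1'}+\frac{1}{r_2}=\frac{1}{\tn(r_1,r_2)}$, and in the degenerate cases $r_1=1$ or $r_2=\infty$ one (or both) of the outer factors collapses to a scaling, without affecting the argument. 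By the ideal property, $\nn{D_\tau^n}\le\|D_\alpha^n:\ell_{r_1}^n\to\ell_1^n\|\cdot\nn{\iota_n}\cdot\|D_\beta^n:\ell_\infty^n\to\ell_{r_2}^n\|=\|\alpha|\ell_{r_1'}^n\|\cdot\nn{\iota_n}\cdot\|\beta|\ell_{r_2}^n\|$, and the choice of $\theta$ makes the product of the two diagonal norms collapse, via H\"older, to exactly $\|(\tau_j)_{j=1}^n|\ell_{\tn(r_1,r_2)}^n\|$. The one remaining ingredient is $\nn{\iota_n:\ell_1^n\to\ell_\infty^n}\le1$, which I would read off from the sign-averaging representation $\iota_n=2^{-n}\sum_{\varepsilon\in\{\pm1\}^n}\langle\cdot,\varepsilon\rangle\,\varepsilon$: each of the $2^n$ rank-one terms has norm at most $1$ since $\|\varepsilon|(\ell_1^n)'\|=\|\varepsilon|\ell_\infty^n\|=1$, and $2^{-n}\sum_{\varepsilon}\varepsilon_i\varepsilon_j=\delta_{ij}$. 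Combining the two bounds proves~(ii), with the exact value of the nuclear norm. I expect this mixed-exponent upper bound to be the only genuine obstacle: the naive route --- splitting $D_\tau$ along the dyadic level sets of $(|\tau_j|)_j$, estimating each block optimally, and summing --- fails, because subadditivity of $\nn{\cdot}$ reassembles the blocks in an $\ell_1$-sum over the levels whereas the target is an $\ell_{\tn(r_1,r_2)}$-sum, a true loss of a power of the dimension when $\tn(r_1,r_2)>1$; one must therefore not split the index set, and use a single factorization as above.

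Finally, to deduce~(i) from~(ii): truncation to the first $n$ coordinates is $D_\tau^n=R_n\,D_\tau\,I_n$, with $R_n:\ell_{r_2}\to\ell_{r_2}^n$ the coordinate projection and $I_n:\ell_{r_1}^n\to\ell_{r_1}$ the coordinate inclusion, both of norm $1$, so $\nn{D_\tau^n}\le\nn{D_\tau:\ell_{r_1}\to\ell_{r_2}}$. Thus, if $D_\tau$ is nuclear, then $\|(\tau_j)_{j=1}^n|\ell_{\tn(r_1,r_2)}^n\|\le\nn{D_\tau}$ for every $n$, forcing $(\tau_j)_j\in\ell_{\tn(r_1,r_2)}$ with $\|(\tau_j)_j|\ell_{\tn(r_1,r_2)}\|\le\nn{D_\tau}$; and when $\tn(r_1,r_2)=\infty$, i.e.\ when $\{r_1,r_2\}=\{1,\infty\}$, nuclearity implies compactness, and since $(e_j)_j$ lies in the unit ball of $\ell_{r_1}$ with $D_\tau e_j=\tau_j e_j$, relative compactness of $\{\tau_j e_j\}_j$ in $\ell_{r_2}$ forces $|\tau_j|\to0$, i.e.\ $(\tau_j)_j\in c_0$. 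Conversely, if $(\tau_j)_j\in\ell_{\tn(r_1,r_2)}$ (respectively $c_0$), the truncations $D_{\tau^{(n)}}$ are finite-rank, hence nuclear, and applying~(ii) to the finite coordinate block $\{n+1,\dots,m\}$ --- using that for a diagonal operator supported on finitely many coordinates the nuclear norm equals that of the associated finite-dimensional operator --- gives $\nn{D_{\tau^{(m)}}-D_{\tau^{(n)}}}=\|(\tau_j)_{n<j\le m}|\ell_{\tn(r_1,r_2)}\|\to0$ as $n\to\infty$; by completeness of $\mathcal{N}(\ell_{r_1},\ell_{r_2})$ it follows that $D_\tau\in\mathcal{N}(\ell_{r_1},\ell_{r_2})$ with $\nn{D_\tau}=\lim_n\|(\tau_j)_{j=1}^n|\ell_{\tn(r_1,r_2)}^n\|=\|(\tau_j)_j|\ell_{\tn(r_1,r_2)}\|$.
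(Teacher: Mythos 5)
Your proof is correct and complete. Note, however, that the paper does not prove this proposition at all: it is quoted verbatim from Tong's paper, and the authors' own work begins only with the vector-valued generalisation (Lemma~\ref{nucD} and Proposition~\ref{main-seq-nuc}). Measured against that generalisation, your lower bound is essentially the same trace argument the paper uses --- $|\tr(D_\psi^n D_\tau^n)|\le \|D_\psi^n\|\,\nu(D_\tau^n)$ followed by H\"older duality --- except that you only need the elementary inequality $|\tr W|\le\nu(W)$ together with the ideal property, whereas the paper's Step~1 establishes the full isometric identification $\mathcal{N}(X,Y)'\cong\mathcal{L}(Y,X)$ and extracts \emph{both} bounds from the formula $\nu(D)=\sup_{\|S\|\le1}|\tr(SD)|$ plus the sharpness of H\"older's inequality. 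Your upper bound is genuinely different and more constructive: the factorisation $D_\tau^n=D_\beta^n\circ\iota_n\circ D_\alpha^n$ through $\id:\ell_1^n\to\ell_\infty^n$, with the exponent split $\theta r_1'=(1-\theta)r_2=\tn(r_1,r_2)$ and the Rademacher sign-averaging representation showing $\nu(\iota_n)\le1$, produces an explicit near-optimal nuclear representation rather than deducing the bound by duality. The computation checks out (the consistency condition $\tfrac1{r_1'}+\tfrac1{r_2}=\tfrac1{\tn(r_1,r_2)}$ is exactly the definition of $\tn$ when $r_1\le r_2$, and the degenerate cases $r_1=1$ or $r_2=\infty$ collapse as you say), the reduction of (i) to (ii) by truncation, completeness of $\mathcal{N}$, and the $c_0$ argument in the case $\{r_1,r_2\}=\{1,\infty\}$ all mirror what the paper does in Proposition~\ref{main-seq-nuc}, and your observation that a dyadic level-set splitting would lose a power of the dimension correctly identifies why the single factorisation is unavoidable. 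What your route buys is self-containedness (no trace duality between $\mathcal{N}$ and $\mathcal{L}$ needed for the upper bound); what the paper's route buys is that the duality formula transfers with no extra effort to the mixed-norm spaces $\mathfrak{l}^n_{q}(\mathfrak{l}^{M_j}_{p})$, where an analogue of your clean two-step factorisation is harder to arrange for general parameter constellations (cf.\ the restriction $q_1\le p_1\le p_2\le q_2$ in the paper's alternative factorisation argument after Theorem~\ref{nucl-seq-sp}).
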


\begin{example}
In the special case of $\tau\equiv 1$, i.e., $D_\tau=\id$, (i) is not applicable and (ii) reads as  
  \[
\nn{\id :\ell_{r_1}^n\rightarrow \ell^n_{r_2}} =
\begin{cases} 
n & \text{if}\qquad 1\le r_2\le r_1\le \infty,\\
n^{1-\frac{1}{r_1}+\frac{1}{r_2}} & \text{if}\qquad 1\le r_1\le r_2\le \infty .
\end{cases}
\]
In particular, $\nn{\id:\ell_1^n\rightarrow \ell^n_\infty}=1$. 
\end{example}

\begin{remark}
We refer also to \cite{Pie-op-2} for the case $r_1=1$, $r_2=\infty$. 
  \end{remark}

\subsection{Nuclearity results for general vector-valued sequence spaces}

Our aim now is to prove some `nuclear' counterpart of the compactness result recalled in Remark~\ref{rem-compact-seq}.  
%, Proposition~\ref{compact-seq}. 
Roughly speaking, this will read as follows. Assume that $1\leq p_i, q_i\leq\infty$, $i=1,2$. Then $\id_\beta$ given by \eqref{id_beta} is nuclear if, and only if, the compactness condition \eqref{cond-comp-id_beta} is replaced by

%\begin{proposition}\label{nucl-seq-sp}
%  Let $1\leq p_1,p_2\leq\infty$, $1\leq q_1,q_2\leq\infty$, $\{\beta_j\}_{j\in\no}$ be an arbitrary weight sequence and $\{M_j\}_{j\in\no}$ be a sequence of natural numbers. Then the embedding
%  \begin{equation}
%\id_\beta : \ell_{q_1}\left(\beta_j \ell_{p_1}^{M_j}\right) \rightarrow 
%\ell_{q_2}\left(\ell_{p_2}^{M_j}\right)
%  \end{equation}
%  is nuclear if, and only if,
  \begin{align}
    \left(\beta_j^{-1} M_j^{\frac{1}{\tn(p_1,p_2)}}\right)_{j\in\no} \in \ell_{\tn(q_1,q_2)},
        \end{align}
where for $\tn(q_1,q_2)=\infty$ the space $\ell_\infty$ has to be replaced by $c_0$.
%\end{proposition}

But we need some preparation to prove (an even more general version of) this result and postpone it as Theorem~\ref{nucl-seq-sp} below, together with some further discussion. In our argument below we rely on the approach in Tong's paper \cite{tong} and adapt and extend it appropriately. 

%    \subsection{ Idea of the proof of Proposition \ref{nucl-seq-sp}}
    
\smallskip~

   We consider the finite-dimensional version of the spaces introduced in Definition   \ref{seq-sp-def1}. For $n\in \No$, {$1\leq p,q\leq\infty$}, we put   
    \begin{align} \label{seq-sp-def2}
    \ell_q^{ n} \left(\beta_j \ell_p^{M_j}\right) =  \Big\{ & x=(x_{j,k})_{0\le j\le n, k=1, \dots, M_j}:  \ {x_{j,k}\in\C},  \\ \nonumber
& \left\|x|\ell_q\left(\beta_j \ell_p^{M_j}\right)\right\| = \Big(\sum_{j=0}^n\beta_j^q \big(\sum_{k=1}^{M_j} |x_{j,k}|^p\Big)^{\frac{q}{p}}\Big)^{\frac1q}
    \Big\},
  \end{align}
{appropriately modified for $p=\infty$ and/or $q=\infty$}. 
    Following the ideas of the proof in  \cite{tong}, we  are interested in embeddings of these spaces or, equivalently, in actions of  diagonal operators on the spaces with $\beta_j\equiv 1$.  More generally we will work with  operators acting on  spaces given by  matrices. Therefore it will be convenient  to rewrite the above definition  in the following way. Let $M=(M_j)_{j\in\No}$  be a sequence of natural numbers and let $n\in \N_0$. Denote by {$\alpha_j=\sum_{l=0}^{j-1} M_l$, $j\in\no$, with $M_{-1}:=0$, i.e., $\alpha_0=0$. Let  
   $I_j=\{\alpha_j+1, \ldots , \alpha_{j+1}\}$, that is, $I_0=\{1, \dots, M_0\}$, and  $N=\alpha_{n+1}$}. We put 
%\[ {\frak l}_q({\frak l}^{M_j}_p)\qquad {\frak l}^n_q({\frak l}^{M_j}_p)\]
\begin{align} 
\label{seq-sp-def2a}
    % X_{p,q}^{M}  
{    {\mathfrak l}_q({\mathfrak l}^{M_j}_p)}= & \left\{x=(x_{k})_{k\in  \N} :  {x_k\in\C},
 \left\|x|{\mathfrak l}_q({\mathfrak l}^{M_j}_p)\right\| = \Big(\sum_{j=0}^\infty \big(\sum_{k\in I_j} |x_{k}|^p\Big)^{\frac{q}{p}}\Big)^{\frac1q}<\infty
    \right\},\\
    \label{seq-sp-def2b}
    %X_{p,q}^{n, M}  
{  {\mathfrak l}^n_q({\mathfrak l}^{M_j}_p) }= & \left\{x=(x_{k})_{1\le k\le N} :  {x_k\in\C},
 \left\|x| {\mathfrak l}^n_q({\mathfrak l}^{M_j}_p)\right\| = \Big(\sum_{j=0}^n \big(\sum_{k\in I_j} |x_{k}|^p\Big)^{\frac{q}{p}}\Big)^{\frac1q}
    \right\}, 
\end{align}
{appropriately modified for $p=\infty$ and/or $q=\infty$}. 
 Then $\ds {\mathfrak l}_q({\mathfrak l}^{M_j}_p)$ is a vector space isometrically isomorphic to $\ell_{q}( \ell_{p}^{M_j})$ and $\ds {\mathfrak l}^n_q({\mathfrak l}^{M_j}_p)$ is a finite-dimensional vector space isometrically isomorphic to $\ds\ell_{q}^n( \ell_{p}^{M_j})$, with $\ds\dim  {\mathfrak l}^n_q({\mathfrak l}^{M_j}_p)= N$. 
 
 \begin{lemma}\label{oper-norm}
Let $n\in\No$, $1\leq p_i,q_i\leq\infty$, $i=1,2$, and $M=(M_j)_{j\in\No}$ and $N$ as above. Let $\lambda = (\lambda_k)_{k=1, \dots, N}$ be complex numbers and 
%$D_\lambda: X_{p_1,q_1}^{n, M} \rightarrow  X_{p_2,q_2}^{n, M}$
$D_\lambda: { \ds {\mathfrak l}^n_{q_1}({\mathfrak l}^{M_j}_{p_1}) }
%X_{p_1,q_1}^{n, M}
 \rightarrow  { \ds {\mathfrak l}^n_{q_2}({\mathfrak l}^{M_j}_{p_2}) }
 %X_{p_2,q_2}^{n, M}
 $ be the diagonal operator given by $D_\lambda:(x_k)_{k=1}^N\mapsto (\lambda_k x_k)_{k=1}^N$. Then 
\begin{align*} 
\| D_\lambda: {\mathfrak l}^n_{q_1}({\mathfrak l}^{M_j}_{p_1}) \to {\mathfrak l}^n_{q_2}({\mathfrak l}^{M_j}_{p_2})\| = \|\lambda| {  {\mathfrak l}^n_{q^*}({\mathfrak l}^{M_j}_{p^*}) }\|, %X_{p^*,q^*}^{n, M} \|,
\quad \lambda = (\lambda_k)_{k=1}^N,
\end{align*}
 where $p^*$ and $q^*$ are given by \eqref{pq-star}.
 \end{lemma}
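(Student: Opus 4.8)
The statement is the computation of the operator norm of a diagonal operator between two finite-dimensional vector-valued sequence spaces, and the natural strategy is to reduce it to the scalar blockwise situation and then to the two classical building blocks: the $\ell_p^{M}\to\ell_{p'}^{M}$ case and the $\ell_q\to\ell_q$ case with block weights. First I would observe that the operator $D_\lambda$ decouples over the index blocks $I_j$: for each fixed $j$, $D_\lambda$ restricted to the $j$-th block is the diagonal operator $(x_k)_{k\in I_j}\mapsto(\lambda_k x_k)_{k\in I_j}$ from $\ell_{p_1}^{M_j}$ to $\ell_{p_2}^{M_j}$. Writing $\mu_j := \|(\lambda_k)_{k\in I_j}\,|\,\ell_{p^*}^{M_j}\|$ for the inner quantities, the claim is equivalent to showing that the norm of $D_\lambda$ equals $\|(\mu_j)_{j=0}^n\,|\,\ell_{q^*}^{n}\|$, i.e. that the outer behaviour is governed by an $\ell_{q_1}^n\to\ell_{q_2}^n$ diagonal operator with the diagonal $(\mu_j)_j$.

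The two ingredients I would invoke are: (a) the elementary fact that the norm of a diagonal operator $D_\nu:\ell_{p_1}^{M}\to\ell_{p_2}^{M}$ equals $\|\nu\,|\,\ell_{p^*}^{M}\|$ where $1/p^* = (1/p_2-1/p_1)_+$ — this is classical and follows from Hölder's inequality for the upper bound (splitting into the cases $p_1\le p_2$, where one uses that $\|D_\nu x\|_{p_2}\le \|\nu\|_{p^*}\|x\|_{p_1}$ via Hölder with exponents making $1/p_2 = 1/p^* + 1/p_1$, together with the monotonicity $\|x\|_{p_2}\le\|x\|_{p_1}$ in the remaining sub-case $p^*=\infty$) and is sharp by testing on a single unit vector (when $p^*=\infty$) or on the vector $x_k = |\nu_k|^{p^*/p_1}\cdot(\text{sign})$ suitably normalized (when $p^*<\infty$); and (b) the same statement one level up, for the outer $\ell_q$ structure, which gives $\|D_\mu:\ell_{q_1}^n\to\ell_{q_2}^n\| = \|\mu\,|\,\ell_{q^*}^n\|$. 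Combining (a) applied blockwise with (b) applied to the resulting block norms $(\mu_j)_j$ yields the upper bound $\|D_\lambda\|\le\|\lambda\,|\,{\mathfrak l}^n_{q^*}({\mathfrak l}^{M_j}_{p^*})\|$.

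For the lower bound (sharpness) I would construct a near-extremal vector $x$ explicitly: within each block $I_j$ pick the extremiser $x^{(j)}$ for the inner diagonal problem (so that $\|D_\lambda x^{(j)}\,|\,\ell_{p_2}^{M_j}\| = \mu_j\,\|x^{(j)}\,|\,\ell_{p_1}^{M_j}\|$, with the standard caveat that when $p^*=\infty$ one takes $x^{(j)}$ concentrated at the index where $|\lambda_k|$ is maximal, and when $q^*=\infty$ one takes $x$ concentrated on the single block $j$ maximising $\mu_j$), then weight the blocks against each other using the extremiser for the outer $\ell_{q_1}\to\ell_{q_2}$ diagonal problem with diagonal $(\mu_j)_j$. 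Normalising, this test element shows $\|D_\lambda\|\ge\|\lambda\,|\,{\mathfrak l}^n_{q^*}({\mathfrak l}^{M_j}_{p^*})\|$, completing the proof. The only genuinely delicate points are bookkeeping the various degenerate cases $p^*=\infty$ (i.e. $p_1\le p_2$) and $q^*=\infty$ (i.e. $q_1\le q_2$), and, in those cases, the use of the finite dimension $M_j<\infty$, $n<\infty$ to guarantee that the suprema defining $\ell_\infty^{M_j}$- and $\ell_\infty^n$-norms are attained so that genuine (not merely approximate) extremisers exist; I expect this case analysis, rather than any single hard estimate, to be the main obstacle.
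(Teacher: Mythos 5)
Your proposal is correct and follows essentially the same route as the paper's proof: H\"older's inequality (together with monotonicity of the $\ell_r$-norms in the cases $p_1\le p_2$, $q_1\le q_2$) for the upper bound, and explicit extremising sequences realising equality in H\"older for the lower bound; the paper merely states this in two sentences and defers details to a reference, whereas you organise it as a blockwise (inner $\ell_p^{M_j}$, then outer $\ell_q^n$) reduction. Your attention to the degenerate cases $p^*=\infty$, $q^*=\infty$ and to the existence of genuine extremisers in finite dimensions is exactly the bookkeeping the paper leaves implicit.
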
 

%%%%%%%%%%%%%%%%%%%%%%%%%%%%%%%%%%%%%%%%%%%%%%%%%%%%%%%%%%%%%%%%%%%%%
 \begin{proof}
   The upper estimate of $\| D_\lambda\| = \| D_\lambda: \ds {\mathfrak l}^n_{q_1}({\mathfrak l}^{M_j}_{p_1}) \to {\mathfrak l}^n_{q_2}({\mathfrak l}^{M_j}_{p_2})\|$, that is, 
     $\| D_\lambda\| \le \|\lambda| {\ds{\mathfrak l}^n_{q^*}({\mathfrak l}^{M_j}_{p^*}) } \|$, follows easily from H\"older's inequality if $p_2<p_1$ or $q_2<q_1$, and from the monotonicity of the $\ell_r$ norms otherwise. To prove the opposite inequality one can find  sequences that correspond to the equalities in the H\"older inequalities. For details we refer to  \cite{BZB} where a similar, even more general statement is proved.
\end{proof}
%%%%%%%%%%%%%%%%%%%5

% \begin{remark}
%A more general statement, similar to the above lemma, can be found in \cite{BZB}. %We have given here the above argument for completeness. 
%\end{remark}
%%%%%%%%%%%%%%%%%%%%%%%%%%%%%%%%%%%%%%%%%%%%%%%%%%%%%%%%%%%%%%%%%%%%%%%%%

Let $A$ be an $N\times N$ complex  matrix. Let $\mathcal{D}(A)$ denote the diagonal part of $A$, i.e., a matrix  that derives  from $A$ by  replacing  all off-diagonal entries of $A$ by zeros. Analogously, if $T$ is a linear operator on  ${\mathbb C}^N$  given by the matrix $A$, then $\mathcal{D}(T)$ will denote the operator given by the matrix $\mathcal{D}(A)$. 

\begin{lemma}\label{diagnorm}
Let $n\in\No$, $1\leq p_i,q_i\leq\infty$, $i=1,2$, and $M=(M_j)_{j\in\No}$ as above. If %$T: X_{p_1,q_1}^{n, M} \rightarrow X_{p_2,q_2}^{n, M} $ 
$\ds T: {\mathfrak l}^n_{q_1}({\mathfrak l}^{M_j}_{p_1}) \rightarrow    {\mathfrak l}^n_{q_2}({\mathfrak l}^{M_j}_{p_2}) $ 
is a linear operator, then 
\[ \| \mathcal{D}(T)\|\le \|T\| .\] 
\end{lemma}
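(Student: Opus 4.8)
The plan is to reduce the claim to the extremal case of diagonal operators mapping $\ell_\infty$ into $\ell_1$, where Proposition~\ref{coll-nuc}(ii) gives an exact formula for the nuclear norm, and then to exploit the ideal property together with the norm formula of Lemma~\ref{oper-norm}. The key observation is that the operator norm of a diagonal operator is \emph{largest} precisely when the source space has the largest possible unit ball and the target the smallest, i.e.\ when $q_1=p_1=\infty$ and $q_2=p_2=1$; in that extremal case the source space ${\mathfrak l}^n_\infty({\mathfrak l}^{M_j}_\infty)$ is isometrically $\ell_\infty^N$ and the target ${\mathfrak l}^n_1({\mathfrak l}^{M_j}_1)$ is isometrically $\ell_1^N$.

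First I would factor the generic diagonal extraction through this extremal situation. Denote by $J_1:{\mathfrak l}^n_{q_1}({\mathfrak l}^{M_j}_{p_1})\hookrightarrow \ell_\infty^N$ and $J_2:\ell_1^N\hookrightarrow {\mathfrak l}^n_{q_2}({\mathfrak l}^{M_j}_{p_2})$ the natural identity maps; since $1\le q_i,p_i\le\infty$, the monotonicity of $\ell_r$-norms on a finite index set gives $\|J_1\|=\|J_2\|=1$ (and likewise for the inverse identities going the other way, which also have norm $1$ because the index sets are finite — this is the standard ``on $\C^N$ all $\ell_r$ are comparable'' fact, but here even with constant $1$ in the needed direction). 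Given $T:{\mathfrak l}^n_{q_1}({\mathfrak l}^{M_j}_{p_1})\to{\mathfrak l}^n_{q_2}({\mathfrak l}^{M_j}_{p_2})$, write its matrix $A$ in the standard basis and let $\widetilde T:\ell_\infty^N\to\ell_1^N$ be the operator with the \emph{same} matrix $A$, so that $\widetilde T = J_2^{-1}\, T\, J_1^{-1}$ as matrix identities on $\C^N$ — more precisely $T = J_2\,\widetilde T\,J_1$ — hence $\|\widetilde T : \ell_\infty^N\to\ell_1^N\|\le\|T\|$, because each inclusion contributes a factor $\le 1$. Crucially $\mathcal{D}(\widetilde T)$ has matrix $\mathcal{D}(A)$, the same as $\mathcal{D}(T)$.

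Next I would compute. By Proposition~\ref{coll-nuc}(ii) applied to $\widetilde T$ (which has domain $\ell_\infty^N$), $\nu(\widetilde T:\ell_\infty^N\to\ell_1^N)=\sum_{k=1}^N\|\widetilde T e_k|\ell_1^N\|$, and since $\widetilde T e_k$ is the $k$-th column of $A$ this equals $\sum_{k=1}^N\sum_{m=1}^N|a_{m,k}|\ge\sum_{k=1}^N|a_{k,k}|=\|\mathcal{D}(A)|\ell_1^{N}\|$, the last sum being the $\ell_1$-norm of the diagonal entries. On the other hand, by Lemma~\ref{oper-norm} in the extremal case $p_1=q_1=\infty$, $p_2=q_2=1$ we have $p^\ast=q^\ast=\infty$, so $\|\mathcal{D}(\widetilde T):\ell_\infty^N\to\ell_1^N\| = \|(a_{k,k})_{k=1}^N | {\mathfrak l}^n_\infty({\mathfrak l}^{M_j}_\infty)\| = \max_{k}|a_{k,k}|\le\sum_k|a_{k,k}|$; but that is too weak. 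Instead I would chain inequalities through nuclearity directly: by the ideal property and $\|J_1\|=\|J_2\|=1$, $\|\mathcal{D}(T)\| = \|J_2\,\mathcal{D}(\widetilde T)\,J_1\|\le\|\mathcal{D}(\widetilde T):\ell_\infty^N\to\ell_1^N\|$, and since $\mathcal{D}(\widetilde T)$ is a diagonal operator on $\ell_\infty^N$ its operator norm and its nuclear norm agree up to the easy bound $\|\mathcal{D}(\widetilde T)\|\le\nu(\mathcal{D}(\widetilde T))\le\nu(\widetilde T)\le$ — no: the right route is to observe $\|\mathcal{D}(T):{\mathfrak l}^n_{q_1}\to{\mathfrak l}^n_{q_2}\|\le\nu(\mathcal{D}(T)) \le \nu(\mathcal{D}(\widetilde T):\ell_\infty^N\to\ell_1^N)\le\nu(\widetilde T:\ell_\infty^N\to\ell_1^N)$ — wait, the middle step needs $\mathcal{D}$ to not increase nuclear norm on $\ell_\infty^N\to\ell_1^N$, which is itself exactly Proposition~\ref{coll-nuc}(ii): $\nu(\mathcal{D}(\widetilde T))=\sum_k|a_{k,k}|\le\sum_{k,m}|a_{m,k}|=\nu(\widetilde T)$. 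Finally $\nu(\widetilde T:\ell_\infty^N\to\ell_1^N) = \nu(J_2^{-1}TJ_1^{-1})\le\|J_1^{-1}\|\,\nu(T)\,$ — this fails since $T$ need not be nuclear.

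Let me restate the clean version: since $\|\mathcal{D}(T):{\mathfrak l}^n_{q_1}({\mathfrak l}^{M_j}_{p_1})\to{\mathfrak l}^n_{q_2}({\mathfrak l}^{M_j}_{p_2})\| = \|(a_{k,k})_k | {\mathfrak l}^n_{q^\ast}({\mathfrak l}^{M_j}_{p^\ast})\|$ by Lemma~\ref{oper-norm}, and every $\ell_r$-norm on the finite set $\{1,\dots,N\}$ is dominated by the $\ell_1$-norm, we get $\|\mathcal{D}(T)\|\le\sum_{k=1}^N|a_{k,k}|\le\sum_{k=1}^N\sum_{m=1}^N|a_{m,k}| = \nu(\widetilde T:\ell_\infty^N\to\ell_1^N)\le\|\widetilde T:\ell_\infty^N\to\ell_1^N\|$? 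No — $\nu\ge\|\cdot\|$, not $\le$. The genuinely correct final link is the simplest one: $\nu(\widetilde T:\ell_\infty^N\to\ell_1^N)\le\nu(T)$ is false, but $\|\widetilde T:\ell_\infty^N\to\ell_1^N\|\le\|T\|$ is true (shown above), \emph{and} for an operator $\ell_\infty^N\to\ell_1^N$ one has $\sum_k|a_{k,k}|\le\sum_{k,m}|a_{m,k}| \le N\cdot\|\widetilde T:\ell_\infty^N\to\ell_1^N\|$ which loses a factor $N$. So the honest argument must avoid routing the \emph{off}-diagonal mass through $\ell_1$ and instead test $\mathcal{D}(T)$ directly: pick the extremal vector $x^\ast$ with $|x^\ast_k|=1$ and $x^\ast_k\overline{\lambda_k}=|\lambda_k|$ realizing $\|\mathcal{D}(T)\|$ in the $\ell_\infty\to\ell_1$ block, observe $x^\ast$ is admissible for $T$ with $\|x^\ast|{\mathfrak l}^n_{q_1}({\mathfrak l}^{M_j}_{p_1})\|\le\|x^\ast|\ell_\infty^N\|=1$, evaluate $\|Tx^\ast|{\mathfrak l}^n_{q_2}\|\ge$ (lower $\ell_r$-norms) $\ge$ the $\ell_\infty$-norm of $Tx^\ast$, which need not see the diagonal — \emph{that} is the real obstacle. \textbf{The hard part} is exactly this: off-diagonal entries can conspire to cancel the diagonal contribution in any single test vector, so one cannot extract $\|\mathcal{D}(T)\|$ from one evaluation. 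The standard fix, which I would implement, is an \emph{averaging / signs argument}: replace $T$ by $U_\varepsilon T U_\delta$ where $U_\varepsilon,U_\delta$ are diagonal sign (or random $\pm1$, or Steinhaus) operators — these are isometries of every ${\mathfrak l}^n_q({\mathfrak l}^{M_j}_p)$, so $\|U_\varepsilon T U_\delta\|=\|T\|$ by the ideal property — and average: $\mathbb{E}_{\varepsilon,\delta}\, U_\varepsilon^{-1}\mathcal{D}(U_\varepsilon T U_\delta)U_\delta^{-1}$ collapses to $\mathcal{D}(T)$ (the off-diagonal terms have mean zero), while each summand has norm $\le\|T\|$; convexity of the operator norm then yields $\|\mathcal{D}(T)\|\le\|T\|$. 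I expect the only subtlety is checking that coordinatewise unimodular multipliers act isometrically on ${\mathfrak l}^n_q({\mathfrak l}^{M_j}_p)$ — immediate from $|x_{j,k}|$ appearing in the norm — and that the averaging identity $\mathbb{E}\,U_\varepsilon^{-1}\mathcal{D}(U_\varepsilon A)=\mathcal{D}(A)$ holds, which is the classical fact that conjugating by random diagonal signs kills off-diagonal entries in expectation.
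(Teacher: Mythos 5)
Your final idea --- averaging $T$ over conjugations by unimodular diagonal isometries so that the off-diagonal entries cancel --- is exactly the mechanism of the paper's proof, which uses the deterministic identity $\mathcal{D}(A)=\frac{1}{N}\sum_{j=0}^{N-1}U(\omega)^j A U(\bar\omega)^j$ with $\omega=e^{2\pi i/N}$ and $U(\omega)=\mathrm{diag}(1,\omega,\dots,\omega^{N-1})$: each $U(\omega)^j$ is an isometry of every ${\mathfrak l}^n_q({\mathfrak l}^{M_j}_p)$ (the norms see only $|x_k|$), so the triangle inequality gives $\|\mathcal{D}(T)\|\le\|T\|$. Your Rademacher/Steinhaus version is the probabilistic analogue of this and is equally valid. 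The long preliminary detour through $\ell_\infty^N\to\ell_1^N$, nuclear norms and Proposition~\ref{coll-nuc}(ii) is dead weight that you yourself correctly abandon; nuclearity plays no role in this lemma.

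However, the averaging identity you actually commit to does not work as written. The expression $\mathbb{E}_{\varepsilon,\delta}\,U_\varepsilon^{-1}\mathcal{D}(U_\varepsilon T U_\delta)U_\delta^{-1}$ is circular: each term contains $\mathcal{D}(U_\varepsilon T U_\delta)$, and bounding its norm by $\|U_\varepsilon T U_\delta\|=\|T\|$ is precisely the statement you are trying to prove (in fact each such term already equals $\mathcal{D}(T)$ identically, so no averaging happens). Moreover, with two independent mean-zero families one has $\mathbb{E}_{\varepsilon,\delta}\,U_\varepsilon T U_\delta=0$, diagonal included, so independent $\varepsilon$ and $\delta$ is the wrong setup. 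The correct identity uses a single family and conjugation: for independent Steinhaus (or Rademacher) variables $\varepsilon_1,\dots,\varepsilon_N$ and $U_\varepsilon=\mathrm{diag}(\varepsilon_1,\dots,\varepsilon_N)$ one has $(U_\varepsilon A U_\varepsilon^{-1})_{m,k}=\varepsilon_m\bar\varepsilon_k\,a_{m,k}$ and $\mathbb{E}[\varepsilon_m\bar\varepsilon_k]=\delta_{mk}$, hence $\mathbb{E}_\varepsilon\,U_\varepsilon T U_\varepsilon^{-1}=\mathcal{D}(T)$ with no $\mathcal{D}$ inside the expectation; convexity of the operator norm and the isometry property then give $\|\mathcal{D}(T)\|\le\mathbb{E}_\varepsilon\|U_\varepsilon T U_\varepsilon^{-1}\|=\|T\|$. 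With this one-line correction your argument coincides with the paper's.
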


\begin{proof}
{ Let $A$ be the matrix of $T$. 
Let $\omega\in \C$ 
%$\omega= e^{2\pi\sqrt{-1}/N}$ 
and let $U(\omega)$ be the diagonal matrix with entries $1, \omega, \omega^2, \dots ,\omega^{N-1}$ down its diagonal. If $|\omega|=1$, then the matrix $U(\omega)$ generates an operator in $\C^N$ that is an isometry in the norm of   $ \ds{\mathfrak l}^n_{q^*}({\mathfrak l}^{M_j}_{p^*}) $ for all $p,q$. We take $\omega= e^{2\pi i/N}$.     
Using the identity $\sum_{j=0}^{N-1}\omega^j=0$ and elementary calculations one concludes, cf. \cite{Bh}, that
\begin{equation}
\mathcal{D}(A) = \frac{1}{N} \sum_{j=0}^{N-1} U(\omega)^j A U(\bar{\omega})^j . 
\end{equation}
%where $U^*$ denotes the adjoint matrix to $U$, .  
The operators $U(\omega)^j$ and $U(\bar{\omega})^j$ are isometries therefore the last formula implies $\| \mathcal{D}(T)\|=\| \mathcal{D}(A)\|\le \|A\|=\|T\|$.  } 
%Let $v \in  X_{p_1,q_1}^{n, M} $ be a vector of norm $1$.  Then $U^{*j}v$ is also a %vector of norm $1$ in  $ X_{p_1,q_1}^{n, M}$ and $U^jAU^{*j}v$ is a vector in  $ %X_{p_2,q_2}^{n, M} $ with the same norm as  $AU^{*j} v$. So 
%\begin{align*}
%  \| \mathcal{D}(T)\| & = \sup\left\{\|\mathcal{D}(A) v| X_{p_2,q_2}^{n, M}\| :\; %\|v|X_{p_1,q_1}^{n, M} \|=1\right\} \\
%  & \le  
%  \frac{1}{N} \sum_{j=0}^{N-1}    \sup \left\{\|U^jAU^{*j}v|X_{p_2,q_2}^{n, M} \| %:\;    \|v|X_{p_1,q_1}^{n, M} \|=1 \right\}  \\
%  &\le  \|A\| = \|T\|.
%\end{align*}
\end{proof}

%%%%%%%%%%%%%%%%%%%%%%%%%%%%%%%%%%%%%%%%%%%%%%%%%%
%To calculate the nuclear norm of the diagonal operators we will use the relation %between the nuclear operators and projective tensor products of Banach spaces, e.g. %cf. \cite[Chapter 2]{ryan}. We recall that the projective tensor product 
%$X \hat{\otimes}_\pi Y $ of two Banach spaces $X$ and $Y$ is a completion  of the %tensor product $X\otimes Y$ in the norm 
%\[
%\pi(u)= \inf \left\{ \sum_{k=1}^n \| x_k\|_X \|y_k\|_Y:\; u= \sum_{k=1}^n %x_k\otimes y_k  \right\} . 
%\]
%If $X$ and $Y$ are finite-dimensional, then  the space $X\otimes Y$ equipped with %the above norm is complete. 
%
%Let    $\mathcal{L}(X,Y)$ denote  the space of bounded linear operators from  $X$ %to  $Y$. If either $X'$ or $Y$ has the approximation property, then there is a %one-to-one linear map of  $X'\otimes Y$  into the space  $\mathcal{L}(X,Y)$ and it %induces an isometric isomorphism of   $X'\otimes Y$ onto $T\in\mathcal{N}(X,Y)$.   %We will need also the following result that is due to Grothendieck %\cite{grothendieck}, cf. also \cite{tong}.
%\begin{lemma}\label{gr}
%The map $L: (X'\hat{\otimes}_\pi Y)'\rightarrow \mathcal{L}(X',Y')$ defined, for %each $ Q\in (X'\hat{\otimes}_\pi Y)'$ by setting $L_Q$ to be the operator %$(L_Q(x'), y) = Q(x'\otimes y)$ for all $x'\otimes y\in X'\hat{\otimes}_\pi Y$ is %an isometric isomorphism. 
%\end{lemma}
%%%%%%%%%%%%%%%%%%%%%%%%%%%%%%%%%%%%%%%%%%%%%%%%%%%%%%%%%%

\begin{lemma}\label{nucD} 
Let $1\le p_i, q_i\le \infty$, $i=1,2$, $n\in\No$ and $M=(M_j)_{j\in\No}$ as above. Assume that %$D:X_{p_1,q_1}^{n, M}\rightarrow X_{p_2,q_2}^{n, M} $ 
$D:  \ds {\mathfrak l}^n_{q_1}({\mathfrak l}^{M_j}_{p_1}) \rightarrow   {\mathfrak l}^n_{q_2}({\mathfrak l}^{M_j}_{p_2}) $
is a diagonal linear operator defined by the diagonal matrix $A$ with entries $\lambda_{1},\dots, \lambda_{N}$. Then
\[ 
\nn{D}= \|\lambda |\mathfrak{l}^n_{\tn(q_1,q_2)}(\mathfrak{l}^{M_j}_{\tn(p_1,p_2)}) \| ,
\quad \lambda = (\lambda_k)_{k=1}^N,
\]
where $\tn(p_1,p_2)$ and $\tn(q_1,q_2)$ are given by \eqref{tongnumber}.
\end{lemma}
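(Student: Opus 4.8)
The goal is to compute the nuclear norm of a diagonal operator $D = D_\lambda$ acting between the finite-dimensional spaces $\mathfrak{l}^n_{q_1}(\mathfrak{l}^{M_j}_{p_1})$ and $\mathfrak{l}^n_{q_2}(\mathfrak{l}^{M_j}_{p_2})$, and show it equals $\|\lambda \mid \mathfrak{l}^n_{\tn(q_1,q_2)}(\mathfrak{l}^{M_j}_{\tn(p_1,p_2)})\|$. The natural strategy is to prove the two inequalities separately, exploiting the block structure of the spaces (a "$q$-sum of $p$-blocks") so that the inner blocks are handled by Tong's result (Proposition~\ref{prop-tong}) applied to $\ell^{M_j}_{p_1}\to\ell^{M_j}_{p_2}$, and the outer $q$-summation is then layered on top, again via Tong's result but now for the $\ell_{q}$-direction. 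The key structural observation is that a diagonal operator respects the block decomposition: within block $I_j$ it acts as a diagonal operator $D_j:\ell^{M_j}_{p_1}\to\ell^{M_j}_{p_2}$, and across blocks it is a "diagonal of diagonals."

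For the \emph{upper bound} $\nu(D)\le \|\lambda\mid \mathfrak{l}^n_{\tn(q_1,q_2)}(\mathfrak{l}^{M_j}_{\tn(p_1,p_2)})\|$, the plan is to build an explicit nuclear representation. First, by Proposition~\ref{prop-tong}(ii) each block operator $D_j$ has a nuclear representation in $\mathcal{N}(\ell^{M_j}_{p_1},\ell^{M_j}_{p_2})$ with nuclear norm exactly $\|(\lambda_k)_{k\in I_j}\mid \ell^{M_j}_{\tn(p_1,p_2)}\|$; since $D_j$ actually factors through $\ell^{M_j}_\infty$ (the canonical factorization $\ell^{M_j}_{p_1}\hookrightarrow\ell^{M_j}_\infty \xrightarrow{D_j}\ell^{M_j}_1\hookrightarrow\ell^{M_j}_{p_2}$ is available for the extremal pair, but for the general pair one uses Tong directly), one can write $D_j x = \sum_k a^{(j)}_k(x)\,y^{(j)}_k$ controlling both $\sum_k \|a^{(j)}_k\|\,\|y^{(j)}_k\|$ and, by rescaling, $\max_k \|a^{(j)}_k\|_{X'}$ and $\max_k\|y^{(j)}_k\|_Y$. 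Then one assembles these block representations into a global representation of $D$ on $\mathfrak{l}^n_{q_1}(\mathfrak{l}^{M_j}_{p_1})$, inserting a scalar diagonal operator in the $q$-direction with entries $\mu_j = \|(\lambda_k)_{k\in I_j}\mid \ell^{M_j}_{\tn(p_1,p_2)}\|$; applying Tong's result once more to the $\ell^{n}_{q_1}\to\ell^n_{q_2}$ diagonal $D_\mu$ gives nuclear norm $\|\mu\mid \ell^n_{\tn(q_1,q_2)}\|$, which is precisely the claimed right-hand side. The ideal property (Proposition~\ref{coll-nuc}(iii)) together with sub-multiplicativity of the block assembly yields $\nu(D)\le \|\mu\mid \ell^n_{\tn(q_1,q_2)}\| = \|\lambda\mid \mathfrak{l}^n_{\tn(q_1,q_2)}(\mathfrak{l}^{M_j}_{\tn(p_1,p_2)})\|$.

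For the \emph{lower bound} $\nu(D)\ge \|\lambda\mid \mathfrak{l}^n_{\tn(q_1,q_2)}(\mathfrak{l}^{M_j}_{\tn(p_1,p_2)})\|$, I would use a duality/trace argument combined with the previous two lemmas. Given any nuclear representation $Dx=\sum_i a_i(x)y_i$, one extracts lower bounds on $\sum_i\|a_i\|\,\|y_i\|$ by testing against suitable rank-one "trace functionals": pairing with the canonical basis vectors and their duals recovers the diagonal entries, and Lemma~\ref{diagnorm} guarantees that passing to the diagonal part does not increase the operator norm, so one may reduce general operators to diagonal ones without loss. Concretely, one writes the dual pairing that computes $\sum_k \lambda_k \xi_k$ for test sequences $\xi$, bounds it by $\nu(D)\cdot(\text{product of norms of the test functionals in the dual blocks})$, and optimizes over $\xi$; by Lemma~\ref{oper-norm} the optimal choice realizes the $\mathfrak{l}^n_{\tn(q_1,q_2)}(\mathfrak{l}^{M_j}_{\tn(p_1,p_2)})$ norm of $\lambda$. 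The main obstacle I anticipate is getting the outer $q$-layer and the inner $p$-layer to combine \emph{exactly} rather than up to constants: one must verify that the assembled nuclear representation is genuinely optimal at both levels simultaneously, which requires that Tong's equality for the $p$-blocks and his equality for the $q$-sum be compatible under composition — i.e., that no slack is introduced when the block-optimal representations are glued together. Handling the $p=\infty$ and $q=\infty$ cases (where the relevant spaces must be replaced by $c_0$ and the finite-dimensional simplifications differ) will require separate, though routine, attention.
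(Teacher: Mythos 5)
Your lower bound is essentially the paper's argument and is sound in outline: one uses $|\tr(SD)|\le \nn{D}\,\|S\|$, restricts to diagonal $S$ with entries $b$, computes $\|S\|=\|b\,|\,\mathfrak{l}^n_{\tn(q_1,q_2)'}(\mathfrak{l}^{M_j}_{\tn(p_1,p_2)'})\|$ via Lemma~\ref{oper-norm}, and invokes the sharpness of H\"older's inequality for $\sum_\ell\lambda_\ell b_\ell$. The genuine gap is in your upper bound. Gluing Tong-optimal representations of the block operators $D_j:\ell^{M_j}_{p_1}\to\ell^{M_j}_{p_2}$ by means of the norm-one coordinate projections and injections produces a nuclear representation of $D$ with nuclear sum $\sum_j \nn{D_j}=\sum_j\mu_j=\|\mu\,|\,\ell^n_1\|$; this equals the claimed bound only when $q_2\le q_1$ (i.e.\ $\tn(q_1,q_2)=1$). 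When $q_1<q_2$ the target $\|\mu\,|\,\ell^n_{\tn(q_1,q_2)}\|$ is strictly smaller, and achieving it requires rank-one pieces whose functionals and vectors are spread \emph{across} blocks (the analogue of the Walsh-matrix trick behind $\nn{\id:\ell^n_1\to\ell^n_\infty}=1$), not a per-block assembly. Your proposed remedy of ``inserting a scalar diagonal $D_\mu:\ell^n_{q_1}\to\ell^n_{q_2}$'' is not a factorisation of $D$: the blocks are multi-dimensional, so $D$ does not factor through $\ell^n_{q_1}$, and any flattening of $\ell_{q_1}(\ell^{M_j}_{p_1})$ into a scalar $\ell_{q_1}$-space is a contraction only under parameter restrictions such as $q_1\le p_1\le p_2\le q_2$. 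Indeed, the Remark following Theorem~\ref{nucl-seq-sp} in the paper carries out exactly this factorisation argument and explicitly notes that it works only in such special cases. You correctly identify the compatibility of the two layers as ``the main obstacle,'' but you do not resolve it, and for general $p_i,q_i$ it is not a routine verification.

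The paper's proof avoids constructing any nuclear representation for the upper bound. Since the spaces are finite-dimensional, trace duality gives the isometric identification $\mathcal{N}\big(\mathfrak{l}^n_{q_1}(\mathfrak{l}^{M_j}_{p_1}),\mathfrak{l}^n_{q_2}(\mathfrak{l}^{M_j}_{p_2})\big)'\cong \mathcal{L}\big(\mathfrak{l}^n_{q_2}(\mathfrak{l}^{M_j}_{p_2}),\mathfrak{l}^n_{q_1}(\mathfrak{l}^{M_j}_{p_1})\big)$, hence $\nn{D}=\sup\{|\tr(SD)|:\ \|S\|\le 1\}$. For any such $S$ one has $\tr(SD)=\tr(\mathcal{D}(S)D)=\sum_{\ell=1}^N\lambda_\ell b_\ell$, with $\|\mathcal{D}(S)\|\le\|S\|\le1$ by Lemma~\ref{diagnorm} and $\|\mathcal{D}(S)\|=\|b\,|\,\mathfrak{l}^n_{\tn(q_1,q_2)'}(\mathfrak{l}^{M_j}_{\tn(p_1,p_2)'})\|$ by Lemma~\ref{oper-norm}, so H\"older yields $\nn{D}\le\|\lambda\,|\,\mathfrak{l}^n_{\tn(q_1,q_2)}(\mathfrak{l}^{M_j}_{\tn(p_1,p_2)})\|$ in one stroke, with both layers handled simultaneously. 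If you want to keep a constructive upper bound, you would need to prove the two-level spreading construction explicitly; otherwise you should route the upper bound through the same duality identity you already use for the lower bound.
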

\begin{proof}
 \emph{Step 1}.\quad
Both spaces $\ds{\mathfrak l}^n_{q_1}({\mathfrak l}^{M_j}_{p_1})$ and  $\ds  {\mathfrak l}^n_{q_2}({\mathfrak l}^{M_j}_{p_2}) $ are finite-dimensional, therefore it is sufficient to consider the finite representations of any nuclear  operator $T$,  i.e., 
\begin{equation}\label{nucnrm1}
T = \sum_{\ell=1}^k x'_\ell\otimes y_\ell, \qquad x'_\ell\in ( {\mathfrak l}^n_{q_1}({\mathfrak l}^{M_j}_{p_1}))' \quad\text{and}\quad y_\ell\in  {\mathfrak l}^n_{q_2}({\mathfrak l}^{M_j}_{p_2}), 
\end{equation}
cf. \cite[p.19]{jameson}.  

First we show that  
the  space $\ds \mathcal{N}\big( {\mathfrak l}^n_{q_1}({\mathfrak l}^{M_j}_{p_1}), {\mathfrak l}^n_{q_2}({\mathfrak l}^{M_j}_{p_2})\big)'$, that is, the dual space to $\ds \mathcal{N}\big( {\mathfrak l}^n_{q_1}({\mathfrak l}^{M_j}_{p_1}), {\mathfrak l}^n_{q_2}({\mathfrak l}^{M_j}_{p_2})\big)$, is isometrically isomorphic to $\ds\mathcal{L} \big(
{\mathfrak l}^n_{q_2}({\mathfrak l}^{M_j}_{p_2}), {\mathfrak l}^n_{q_1}({\mathfrak l}^{M_j}_{p_1})\big)$. 
This can be proved via  trace-duality.
For any operator $S\in \mathcal{L} \ds\big(
{\mathfrak l}^n_{q_2}({\mathfrak l}^{M_j}_{p_2}), {\mathfrak l}^n_{q_1}({\mathfrak l}^{M_j}_{p_1})\big)$ the composition operator $ST$ is again a nuclear operator which belongs to  $\ds\mathcal{N}\big( {\mathfrak l}^n_{q_1}({\mathfrak l}^{M_j}_{p_1}), {\mathfrak l}^n_{q_1}({\mathfrak l}^{M_j}_{p_1})\big)$, and
\begin{equation} \nonumber
ST = \sum_{\ell=1}^k x'_\ell\otimes Sy_\ell, \qquad x'_\ell\in ( {\mathfrak l}^n_{q_1}({\mathfrak l}^{M_j}_{p_1}))' \quad\text{and}\quad Sy_\ell\in  {\mathfrak l}^n_{q_1}({\mathfrak l}^{M_j}_{p_1}) .
\end{equation}
Then
\[ 
\varphi_S(T) := \tr (ST) =   \sum_{\ell=1}^k x'_\ell (Sy_\ell)
\] 
defines a linear functional on $\ds\mathcal{N}\big( {\mathfrak l}^n_{q_1}({\mathfrak l}^{M_j}_{p_1}), {\mathfrak l}^n_{q_2}({\mathfrak l}^{M_j}_{p_2})\big)$. This is well-defined as the trace does not depend on the particular  representation of $ST$ and $T$, respectively.
Moreover, due to the ideal property of nuclear operators, recall Proposition~\ref{coll-nuc}(iii),  we have 
\[ \|\varphi_S|\mathcal{N}\big( {\mathfrak l}^n_{q_1}({\mathfrak l}^{M_j}_{p_1}), {\mathfrak l}^n_{q_2}({\mathfrak l}^{M_j}_{p_2})\big)'\|\le \|S| \mathcal{L} \big(
{\mathfrak l}^n_{q_2}({\mathfrak l}^{M_j}_{p_2}), {\mathfrak l}^n_{q_1}({\mathfrak l}^{M_j}_{p_1})\big)\|.\] 
The reverse inequality can be proved by applying the functional $\varphi_S$ to rank-one operators $T=x'\otimes y$ with $\|x' \|=\|y\|=1$. 
 Thus the mapping
 \[
\Phi : \mathcal{L}\big(
 {\mathfrak l}^n_{q_2}({\mathfrak l}^{M_j}_{p_2}), {\mathfrak l}^n_{q_1}({\mathfrak l}^{M_j}_{p_1})\big) \rightarrow   \mathcal{N}\big( {\mathfrak l}^n_{q_1}({\mathfrak l}^{M_j}_{p_1}), {\mathfrak l}^n_{q_2}({\mathfrak l}^{M_j}_{p_2})\big)', \quad 
 S \mapsto \varphi_S ,\]      
is an isometry. 
 It should be clear that both spaces have the same dimension and so the mapping is an isometric isomorphism. This implies that
 \[ \|\varphi_S\| = \|\varphi_S|\mathcal{N}\big( {\mathfrak l}^n_{q_1}({\mathfrak l}^{M_j}_{p_1}), {\mathfrak l}^n_{q_2}({\mathfrak l}^{M_j}_{p_2})\big)'\| = \|S| \mathcal{L} \big(
    {\mathfrak l}^n_{q_2}({\mathfrak l}^{M_j}_{p_2}), {\mathfrak l}^n_{q_1}({\mathfrak l}^{M_j}_{p_1})\big)\| = \|S\|,
    \]
such that, in particular,  
 \begin{align} \nonumber
	\nu(D) = & \sup\{|\varphi(D)|: \varphi \in \mathcal{N}\big( {\mathfrak l}^n_{q_1}({\mathfrak l}^{M_j}_{p_1}), {\mathfrak l}^n_{q_2}({\mathfrak l}^{M_j}_{p_2})\big)', \quad \|\varphi\|\le 1\} \\ \nonumber
	 = & \sup\{|\tr (SD)|:  S \in {\mathcal L}\big( {\mathfrak l}^n_{q_2}({\mathfrak l}^{M_j}_{p_2}), {\mathfrak l}^n_{q_1}({\mathfrak l}^{M_j}_{p_1})\big), \quad \|S\|\le 1\} . %\\ \label{nucnrm2}
	% = & \sup\{|(D, D(S))|:  S \in {\mathcal L}\big( {\mathfrak l}^n_{q_2}({\mathfrak l}^{M_j}_{p_2}), {\mathfrak l}^n_{q_1}({\mathfrak l}^{M_j}_{p_1})\big), \quad \|S\|\le 1\} ~.
\end{align}

\emph{Step 2.}\quad% We have $( {\mathfrak l}^n_{q_1}({\mathfrak l}^{M_j}_{p_1}))'=  {\mathfrak l}^n_{q'_1}({\mathfrak l}^{M_j}_{p'_1})$ and $( {\mathfrak l}^n_{q_2}({\mathfrak l}^{M_j}_{p_2}))'=  {\mathfrak l}^n_{q'_2}({\mathfrak l}^{M_j}_{p'_2})$.
Let $S \in \ds{\mathcal L}\big( {\mathfrak l}^n_{q_2}({\mathfrak l}^{M_j}_{p_2}), {\mathfrak l}^n_{q_1}({\mathfrak l}^{M_j}_{p_1})\big)$ with $\|S\|\leq 1$. We denote its diagonal part by $\mathcal{D}(S) : \ds {\mathfrak l}^n_{q_2}({\mathfrak l}^{M_j}_{p_2}) \rightarrow {\mathfrak l}^n_{q_1}({\mathfrak l}^{M_j}_{p_1})$, represented by the entries $b=(b_\ell)_{\ell=1}^N$. 
 Lemma \ref{oper-norm} shows that the subspace of diagonal operators in 
 $\ds\mathcal{L}\big( {\mathfrak l}^n_{q_2}({\mathfrak l}^{M_j}_{p_2}),{\mathfrak l}^n_{q_1}({\mathfrak l}^{M_j}_{p_1})\big)$ is isometrically isomorphic to  $\ds {\mathfrak l}_{\tilde{q}^\ast}^n({\mathfrak l}^{M_j}_{\tilde{p}^\ast})$, and
 \[ \|\mathcal{D}(S) | \mathcal{L}\big( {\mathfrak l}^n_{q_2}({\mathfrak l}^{M_j}_{p_2}),{\mathfrak l}^n_{q_1}({\mathfrak l}^{M_j}_{p_1})\big)\| = \| b | {\mathfrak l}_{\tilde{q}^\ast}^n({\mathfrak l}^{M_j}_{\tilde{p}^\ast})\|,
 \]
 where
 \[
 \frac{1}{\tilde{p}^\ast}:=\left(\frac{1}{p_1}-\frac{1}{p_2}\right)_+ \quad\text{and}\quad 
 \frac{1}{\tilde{q}^\ast}:=\left(\frac{1}{q_1}-\frac{1}{q_2}\right)_+ .
 \]
 Note that 
 \[
 \frac{1}{\tilde{p}^\ast} = 1-\frac{1}{\tn(p_1,p_2)} = \frac{1}{\tn(p_1,p_2)'}\quad\text{and}\quad 
 \frac{1}{\tilde{q}^\ast} = 1-\frac{1}{\tn(q_1,q_2)} = \frac{1}{\tn(q_1,q_2)'}.
 \]
 Consequently, 
\begin{align}\label{100621-2}
  \|\mathcal{D}(S) | \mathcal{L}\big( {\mathfrak l}^n_{q_2}({\mathfrak l}^{M_j}_{p_2}),{\mathfrak l}^n_{q_1}({\mathfrak l}^{M_j}_{p_1})\big)\| =
  \| b | {\mathfrak l}^n_{\tn(q_1,q_2)'}({\mathfrak l}^{M_j}_{\tn(p_1,p_2)'})\| .
\end{align}
Since $\tr(SD) = \tr(\mathcal{D}(S)D)$ and $\|\mathcal{D}(S)\|\leq \|S\|$ by Lemma~\ref{diagnorm}, we get for any such operator $S$ with $\|S\|\leq 1$ by H\"older's inequality that
\begin{align*}
  |\tr(SD)| & = |\tr(\mathcal{D}(S)D)| = |\sum_{\ell=1}^N {\lambda_{ \ell}}b_{ \ell}| \\
  & \leq \|\lambda |  {\mathfrak l}^n_{\tn(q_1,q_2)}({\mathfrak l}^{M_j}_{\tn(p_1,p_2)})\|\  \| b | {\mathfrak l}^n_{\tn(q_1,q_2)'}({\mathfrak l}^{M_j}_{\tn(p_1,p_2)'})\| \\
  & = \|\lambda |  {\mathfrak l}^n_{\tn(q_1,q_2)}({\mathfrak l}^{M_j}_{\tn(p_1,p_2)})\|\  \|\mathcal{D}(S)\| \\
  & \leq \|\lambda |  {\mathfrak l}^n_{\tn(q_1,q_2)}({\mathfrak l}^{M_j}_{\tn(p_1,p_2)})\|. 
\end{align*}
Thus the first step implies that
  \begin{align}\nonumber
    \nn{D} =  & \ \sup \{|\tr(S D)| : \
     %|\sum_{\ell=1}^N {\lambda_{ \ell}}b_{ \ell}| : \;  \| (b_{\ell})_{\ell=1}^N | {\mathfrak l}^n_{\tn(q_1,q_2)'}({\mathfrak l}^{M_j}_{\tn(p_1,p_2)'})\|
\|S | \mathcal{L}( {\mathfrak l}^n_{q_2}({\mathfrak l}^{M_j}_{p_2})),{\mathfrak l}^n_{q_1}({\mathfrak l}^{M_j}_{p_1})\|
    \le 1\;\} \\
%&  \text{and} \;L_Q\;\text{is a diagonal operator with entries}\; b_{11},\ldots, b_{NN}\}. \nonumber  
\leq & \ \|\lambda |  {\mathfrak l}^n_{\tn(q_1,q_2)}({\mathfrak l}^{M_j}_{\tn(p_1,p_2)})\|.
    \label{100621-1}
  \end{align}
  Conversely, we may restrict ourselves to diagonal operators $S=\mathcal{D}(S)$ in $\ds\mathcal{L}\big( {\mathfrak l}^n_{q_2}({\mathfrak l}^{M_j}_{p_2}),{\mathfrak l}^n_{q_1}({\mathfrak l}^{M_j}_{p_1})\big)$ and benefit from the sharpness of the H\"older inequality to obtain the estimate from below,
  $$\nn{D} \geq \|\lambda |  {\mathfrak l}^n_{\tn(q_1,q_2)}({\mathfrak l}^{M_j}_{\tn(p_1,p_2)})\|.$$
  This concludes the argument.
%Now the claim follows from \eqref{100621-1} and \eqref{100621-2}.
 
 \ignore{
  \lsred{ \emph{Step 1}.\quad
Both spaces $\lsred{\ds  {\mathfrak l}^n_{q_1}({\mathfrak l}^{M_j}_{p_1}) }$ and  $\lsred{ \ds {\mathfrak l}^n_{q_2}({\mathfrak l}^{M_j}_{p_2}) }$ are finite-dimensional therefore it is sufficient to consider the finite representations of any nuclear  operator $T$,  i.e., 
\begin{equation}\label{nucnrm1}
T = \sum_{\ell=1}^k x'_\ell\otimes y_\ell, \qquad x'_\ell\in ( {\mathfrak l}^n_{q_1}({\mathfrak l}^{M_j}_{p_1}))' \quad\text{and}\quad y_\ell\in  {\mathfrak l}^n_{q_2}({\mathfrak l}^{M_j}_{p_2}), 
\end{equation}
cf. \cite[p.19]{jameson}.  First we prove that  
\begin{align}\label{nucnrm2}
\nn{D} =  \sup \{ |(D,D(S))|: \; \text{where} \; S\in \mathcal{L}\big(
{\mathfrak l}^n_{q_2}({\mathfrak l}^{M_j}_{p_2}), {\mathfrak l}^n_{q_1}({\mathfrak l}^{M_j}_{p_1})\big)    
%\big((X_{p_1,q_1}^{n, M})'{\otimes} X_{p_2,q_2}^{n, M}\big)' 
,\;\|Q\|\le 1\;\} 
%& \qquad\qquad  \text{and} \;L_Q\;\text{is a diagonal operator}\}. \nonumber
\end{align}

The  space $\mathcal{N}\ds\big( {\mathfrak l}^n_{q_1}({\mathfrak l}^{M_j}_{p_1}), {\mathfrak l}^n_{q_2}({\mathfrak l}^{M_j}_{p_2})\big)'$ dual  to $\ds\mathcal{N}\big( {\mathfrak l}^n_{q_1}({\mathfrak l}^{M_j}_{p_1}), {\mathfrak l}^n_{q_2}({\mathfrak l}^{M_j}_{p_2})\big)$ is isometrically isomorphic to $\mathcal{L} \ds\big(
{\mathfrak l}^n_{q_2}({\mathfrak l}^{M_j}_{p_2}), {\mathfrak l}^n_{q_1}({\mathfrak l}^{M_j}_{p_1})\big)$. This can be proved via  trace-duality. It should be clear that the both spaces have the same dimension. The trace 
\[{\rm tr}(T)=  \sum_{\ell=1}^k x'_\ell (y_\ell) \] 
defines a linear form on    $\ds\mathcal{N}\big( {\mathfrak l}^n_{q_1}({\mathfrak l}^{M_j}_{p_1}), {\mathfrak l}^n_{q_2}({\mathfrak l}^{M_j}_{p_2})\big)$ and 
$|{\rm tr}\,(T)| \le \nu(T)$. So for any operator $\ds S\in \mathcal{L} \big(
{\mathfrak l}^n_{q_2}({\mathfrak l}^{M_j}_{p_2}), {\mathfrak l}^n_{q_1}({\mathfrak l}^{M_j}_{p_1})\big)$ the formula 
\[ 
\varphi_S(T) = {\rm tr}\,(ST),\qquad T\in \mathcal{N}\big( {\mathfrak l}^n_{q_1}({\mathfrak l}^{M_j}_{p_1}), {\mathfrak l}^n_{q_2}({\mathfrak l}^{M_j}_{p_2})\big)
\]
defines a functional on $\ds\mathcal{N}\big( {\mathfrak l}^n_{q_1}({\mathfrak l}^{M_j}_{p_1}), {\mathfrak l}^n_{q_2}({\mathfrak l}^{M_j}_{p_2})\big)$. Moreover, due to the ideal property of nuclear operators we have  $\|\varphi_S\|\le \|S\|$. 
 We can prove the reverse inequality by applying the functional $\varphi_S$ to rank-one operators $T=x'\otimes y$ with $\|x' \|=\|y\|=1$.  Thus the mapping
 \[
 \mathcal{L}\big(
 {\mathfrak l}^n_{q_2}({\mathfrak l}^{M_j}_{p_2}), {\mathfrak l}^n_{q_1}({\mathfrak l}^{M_j}_{p_1})\big)\ni S \mapsto \varphi_S \in  \mathcal{N}\big( {\mathfrak l}^n_{q_1}({\mathfrak l}^{M_j}_{p_1}), {\mathfrak l}^n_{q_2}({\mathfrak l}^{M_j}_{p_2})\big)'\]      
is an isometry. 

In consequence
\begin{align} \nonumber
	\nu(D) = & \sup\{|\varphi(D)|: \varphi \in \mathcal{N}\big( {\mathfrak l}^n_{q_1}({\mathfrak l}^{M_j}_{p_1}), {\mathfrak l}^n_{q_2}({\mathfrak l}^{M_j}_{p_2})\big)', \quad \|\varphi\|\le 1\} \\ \nonumber
	 = & \sup\{|{\rm tr}\,(SD)|:  S \in \mathcal{L}\big( {\mathfrak l}^n_{q_2}({\mathfrak l}^{M_j}_{p_2}), {\mathfrak l}^n_{q_1}({\mathfrak l}^{M_j}_{p_1})\big)', \quad \|S\|\le 1\} \\ \nonumber
	 = & \sup\{|(D, D(S))|:  S \in \mathcal{L}\big( {\mathfrak l}^n_{q_2}({\mathfrak l}^{M_j}_{p_2}), {\mathfrak l}^n_{q_1}({\mathfrak l}^{M_j}_{p_1})\big)', \quad \|S\|\le 1\} 
\end{align}
%%%%%%%%%%%%%%%%%%%%%%%
 
%%%%%%%%%%%%%%%%%%%%%%%%%%%%%%%%%%%%%%%%%%%%%%%%
  \emph{Step 2.}\quad We have $\ds( {\mathfrak l}^n_{q_1}({\mathfrak l}^{M_j}_{p_1}))'=  {\mathfrak l}^n_{q'_1}({\mathfrak l}^{M_j}_{p'_1})$ and $\ds( {\mathfrak l}^n_{q_2}({\mathfrak l}^{M_j}_{p_2}))'=  {\mathfrak l}^n_{q'_2}({\mathfrak l}^{M_j}_{p'_2})$. Let $Q: {\mathfrak l}^n_{q'_1}({\mathfrak l}^{M_j}_{p'_1}) \rightarrow {\mathfrak l}^n_{q'_2}({\mathfrak l}^{M_j}_{p'_2})$ be a diagonal linear operator with entries $b_{1},\dots, b_{N}$. Then the first step implies
  \begin{align}\label{100621-1}
 \nn{D} =  \sup \{|(D,Q)| = |\sum_{\ell=1}^N {\lambda_{ \ell}}b_{ \ell}| : \; \text{where} \; \|Q | \mathcal{L}( {\mathfrak l}^n_{q'_1}({\mathfrak l}^{M_j}_{p'_1})),{\mathfrak l}^n_{q'_2}({\mathfrak l}^{M_j}_{p'_2})\| \le 1\;\}. %\\
%&  \text{and} \;L_Q\;\text{is a diagonal operator with entries}\; b_{11},\ldots, b_{NN}\}. \nonumber  
 \end{align}
 Lemma \ref{oper-norm} shows that the subspace of diagonal operators in 
 $\ds\mathcal{L}\big( {\mathfrak l}^n_{q'_1}({\mathfrak l}^{M_j}_{p'_1}),{\mathfrak l}^n_{q'_2}({\mathfrak l}^{M_j}_{p'_2})\big)$ is isometrically isomorphic to  $\ds{\mathfrak l}_{\tilde{q}^\ast}^n({\mathfrak l}^{M_j}_{\tilde{p}^\ast})$. Consequently we have 
\begin{align}\label{100621-2}
	 \|Q | \mathcal{L}\big( {\mathfrak l}^n_{q'_1}({\mathfrak l}^{M_j}_{p'_1}),{\mathfrak l}^n_{q'_2}({\mathfrak l}^{M_j}_{p'_2})\big)\| = \| (b_{\ell\ell})_{\ell=1}^N | {\mathfrak l}_{\tilde{q}^\ast}^n({\mathfrak l}^{M_j}_{\tilde{p}^\ast})\|   =  \| (b_{\ell})_{\ell=1}^N | {\mathfrak l}^n_{\tn(q_1,q_2)'}({\mathfrak l}^{M_j}_{\tn(p_1,p_2)'})\| 
\end{align}
because
 \[
 \frac{1}{\tilde{p}^\ast} := \left(\frac{1}{p_2'}-\frac{1}{p_1'}\right)_+ =\left(\frac{1}{p_1}-\frac{1}{p_2}\right)_+ = 1-\frac{1}{\tn(p_1,p_2)} = \frac{1}{\tn(p_1,p_2)'}\ ,
 \]
similarly for the $q$-parameters.

Now the claim follows from \eqref{100621-1} and \eqref{100621-2}.}}
% Hence $\nn{\cdot}$ is the dual norm on $D$ when $D$ is regarded as being in the %linear form  of the space of all bounded diagonal operators 
% $L:  {\mathfrak l}^n_{q'_1}({\mathfrak l}^{M_j}_{p'_1}) \rightarrow{\mathfrak %l}^n_{q'_2}({\mathfrak l}^{M_j}_{p'_2})$, 
% %$L:  X_{p'_1,q'_1}^{n, M} X_{p'_2,q'_2}^{n, M}$, 
% the dual norm have to be $  \| (\lambda_{\ell})_{\ell=1}^N | 
% \mathfrak{l}^n_{\tn(q_1,q_2)}(\mathfrak{l}^{M_j}_{\tn(p_1,p_2)})\|$. 
% This finally implies  the claim.}
%%%%%%%%%%%%%%%%%%%%%%%%%%%%%%%%%%%%
\end{proof}

%\begin{theorem}
\begin{proposition}
\label{main-seq-nuc}
  Let $1\le p_i, q_i\le \infty$, $i=1,2$, {$M=(M_j)_{j\in\No}$ as above},  and let %$D_\lambda:X_{p_1,q_1}^{M} \rightarrow X_{p_2,q_2}^{M}$ 
  $\ds D_\lambda: \mathfrak{l}_{q_1}(\mathfrak{l}^{M_j}_{p_1}) \rightarrow \mathfrak{l}_{q_2}(\mathfrak{l}^{M_j}_{p_2})$ be a diagonal linear operator defined by the sequence  $\lambda=(\lambda_{j})_{j\in\nat}$. %1,\lambda_2, \ldots )$.
  Then the operator $D_\lambda$ is nuclear if, and only if,  
  $\ds \lambda \in \mathfrak{l}_{\tn(q_1,q_2)}(\mathfrak{l}^{M_j}_{\tn(p_1,p_2)})\ $  %$X_{\tn(p_1,p_2),\tn(q_1,q_2)}^{M}\ $ 
  and $\ \lambda \in c_0\ $ 
%$\lambda_j\rightarrow 0$ for  $k\rightarrow \infty$
if  $\tn(q_1,q_2)=\infty$. Moreover,
\[\nn{D_\lambda}= \|\lambda %(\lambda_1,\lambda_2, \ldots )
|\mathfrak{l}_{\tn(q_1,q_2)}(\mathfrak{l}^{M_j}_{\tn(p_1,p_2)})\| .\] 
\end{proposition}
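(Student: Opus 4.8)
The plan is to deduce Proposition~\ref{main-seq-nuc} from its finite-dimensional version, Lemma~\ref{nucD}, by a truncation argument, using that $\mathcal{N}(X,Y)$ is complete and that finite-rank operators are nuclear. For $n\in\no$ write $D_\lambda^{(n)}$ for the truncation of $D_\lambda$ to the first $n+1$ blocks $I_0,\dots,I_n$, and $\tilde D_\lambda^{(n)}:\mathfrak{l}^n_{q_1}(\mathfrak{l}^{M_j}_{p_1})\to\mathfrak{l}^n_{q_2}(\mathfrak{l}^{M_j}_{p_2})$ for the associated operator between the finite-dimensional spaces. The coordinate projection $\mathfrak{l}_{q_1}(\mathfrak{l}^{M_j}_{p_1})\to\mathfrak{l}^n_{q_1}(\mathfrak{l}^{M_j}_{p_1})$ and the canonical inclusion $\mathfrak{l}^n_{q_2}(\mathfrak{l}^{M_j}_{p_2})\to\mathfrak{l}_{q_2}(\mathfrak{l}^{M_j}_{p_2})$ both have norm $\le 1$, so by the ideal property (Proposition~\ref{coll-nuc}(iii)) and Lemma~\ref{nucD} one has $\nn{D_\lambda^{(n)}}=\nn{\tilde D_\lambda^{(n)}}=\|\lambda\,|\,\mathfrak{l}^n_{\tn(q_1,q_2)}(\mathfrak{l}^{M_j}_{\tn(p_1,p_2)})\|$, and, after an obvious reindexing, $\nn{D_\lambda^{(m)}-D_\lambda^{(n)}}$ equals the partial sum of $\|\lambda\,|\,\mathfrak{l}_{\tn(q_1,q_2)}(\mathfrak{l}^{M_j}_{\tn(p_1,p_2)})\|$ over the blocks $I_{n+1},\dots,I_m$ (a maximum if $\tn(q_1,q_2)=\infty$).

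\emph{Sufficiency.} Assume $\lambda\in\mathfrak{l}_{\tn(q_1,q_2)}(\mathfrak{l}^{M_j}_{\tn(p_1,p_2)})$, with the block norms forming a null sequence when $\tn(q_1,q_2)=\infty$. Then the identities above show $(D_\lambda^{(n)})_n$ is Cauchy in $\mathcal{N}(\mathfrak{l}_{q_1}(\mathfrak{l}^{M_j}_{p_1}),\mathfrak{l}_{q_2}(\mathfrak{l}^{M_j}_{p_2}))$, hence converges there to a nuclear operator $D$; since $\|\cdot\|\le\nn{\cdot}$ it also converges to $D$ in the operator norm. On the other hand, by the (straightforward) infinite-dimensional analogue of Lemma~\ref{oper-norm}, $\|D_\lambda-D_\lambda^{(n)}\|=\|\lambda-\lambda^{(n)}\,|\,\mathfrak{l}_{q^\ast}(\mathfrak{l}^{M_j}_{p^\ast})\|\to 0$, because $\tn(q_1,q_2)\le q^\ast$ and $\tn(p_1,p_2)\le p^\ast$ give the continuous embedding $\mathfrak{l}_{\tn(q_1,q_2)}(\mathfrak{l}^{M_j}_{\tn(p_1,p_2)})\hookrightarrow\mathfrak{l}_{q^\ast}(\mathfrak{l}^{M_j}_{p^\ast})$ and the block norms of $\lambda$ are null in the larger space as well. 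Hence $D=D_\lambda$ is nuclear and $\nn{D_\lambda}=\lim_n\nn{D_\lambda^{(n)}}=\|\lambda\,|\,\mathfrak{l}_{\tn(q_1,q_2)}(\mathfrak{l}^{M_j}_{\tn(p_1,p_2)})\|$.

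\emph{Necessity of the main condition and the reverse estimate.} If $D_\lambda$ is nuclear, then writing $\tilde D_\lambda^{(n)}$ as the composition of $D_\lambda$ with the norm-one inclusion and projection gives $\nn{\tilde D_\lambda^{(n)}}\le\nn{D_\lambda}$, so Lemma~\ref{nucD} and monotone convergence (as $n\to\infty$) yield $\lambda\in\mathfrak{l}_{\tn(q_1,q_2)}(\mathfrak{l}^{M_j}_{\tn(p_1,p_2)})$ and $\|\lambda\,|\,\mathfrak{l}_{\tn(q_1,q_2)}(\mathfrak{l}^{M_j}_{\tn(p_1,p_2)})\|\le\nn{D_\lambda}$; together with the sufficiency part this pins down the norm formula.

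\emph{The remaining obstacle.} What is left is the necessity of the null-sequence ($c_0$) refinement when $\tn(q_1,q_2)=\infty$, which happens exactly for $q_1=1$, $q_2=\infty$; this is the only genuinely delicate point. The truncation argument only delivers $\sup_j\|\lambda_{I_j}\,|\,\ell^{M_j}_{\tn(p_1,p_2)}\|\le\nn{D_\lambda}$, and nuclearity-implies-compactness does not help, since by the infinite-dimensional Lemma~\ref{oper-norm} compactness of $D_\lambda$ only forces the (in general strictly smaller) block norms in $\ell^{M_j}_{p^\ast}$ to vanish. Instead I would argue as Tong does in \cite{tong} (see also \cite{Pie-op-2}) for the extreme inclusion $\ell_1\hookrightarrow\ell_\infty$: if these block norms do not tend to $0$, pass by the ideal property to the sub-operator supported on a subsequence of blocks on which $\|\lambda_{I_j}\,|\,\ell^{M_j}_{\tn(p_1,p_2)}\|\ge\delta>0$, and show this operator cannot be a limit in the nuclear norm of finitely supported diagonal operators — equivalently cannot admit a representation $D_\lambda=\sum_i a_i\otimes y_i$ with $\sum_i\|a_i\|\,\|y_i\|<\infty$ — by combining the relation $\|\lambda_{I_j}\,|\,\ell^{M_j}_{\tn(p_1,p_2)}\|=\|(\sum_i a_{i,k}y_{i,k})_{k\in I_j}\,|\,\ell^{M_j}_{\tn(p_1,p_2)}\|$ with Lemma~\ref{diagnorm} and the absence of a norm-convergent diagonal approximation in the $\mathfrak{l}_1\to\mathfrak{l}_\infty$ situation. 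I expect this last step to be the main difficulty; everything else reduces to bookkeeping once Lemma~\ref{nucD} is available.
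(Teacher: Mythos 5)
Your ``Sufficiency'' and ``Necessity of the main condition'' steps reproduce the paper's own proof essentially verbatim: the paper likewise truncates, combines Lemma~\ref{nucD} with the ideal property (Proposition~\ref{coll-nuc}(iii)) to get $\|\lambda\,|\,\mathfrak{l}^n_{\tn(q_1,q_2)}(\mathfrak{l}^{M_j}_{\tn(p_1,p_2)})\|\le\nn{D_\lambda}$ and lets $n\to\infty$, and for the converse uses that the truncated diagonal operators are Cauchy in the complete space $\mathcal{N}$, identifying the limit with $D_\lambda$ (the paper via pointwise convergence, you via operator-norm convergence — an immaterial difference). These parts are correct.

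The genuine gap is exactly the one you flag yourself: the necessity of the vanishing of the block norms $\|\lambda_{I_j}\,|\,\ell^{M_j}_{\tn(p_1,p_2)}\|$ when $\tn(q_1,q_2)=\infty$, i.e.\ $q_1=1$, $q_2=\infty$. Your diagnosis of why compactness does not settle it is right: compactness only forces the $\ell^{M_j}_{p^\ast}$ block norms to vanish, and whenever $\{p_1,p_2\}\ne\{1,\infty\}$ one has $\tn(p_1,p_2)<p^\ast$, so there are compact diagonal operators whose $\ell^{M_j}_{\tn(p_1,p_2)}$ block norms stay bounded away from zero (take $M_j=j$, $p_1=p_2=2$, $\lambda_{j,k}=1/j$: the $\ell_\infty$-block norms are $1/j\to0$ but the $\ell_1$-block norms are constantly $1$). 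You should be aware that the paper itself disposes of this point with the single sentence ``if $\lambda\notin c_0$ then $D_\lambda$ is not compact'', which, under the block-norm reading that Step~2 and Theorem~\ref{nucl-seq-sp} require, is open to precisely your objection — so you have put your finger on the thinnest point of the published argument, not only of your own. Your proposed repair, however, does not close the gap: the claimed equivalence ``cannot be a limit in the nuclear norm of finitely supported diagonal operators — equivalently cannot admit a nuclear representation'' is false in this setting, since for a target of the form $\mathfrak{l}_\infty(\cdot)$ the truncation projections do not converge strongly to the identity, so a nuclear operator need not be the nuclear-norm limit of its truncations; and ``arguing as Tong does for $\ell_1\hookrightarrow\ell_\infty$'' cannot work either, because Tong's necessity proof in that extreme scalar case is exactly the compactness argument you have just (correctly) ruled out. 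A genuinely new argument is needed at this point, and neither your sketch nor the paper supplies one.
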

%\end{theorem}

\begin{proof} 
  \emph{Step 1.}\quad First we show that the nuclearity of $D_\lambda$ implies that $\lambda\in\mathfrak{l}_{\tn(q_1,q_2)}(\mathfrak{l}^{M_j}_{p_1,p_2})\ $ with the additional requirement that $\ \lambda \in c_0\ $ if $\tn(q_1,q_2)=\infty$. In particular, we shall obtain that
  \begin{equation}\label{lower_est}
     \nn{D_\lambda} \geq \| \lambda |\mathfrak{l}_{\tn(q_1,q_2)}(\mathfrak{l}^{M_j}_{\tn(p_1,p_2)})\| .
  \end{equation}
Let for $n\in\No$, $\ds P_n: 	\mathfrak{l}_{q_2}(\mathfrak{l}^{M_j}_{p_2})\rightarrow  \mathfrak{l}^n_{q_2}(\mathfrak{l}^{M_j}_{p_2})$ denote a projection given by $(x_1,x_2,\ldots )\mapsto (x_1,\ldots ,x_{N(n)})$ and let  $\ds S_n: \mathfrak{l}^n_{q_1}(\mathfrak{l}^{M_j}_{p_1})\rightarrow \mathfrak{l}_{q_1}(\mathfrak{l}^{M_j}_{p_1})$ denote an embedding given by $(x_1,\ldots ,x_{N(n)})\mapsto (x_1,x_2,\ldots , x_{N(n)}, 0,0,\ldots )$. Both operators have norm $1$.
% We have the following commutative diagram 
%\[
%\begin{CD}
%  X_{p_1,q_1}^{n,M} @>D>> X_{p_2,q_2}^{n,M} \\ 
% @VS_nVV @AAP_nA   \\
% X_{p_1,q_1}^{M} @>D>> X_{p_2,q_2}^{M}  
%\end{CD}
%\]
If %$D_\lambda:X_{p_1,q_1}^{M} \rightarrow X_{p_2,q_2}^{M}$ 
$\ds D_\lambda: \mathfrak{l}_{q_1}(\mathfrak{l}^{M_j}_{p_1}) \rightarrow \mathfrak{l}_{q_2}(\mathfrak{l}^{M_j}_{p_2})$ is a nuclear operator, then by Proposition~\ref{coll-nuc}(iii) 
%$ P_nD_\lambdaS_n:X_{p_1,q_1}^{n,M} \rightarrow X_{p_2,q_2}^{n,M}$ 
$\ds P_nD_\lambda S_n: \mathfrak{l}_{q_1}(\mathfrak{l}^{M_j}_{p_1}) \rightarrow \mathfrak{l}_{q_2}(\mathfrak{l}^{M_j}_{p_2})$ is nuclear and 
\begin{equation}
\nn{P_nD_\lambda S_n} \le  \|P_n\|\nn{D_\lambda}\|S_n\| = \nn{D_\lambda} . 
\end{equation}
Hence Lemma \ref{nucD} implies
\begin{align*}
  \|\lambda %(\lambda_1,\lambda_2, \ldots )
  |\mathfrak{l}_{\tn(q_1,q_2)}(\mathfrak{l}^{M_j}_{\tn(p_1,p_2)})\| & =\lim_{n\rightarrow \infty}\|(\lambda_{1},\lambda_{2}, \ldots, \lambda_{N(n)})| \mathfrak{l}_{\tn(q_1,q_2)}(\mathfrak{l}^{M_j}_{\tn(p_1,p_2)})\| \\
  & = \lim_{n\rightarrow \infty} \nn{ P_nD_\lambda S_n} \le   \nn{D_\lambda}.
  \end{align*}

If $\tn(q_1,q_2)=\infty$, we need to show, in addition, that $\lambda\in c_0$. However, if $\lambda \not\in c_0$, then one can easily prove that the operator $D_\lambda$ is not compact. So it is also not nuclear. \\

\emph{Step 2.}\quad It remains to show the converse direction, i.e., the sufficiency of $\lambda\in\mathfrak{l}_{\tn(q_1,q_2)}(\mathfrak{l}^{M_j}_{\tn(p_1,p_2)})$ for the nuclearity of $D_\lambda$, and the inequality converse to \eqref{lower_est}. 
Suppose that we have a diagonal  operator $D_\lambda$ defined by the   sequence $\lambda\in  \mathfrak{l}_{\tn(q_1,q_2)}(\mathfrak{l}^{M_j}_{\tn(p_1,p_2)})$, with $\lambda\in c_0$ if $\tn(q_1,q_2)=\infty$. Then the sequence $\left((\lambda_{1},\lambda_{2}, \ldots , \lambda_{N(n)},0,0, \dots)\right)_n$ is a Cauchy sequence in $\mathfrak{l}_{\tn(q_1,q_2)}(\mathfrak{l}^{M_j}_{\tn(p_1,p_2)})$. Thus Lemma \ref{nucD} implies that the sequence of  diagonal operators $D^{(n)}_\lambda$ that are defined by the sequences  $\left((\lambda_{1},\lambda_{2}, \ldots , \lambda_{N(n)},0,0, \dots)\right)_n$, is a Cauchy sequence in %$\mathcal{N}( X_{p_1,q_1}^{M}, X_{p_2,q_2}^{M})$. 
$\mathcal{N}(\mathfrak{l}_{q_1}(\mathfrak{l}^{M_j}_{p_1}), \mathfrak{l}_{q_2}(\mathfrak{l}^{M_j}_{p_2})$. The space of nuclear operators is a Banach space, so the sequence $(D^{(n)}_\lambda)_n$ is convergent to a nuclear operator $\widetilde{D}$ in the nuclear norm.  On the other hand one can easily see that
\[ \lim_{n\rightarrow \infty} D^{(n)}_\lambda(x_1,x_2,\ldots )= (\lambda_{1} x_1,\lambda_{2}x_2,\ldots ) = D_\lambda x. \]  
So the sequence $ (D^{(n)}_\lambda)_n$ converges to $\widetilde{D}=D_\lambda$ in the sense of pointwise convergence. The convergence in the sense of the nuclear norm is stronger, so  $ (D^{(n)}_\lambda)_n$ converges to $D_\lambda$ also in $\mathcal{N}\big(
\mathfrak{l}_{q_1}(\mathfrak{l}^{M_j}_{p_1}),  \mathfrak{l}_{q_2}(\mathfrak{l}^{M_j}_{p_2})\big)$ and 
\begin{align*}
  \nn{D_\lambda} & = \lim_{n\rightarrow \infty} \nn{D^{(n)}_\lambda} \\
  &=  \lim_{n\rightarrow \infty} \|(\lambda_{1},\lambda_{2}, \ldots, \lambda_{N(n)},0,0,\dots )| \mathfrak{l}_{\tn(q_1,q_2)}(\mathfrak{l}^{M_j}_{\tn(p_1,p_2)})\| \\
  & =\|\lambda|\mathfrak{l}_{\tn(q_1,q_2)}(\mathfrak{l}^{M_j}_{\tn(p_1,p_2)})\|.
  \end{align*}
\end{proof}

Now we are ready to present our main outcome in this section. It generalises Tong's result \cite{tong} as recalled in Proposition~\ref{prop-tong} to the setting of generalised vector-valued sequence spaces.

\begin{theorem}
\label{nucl-seq-sp}
  Let $1\leq p_i, q_i\leq\infty$, $i=1,2$, $(\beta_j)_{j\in\no}$ be an arbitrary weight sequence and $(M_j)_{j\in\no}$ be a sequence of natural numbers. Then the embedding
  \begin{equation}
\id_\beta : \ell_{q_1}\left(\beta_j \ell_{p_1}^{M_j}\right) \rightarrow 
\ell_{q_2}\left(\ell_{p_2}^{M_j}\right)
  \end{equation}
  is nuclear if, and only if,
  \begin{align}\label{nucseqsp}
     \left(\beta_j^{-1} M_j^{\frac{1}{\tn(p_1,p_2)}}\right)_{j\in\no} \in \ell_{\tn(q_1,q_2)},
        \end{align}
  where for $\tn(q_1,q_2)=\infty$ the space $\ell_\infty$ has to be replaced by $c_0$. In that case,
  \[
  \nn{\id_\beta} { = } \left\|   \left(\beta_j^{-1} M_j^{\frac{1}{\tn(p_1,p_2)}}\right)_{j\in\no} | \ell_{\tn(q_1,q_2)}\right\|.
  \]
%\end{corollary}
\end{theorem}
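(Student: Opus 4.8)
The plan is to reduce the embedding $\id_\beta$ to a diagonal operator between the ``matrix-style'' spaces $\mathfrak{l}_q(\mathfrak{l}^{M_j}_p)$ and then invoke Proposition~\ref{main-seq-nuc}. Recall from the preliminaries that $\ell_q(\ell_p^{M_j})$ is isometrically isomorphic to $\mathfrak{l}_q(\mathfrak{l}^{M_j}_p)$, where the index set is flattened via the blocks $I_j = \{\alpha_j+1,\dots,\alpha_{j+1}\}$ with $|I_j| = M_j$. Under this identification, the weight factor $\beta_j$ multiplying the whole $j$-th block becomes a diagonal multiplier: define $\lambda = (\lambda_k)_{k\in\nat}$ by $\lambda_k = \beta_j^{-1}$ whenever $k\in I_j$. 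Then $\id_\beta$ corresponds (up to the two isometric isomorphisms on domain and target) to the diagonal operator $D_\lambda : \mathfrak{l}_{q_1}(\mathfrak{l}^{M_j}_{p_1}) \to \mathfrak{l}_{q_2}(\mathfrak{l}^{M_j}_{p_2})$. Since nuclearity is preserved under composition with isomorphisms and the nuclear norm is unchanged under isometric isomorphisms (Proposition~\ref{coll-nuc}(iii)), we get $\nn{\id_\beta} = \nn{D_\lambda}$, and $\id_\beta$ is nuclear if and only if $D_\lambda$ is.

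Next I would apply Proposition~\ref{main-seq-nuc}: $D_\lambda$ is nuclear if and only if $\lambda \in \mathfrak{l}_{\tn(q_1,q_2)}(\mathfrak{l}^{M_j}_{\tn(p_1,p_2)})$ (with $\lambda\in c_0$ if $\tn(q_1,q_2)=\infty$), and in that case $\nn{D_\lambda} = \|\lambda \mid \mathfrak{l}_{\tn(q_1,q_2)}(\mathfrak{l}^{M_j}_{\tn(p_1,p_2)})\|$. It then remains only to compute this norm for our particular $\lambda$. Since $\lambda$ is constant equal to $\beta_j^{-1}$ on the block $I_j$ of cardinality $M_j$, the inner $\ell_{\tn(p_1,p_2)}^{M_j}$-norm of the $j$-th block is $\beta_j^{-1} M_j^{1/\tn(p_1,p_2)}$ (with the obvious reading $M_j^0 = 1$ when $\tn(p_1,p_2)=\infty$), and hence
\[
\|\lambda \mid \mathfrak{l}_{\tn(q_1,q_2)}(\mathfrak{l}^{M_j}_{\tn(p_1,p_2)})\| = \left\| \left(\beta_j^{-1} M_j^{1/\tn(p_1,p_2)}\right)_{j\in\no} \mid \ell_{\tn(q_1,q_2)} \right\|.
\]
This finiteness is exactly condition~\eqref{nucseqsp}, and the $c_0$ caveat for $\tn(q_1,q_2)=\infty$ matches the one in Proposition~\ref{main-seq-nuc} (and also follows from the fact that a non-compact $\id_\beta$ cannot be nuclear, via the compactness criterion in Remark~\ref{rem-compact-seq}). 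This gives both the characterisation and the norm identity.

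I do not expect any serious obstacle here: the substance of the argument has already been done in Lemma~\ref{nucD} and Proposition~\ref{main-seq-nuc}. The only points that require a little care are (i) checking that the identification of $\id_\beta$ with $D_\lambda$ is genuinely via isometric isomorphisms on both domain and target — the weight $\beta_j$ appears only in the domain space, so absorbing it into a diagonal operator acting on the unweighted domain space $\mathfrak{l}_{q_1}(\mathfrak{l}^{M_j}_{p_1})$ is legitimate — and (ii) the bookkeeping for the degenerate endpoints $p_i,q_i\in\{1,\infty\}$, where $\tn$ may equal $\infty$ and one reads $M_j^{1/\infty}=1$ and replaces $\ell_\infty$ by $c_0$; these are the same conventions already fixed in Proposition~\ref{prop-tong} and Proposition~\ref{main-seq-nuc}, so nothing new arises. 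Thus the proof is essentially a two-line deduction from Proposition~\ref{main-seq-nuc} together with an elementary norm computation.
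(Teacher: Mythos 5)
Your proposal is correct and follows essentially the same route as the paper: the paper's own proof likewise identifies $\id_\beta$ with the diagonal operator $D_\lambda$, $\lambda_k=\beta_j^{-1}$ for $k\in I_j$, applies Proposition~\ref{main-seq-nuc}, and computes $\|\lambda\,|\,\mathfrak{l}_{\tn(q_1,q_2)}(\mathfrak{l}^{M_j}_{\tn(p_1,p_2)})\|$ blockwise to obtain $\left\|(\beta_j^{-1}M_j^{1/\tn(p_1,p_2)})_{j}\,|\,\ell_{\tn(q_1,q_2)}\right\|$. Your additional remarks on the isometric identification and the endpoint conventions are consistent with the paper's treatment.
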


\begin{proof}
We apply Proposition~\ref{main-seq-nuc}. The space  $\ds \mathfrak{l}_{q}(\mathfrak{l}^{M_j}_{p})$  is isometrically isomorphic to $\ell_q(\ell_p^{M_j})$. So the embedding $\id_\beta$ corresponds to a diagonal operator $D_\lambda$ from $\mathfrak{l}_{q_1}(\mathfrak{l}^{M_j}_{p_1})$ to $\mathfrak{l}_{q_2}(\mathfrak{l}^{M_j}_{p_2})$  with $\lambda_{l} = \beta^{-1}_j$ if $l \in I_j$ and this gives
\[  \nn{\id_\beta} = \nn{D_\lambda} = \|\lambda %(\lambda_1,\lambda_2, \ldots )
|\mathfrak{l}_{\tn(q_1,q_2)}(\mathfrak{l}^{M_j}_{\tn(p_1,p_2)})\|   = \left\|   \left(\beta_j^{-1} M_j^{\frac{1}{\tn(p_1,p_2)}}\right)_{j\in\no} | \ell_{\tn(q_1,q_2)}\right\| . \] 
\end{proof}
%\bigskip~

\begin{remark}
In case of $M_j\equiv 1$, $\beta_j= \tau_j^{-1}$, Theorem~\ref{nucl-seq-sp} generalises Tong's result \cite{tong}, cf. Proposition~\ref{prop-tong}(i), in a natural way.
 \end{remark}
 
\begin{remark}\label{rem-extremal-seq}
Note that in the extremal cases $\{p_1,p_2\} = \{1,\infty\}$ and $\{q_1,q_2\}=\{1,\infty\}$, that is, whenever $\tn(p_1,p_2)=p^\ast$ and $\tn(q_1,q_2)=q^\ast$, then compactness and nuclearity of the embedding $\id_\beta$ coincide,  recall condition \eqref{cond-comp-id_beta}.  %Proposition~\ref{compact-seq}. 
Moreover, when  $p_1=1$ and $p_2=\infty$, then $\tn(p_1,p_2)=\infty$ and  condition \eqref{nucseqsp} is reduced to $(\beta_j^{-1} )_{j\in\no} \in \ell_{\tn(q_1,q_2)}$, independent of $(M_j)_{j\in\no}$,   
  where for $\tn(q_1,q_2)=\infty$ the space $\ell_\infty$ has to be replaced by $c_0$. 
\end{remark} 

  \begin{remark}
    We would like to give an alternative argument inspired by the proof in \cite{CoDoKu} which works at least in some special cases. The idea is to apply Tong's result \cite{tong} and some factorisation. We sketch it for the
case {$q_1\leq p_1\leq p_2\leq q_2$}. Note that in this situation
    \[
    \frac{1}{\tn(p_1,p_2)}=1-\frac{1}{p_1}+\frac{1}{p_2}\qquad\text{and}\qquad 
    \frac{1}{\tn(q_1,q_2)}=1-\frac{1}{q_1}+\frac{1}{q_2} \ .
    \]
    We decompose
    \[D_\beta: \ell_{q_1}( \ell_{p_1}^{M_j}) \rightarrow 
 \ell_{q_2}\left( \ell_{p_2}^{M_j}\right),\quad D_\beta : (x_{j,m})_{j\in\no, m=1, \dots, M_j} \mapsto (\beta_j^{-1} x_{j,m})_{j\in\no, m=1, \dots, M_j}
    \]
    into
    \[
\begin{array}{ccc}\ell_{q_1}(\ell_{p_1}^{M_j}) & \xrightarrow{~\quad  D_\beta\quad~} &
\ell_{q_2}(\ell_{p_2}^{M_j})
 \\[1ex] D_1 \Big\downarrow & & \Big\uparrow D_2\\[1ex]
\ell_{q_1}(\ell_{q_1}^{M_j}) & \xrightarrow{~\quad  D_0\quad~} &
\ell_{q_2}(\ell_{q_2}^{M_j})
\end{array}
\]    
with
\begin{align*}
%\begin{array}{ll}
  D_1: & \ \ell_{q_1}(\ell_{p_1}^{M_j})  \rightarrow \ell_{q_1}(\ell_{q_1}^{M_j}),\\
  & D_1: (x_{j,m})_{j\in\no, m=1, \dots, M_j} \mapsto \left(M_j^{\frac{1}{p_1}-\frac{1}{q_1}} x_{j,m}\right)_{j\in\no, m=1, \dots, M_j},\\[1ex]
  D_2: & \ \ell_{q_2}(\ell_{q_2}^{M_j})  \rightarrow \ell_{q_2}(\ell_{p_2}^{M_j}),\\
  & D_2: (x_{j,m})_{j\in\no, m=1, \dots, M_j} \mapsto \left(M_j^{\frac{1}{q_2}-\frac{1}{p_2}} x_{j,m}\right)_{j\in\no, m=1, \dots, M_j},\\[1ex]
  D_0: &\ \ell_{q_1}(\ell_{q_1}^{M_j}) \rightarrow \ell_{q_2}(\ell_{q_2}^{M_j}), \\
  & D_0: (x_{j,m})_{j\in\no, m=1, \dots, M_j} \mapsto \left(\beta_j^{-1} M_j^{\frac{1}{\tn(p_1,p_2)}-\frac{1}{\tn(q_1,q_2)}} x_{j,m}\right)_{j\in\no, m=1, \dots, M_j},
%\end{array}
\end{align*}
such that $\ D_\beta = D_2 \circ D_0 \circ D_1$, using also 
$\ \frac{1}{\tn(p_1,p_2)}-\frac{1}{\tn(q_1,q_2)} = 
-\frac{1}{p_1}+\frac{1}{q_1} -\frac{1}{q_2}+\frac{1}{p_2}\ $ in this case. Thus Proposition~\ref{coll-nuc}(iii) leads to
\begin{equation}\label{ddh-1}
\nn{D_\beta} \leq \| D_1\| \ \|D_2\| \ \nn{D_0}.
\end{equation}
We estimate $\|D_1\|$. By our assumption $p_1\geq q_1$ H\"older's inequality leads for $x=(x_{j,m})_{j,m}\in \ell_{q_1}(\ell_{p_1}^{M_j})$ to
\begin{align*}
  \| D_1 x| \ell_{q_1}(\ell_{q_1}^{M_j})\| & =  \Big(\sum_{j=0}^\infty M_j^{(\frac{1}{p_1}-\frac{1}{q_1})q_1} \sum_{m=1}^{M_j} |x_{j,m}|^{q_1} \Big)^{\frac{1}{q_1}} 
  \leq
  %\Big(\sum_{j=0}^\infty M_j^{(\frac{1}{p_1}-\frac{1}{q_1})q_1} \Big(\sum_{m=1}^{M_j} |x_{j,m}|^{p_1}\Big)^{\frac{q_1}{p_1}} M_j^{(\frac{1}{q_1}-\frac{1}{p_1})q_1} \Big)^{\frac{1}{q_1}} \\
  %& =
  \| x | \ell_{q_1}(\ell_{p_1}^{M_j})\|,
\end{align*}
hence $\|D_1\|\leq 1$. Likewise, since $p_2\leq q_2$, by another application of H\"older's inequality,
\begin{align*}
  \| D_2 x| \ell_{q_2}(\ell_{p_2}^{M_j})\| & =  \Big(\sum_{j=0}^\infty M_j^{(\frac{1}{q_2}-\frac{1}{p_2})q_2} \Big(\sum_{m=1}^{M_j} |x_{j,m}|^{p_2}\Big)^{\frac{q_2}{p_2}} \Big)^{\frac{1}{q_2}} %\\
  \leq
  %\Big(\sum_{j=0}^\infty M_j^{(\frac{1}{q_2}-\frac{1}{p_2})q_2} \Big(\sum_{m=1}^{M_j} |x_{j,m}|^{q_2}\Big) M_j^{(\frac{1}{p_2}-\frac{1}{q_2})q_2} \Big)^{\frac{1}{q_2}} \\
  %& =
  \| x | \ell_{q_2}(\ell_{q_2}^{M_j})\|
\end{align*}
for any $x=(x_{j,m})_{j,m}\in \ell_{q_2}(\ell_{q_2}^{M_j})$. Thus $\|D_2\|\leq 1$ and \eqref{ddh-1} implies $\nn{D_\beta} \leq \nn{D_0}$. We would like to use Proposition~\ref{prop-tong}(ii) and have to identify $\ell_{q_r}(\ell_{q_r}^{M_j})$ therefore with $\ell_{q_r}$, $r=1,2$. This can be seen by a bijection like
\begin{align*}
  (x_{j,m})_{j\in\no, m=1, \dots, M_j} \leftrightarrow (y_k)_{k\in\nat}, \quad & y_k=y_{k(j,m)},\ j\in\no, \ m=1, \dots, M_j,\quad \\
  &k(j,m)=\sum_{l=0}^{j-1} M_l + m,
\end{align*}
recall $M_{-1}=0$, i.e., $k(0,m)=m$. Using our previous notation $\alpha_j=\sum_{l=0}^{j-1} M_l$, $j\in\no$, with $\alpha_0=0$, we get $k(j,m)=\alpha_j+m$, $\alpha_{j+1}-\alpha_j=M_j$. For the rewritten sequence $(x_{j,m})_{j\in\no,m=1, \dots M_j}=(y_k)_{k\in\nat}$, $k=k(j,m)$, let $\widetilde{D}_0$ denote the corresponding diagonal operator, acting now as
\[
\widetilde{D}_0 : \ell_{q_1}\rightarrow \ell_{q_2}, \quad \widetilde{D}_0: (y_k)_{k\in\nat} \mapsto \left(\widetilde{\gamma}_k y_k\right)_{k\in\nat},
\]
such that $ D_0 x = \widetilde{D}_0 y$ if $x=(x_{j,m})_{j,m}=(y_k)_{k}=y$ in the above identification. More precisely, using the notation $\gamma_j=\beta_j^{-1} M_j^{\frac{1}{\tn(p_1,p_2)}-\frac{1}{\tn(q_1,q_2)}}$ for the moment, then $\gamma_j x_{j,m} = \widetilde{\gamma}_k y_k$ when $k=k(j,m)$. Consequently, by Prop.~\ref{prop-tong}(ii),
\begin{align*}
\nn{D_\beta}\  & \leq  \nn{D_0: \ell_{q_1}(\ell_{q_1}^{M_j}) \to \ell_{q_2}(\ell_{q_2}^{M_j})}  = \nn{\widetilde{D}_0: \ell_{q_1} \to \ell_{q_2}} \\
  %& = \| \widetilde{\gamma} | \ell_{\tn(q_1,q_2)}\| \\
  &= \Big(\sum_{k=1}^\infty |\widetilde{\gamma}_k |^{\tn(q_1,q_2)}\Big)^{\frac{1}{\tn(q_1,q_2)}} %\\
%&
\quad = \Big(\sum_{j=0}^\infty \sum_{m=\alpha_j+1}^{\alpha_{j+1}} \widetilde{\gamma}_{k(j,m)}^{\tn(q_1,q_2)} \Big)^{\frac{1}{\tn(q_1,q_2)}} \\
&= \Big(\sum_{j=0}^\infty \gamma_j^{\tn(q_1,q_2)} M_j \Big)^{\frac{1}{\tn(q_1,q_2)}} %\\
~  = \| ( \gamma_j M_j^{\frac{1}{\tn(q_1,q_2)}} )_j | \ell_{\tn(q_1,q_2)}\|\\
  &= \left\| (\beta_j^{-1} M_j^{\frac{1}{\tn(p_1,p_2)}} )_j | \ell_{\tn(q_1,q_2)}\right\|
\end{align*}
if $q_2<\infty$. If $q_2=\infty$ and $q_1=1$, then this has to be replaced by $(\beta_j^{-1} M_j^{\frac{1}{\tn(p_1,p_2)}})_j \in c_0$.
  \end{remark}
%%%%%%%%%%%%%%%%%%%%%%%%%%%%%%%%%%%%%%%%%%%%%%%%%%%%%%%%%%%%%%%%%%%

  \section{Nuclear embeddings of   function spaces on domains}\label{sect-fs}

Our second main goal in this paper is to study the nuclearity of Sobolev embeddings acting between function spaces on domains. We briefly recall what is known for bounded Lipschitz domains, and concentrate on quasi-bounded domains afterwards.
  
  \subsection{Embeddings of function spaces on bounded domains}
  In Proposition~\ref{prop-spaces-dom} we have already recalled  the criterion for the compactness of the embedding
  \[ \id_\Omega : \Ae(\Omega) \to \Az(\Omega).\]
Recently Triebel proved in \cite{Tri-nuclear} the following counterpart for its nuclearity.

\begin{proposition}[{\cite{Tri-nuclear,CoDoKu,HaSk-nuc-weight}}]\label{prod-id_Omega-nuc}
  Let $\Omega\subset\rn$ be a bounded Lipschitz domain, $1\leq p_i,q_i\leq \infty$ (with $p_i<\infty$ in the $F$-case), $s_i\in\real$. Then the embedding $\id_\Omega$ given by \eqref{id_Omega} is nuclear if, and only if,
  \begin{equation}
    s_1-s_2 > \nd-\nd\left(\frac{1}{p_2}-\frac{1}{p_1}\right)_+.
\label{id_Omega-nuclear}
  \end{equation}
\end{proposition}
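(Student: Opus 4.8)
The plan is to deduce this from the sequence-space criterion of Theorem~\ref{nucl-seq-sp} by wavelet decomposition: I would treat the $B$-case first, reducing $\id_\Omega^B$ to a diagonal operator $\id_\beta$ of the type covered by that theorem, and then obtain the $F$-case and the mixed $B/F$ cases by sandwiching with the elementary embeddings of Remark~\ref{A-emb} together with the ideal property of nuclear operators.

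For the $B$-case I would fix a compactly supported Daubechies wavelet system that is smooth enough for all the $s_i$ in play and recall (cf.~\cite{T08}) that it yields isomorphisms $\be(\Omega)\cong\ell_{q_1}\!\left(2^{j(s_1-\nd/p_1)}\ell_{p_1}^{M_j}\right)$ and $\bz(\Omega)\cong\ell_{q_2}\!\left(2^{j(s_2-\nd/p_2)}\ell_{p_2}^{M_j}\right)$, where $M_j$ counts the wavelets of level $j$ meeting $\Omega$, so that $M_j\sim 2^{j\nd}$. Composing these with the isometry that pulls the weight $2^{j(s_2-\nd/p_2)}$ out of the target space identifies $\id_\Omega^B$, up to isomorphisms on either side, with the diagonal embedding
\[
\id_\beta:\ell_{q_1}\!\left(\beta_j\,\ell_{p_1}^{M_j}\right)\hookrightarrow\ell_{q_2}\!\left(\ell_{p_2}^{M_j}\right),\qquad \beta_j=2^{j\left(s_1-s_2-\nd/p_1+\nd/p_2\right)}.
\]
By the ideal property (Proposition~\ref{coll-nuc}(iii)) nuclearity of $\id_\Omega^B$ is equivalent to nuclearity of $\id_\beta$, and Theorem~\ref{nucl-seq-sp} says this holds iff $\left(\beta_j^{-1}M_j^{1/\tn(p_1,p_2)}\right)_{j\in\no}\in\ell_{\tn(q_1,q_2)}$, with $c_0$ replacing $\ell_\infty$ when $\tn(q_1,q_2)=\infty$. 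Since this sequence is, up to two-sided constants, the geometric sequence $\left(2^{j\theta}\right)_{j\in\no}$ with $\theta=\nd/\tn(p_1,p_2)-\left(s_1-s_2-\nd/p_1+\nd/p_2\right)$, and a geometric sequence lies in $\ell_r$ for some -- equivalently, every -- finite $r$, and in $c_0$, exactly when $\theta<0$, the fine parameters $q_1,q_2$ drop out; using $1/\tn(p_1,p_2)=1-(1/p_1-1/p_2)_+$ together with the identity $\nd/p_1-\nd/p_2-\nd(1/p_1-1/p_2)_+=-\nd(1/p_2-1/p_1)_+$, the inequality $\theta<0$ turns into precisely \eqref{id_Omega-nuclear}.

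For the $F$-case and the mixed cases I would invoke $B^s_{p,\min(p,q)}(\Omega)\hookrightarrow\F(\Omega)\hookrightarrow B^s_{p,\max(p,q)}(\Omega)$. If $\id_\Omega:\fe(\Omega)\hookrightarrow\fz(\Omega)$ is nuclear, then pre- and postcomposing with $B^{s_1}_{p_1,\min(p_1,q_1)}(\Omega)\hookrightarrow\fe(\Omega)$ and $\fz(\Omega)\hookrightarrow B^{s_2}_{p_2,\max(p_2,q_2)}(\Omega)$ produces, by the ideal property, a nuclear $B$-embedding, whence \eqref{id_Omega-nuclear} follows from the $B$-case. Conversely, if \eqref{id_Omega-nuclear} holds then -- the $B$-criterion being independent of the fine parameters -- the embedding $B^{s_1}_{p_1,\max(p_1,q_1)}(\Omega)\hookrightarrow B^{s_2}_{p_2,\min(p_2,q_2)}(\Omega)$ is nuclear, and factoring $\id_\Omega$ through it via $\fe(\Omega)\hookrightarrow B^{s_1}_{p_1,\max(p_1,q_1)}(\Omega)$ and $B^{s_2}_{p_2,\min(p_2,q_2)}(\Omega)\hookrightarrow\fz(\Omega)$ gives its nuclearity, again by the ideal property. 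The cases where $A$ is of type $B$ on one side and of type $F$ on the other are handled in exactly the same way.

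The main obstacle I expect is the wavelet isomorphism itself: the wavelets must be regular enough for all the smoothness parameters involved, and a little care is needed because $\A(\Omega)$ is defined by restriction -- here one uses the bounded linear extension operator available on bounded Lipschitz domains. Everything after that is bookkeeping, the one conceptual point worth stressing being that $\left(\beta_j^{-1}M_j^{1/\tn(p_1,p_2)}\right)_j$ is geometric, so its membership in $\ell_{\tn(q_1,q_2)}$ is governed by the single threshold $\theta<0$; this both explains the absence of $q_1,q_2$ from \eqref{id_Omega-nuclear} and is what makes the sandwich argument for the $F$-case go through.
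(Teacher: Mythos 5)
Your reduction is sound in outline, but note first that the paper gives no proof of Proposition~\ref{prod-id_Omega-nuc} at all: it is quoted from the literature (\cite{Pie-r-nuc} for sufficiency, \cite{Tri-nuclear} for the full characterisation, with the history spelled out in the remark that follows it). What you propose is essentially the specialisation of the paper's own proof of Theorem~\ref{nuclear-quasi} to the case $b(\Omega)=\nd$, $M_j\sim 2^{j\nd}$; indeed the remark after Theorem~\ref{nuclear-quasi} observes that \eqref{box77} collapses to \eqref{id_Omega-nuclear} when $b(\Omega)=\nd$. Your arithmetic is correct: with $M_j\sim 2^{j\nd}$ the sequence in \eqref{nucseqsp} is geometric, so its membership in $\ell_{\tn(q_1,q_2)}$ (or in $c_0$) is governed by the single threshold $\delta>\nd/\tn(p_1,p_2)$ with $\delta=s_1-\frac{\nd}{p_1}-s_2+\frac{\nd}{p_2}$, which rewrites as \eqref{id_Omega-nuclear}; this explains the absence of $q_1,q_2$, and the $F$-case sandwich via \eqref{B-F-B} and Proposition~\ref{coll-nuc}(iii) is also fine.

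The one genuine gap is the wavelet isomorphism you start from. The characterisation available here (Theorem~\ref{wavelet-th}, and the remark citing \cite[Sections~4.2.4, 4.2.5]{T08}) identifies $\ell_q\big(2^{j(s-\nd/p)}\ell_p^{M_j}\big)$ with $\bar{B}^s_{p,q}(\Omega)$, i.e.\ with $\widetilde{B}^s_{p,q}(\Omega)$ for $s>0$, \emph{not} with the restriction space $B^s_{p,q}(\Omega)$ for which the proposition is stated. Your proposed repair -- ``use the bounded linear extension operator'' -- does not work as written: the factorisation $\id_\Omega=R\circ\id_{\rn}\circ E$ passes through the embedding on $\rn$, which is not even compact, so no nuclearity can be extracted from that diagram. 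The standard fix is a two-sided factorisation through tilde spaces on nested bounded Lipschitz domains $\Omega'\subset\subset\Omega\subset\subset\Omega''$ using multiplication by a cutoff function: $\id_\Omega$ factors through $\widetilde{B}^{s_1}_{p_1,q_1}(\Omega'')\hookrightarrow\widetilde{B}^{s_2}_{p_2,q_2}(\Omega'')$, and conversely $\widetilde{B}^{s_1}_{p_1,q_1}(\Omega')\hookrightarrow\widetilde{B}^{s_2}_{p_2,q_2}(\Omega')$ factors through $\id_\Omega$; since the resulting criterion depends only on $\nd$ and not on the particular domain, nuclearity transfers in both directions. One must also dispose of the parameter values for which $\bar{B}^{s}_{p,q}(\Omega)$ is not covered by Definition~\ref{function spaces} (e.g.\ $s=0$ with $p\in\{1,\infty\}$), say by monotonicity in $s$ or a lifting argument. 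With these points supplied, your argument is complete.
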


\begin{remark}
  The proposition is stated in \cite{Tri-nuclear} for the $B$-case only, but due to the independence of \eqref{id_Omega-nuclear} of the fine parameters $q_i$, $i=1,2$, and in view of (the corresponding counterpart of) \eqref{B-F-B} it can be extended immediately to $F$-spaces. The if-part of the above result is essentially covered by \cite{Pie-r-nuc} (with a forerunner in \cite{PiTri}). Also part of the necessity of \eqref{id_Omega-nuclear} for the nuclearity of $\id_\Omega$ was proved by Pietsch in \cite{Pie-r-nuc} such that only the limiting case $ s_1-s_2 = \nd-\nd(\frac{1}{p_2}-\frac{1}{p_1})_+$ was open for many decades. Only recently Edmunds, Gurka and Lang in \cite{EGL-3} (with a forerunner in \cite{EL-4}) obtained some answer in the limiting case which was then completely solved in \cite{Tri-nuclear}. 
  Note that in \cite{Pie-r-nuc} some endpoint cases (with $p_i,q_i\in \{1,\infty\}$) have already been discussed for embeddings of Sobolev and certain Besov spaces (with $p=q$) into Lebesgue spaces. In our recent paper \cite{HaSk-nuc-weight} we were able to further extend Proposition~\ref{prod-id_Omega-nuc} in view of the borderline cases.

%\begin{remark}\label{rem-p*-tong}
  For better comparison one can reformulate the compactness and nuclearity characterisations of $\id_\Omega$ in \eqref{id_Omega-comp} and \eqref{id_Omega-nuclear} as follows, involving the number $\tn(p_1,p_2)$ defined in \eqref{tongnumber}. Let $1\leq p_i,q_i\leq \infty$, $s_i\in\real$ and 
  \[\delta= s_1 - \frac{\nd}{p_1}-s_2 + \frac{\nd}{p_2}.\]
   Then
  \begin{align*}
    \id_\Omega: \Ae(\Omega) \to \Az(\Omega) \quad  \text{is compact}\quad & \iff \quad \delta> \frac{\nd}{p^\ast}\qquad\text{and}\\
   \id_\Omega: \Ae(\Omega) \to \Az(\Omega) \quad \text{is nuclear}\quad & \iff \quad \delta > \frac{\nd}{\tn(p_1,p_2)}.
  \end{align*}
  Hence apart from the extremal cases $\{p_1,p_2\}=\{1,\infty\}$ (when $\tn(p_1,p_2)=p^\ast$) nuclearity is indeed stronger than compactness, i.e.,
  \[
  \id_\Omega: \Ae(\Omega) \to \Az(\Omega) \] %\quad
  \text{is compact, but not nuclear, \quad if, and only if, }%\quad \iff \quad
  \[\frac{\nd}{p^\ast} < \delta \leq \frac{\nd}{\tn(p_1,p_2)}.
  \]
  We observed similar phenomena in the weighted setting in \cite{HaSk-nuc-weight}, {recall also our discussion in Remark~\ref{rem-extremal-seq} for the sequence space situation. }
 % \end{remark}

\end{remark}
  
\begin{remark}
  In \cite{CoDoKu} the authors dealt with the nuclearity of the embedding $B^{s_1,\alpha_1}_{p_1,q_1}(\Omega)\to B^{s_2,\alpha_2}_{p_2,q_2}(\Omega)$ where the indices $\alpha_i$ represent some  additional logarithmic smoothness. They obtained a characterisation for almost all possible settings of the parameters. Finally, 
in \cite{CoEdKu} some further limiting endpoint situations of nuclear embeddings like $\id:B^{\nd}_{p,q}(\Omega)\to L_p(\log L)_a(\Omega)$ are studied. For some weighted results see also \cite{Parfe-2} and our recent contribution \cite{HaSk-nuc-weight}.
\end{remark}

  %%%%%%%%%%%%%%%%%%%%%%%%%
  \subsection {Embeddings of function spaces on quasi-bounded domains}
Now we study so called quasi-bounded domains and need to introduce them first, together with their wavelet characterisation. % similar to that in Theorem \ref{waveweight}.
An unbounded  domain $\Omega$ in $\R^d$  is called {\em quasi-bounded} if
$$
\lim_{x\in \Omega, |x|\rightarrow \infty} \dist(x, \partial \Omega)\, =\,  0~~.
$$
An unbounded domain is not quasi-bounded if, and only if, it contains infinitely many pairwise disjoint congruent balls, cf. \cite{AF},  page 173. \\

Before we can formulate the properties of embeddings of function spaces defined on quasi-bounded domains, we first need to extend the notion of $\A(\Omega)$ as given in Section~\ref{sect-1}. Here we  follow the ideas of Triebel in \cite{T08}  and define  spaces $\bar{F}^s_{p,q}(\Omega)$ and $\bar{B}^s_{p,q}(\Omega)$.

 We put	
\[
	\sigma_p\, =\, d\left(\frac{1}{p} - 1 \right)_+\quad\text{and}\quad \sigma_{p,q}  =  d\left(\frac{1}{\min (p,q) } - 1 \right)_+ , \quad 0 < p,q \le \infty.
\]

\begin{definition}\label{function spaces} %[cf.\cite{T08}]
Let $\Omega$ be an arbitrary domain in $\R^d$ with $\Omega \not= \R^d$ and let
	\[ 0< p \le \infty , \qquad  0< q \le \infty , \qquad  s\in \R ,
\]
with $p<\infty$ for the $F$-spaces.
\benu[\bfseries\upshape (i)]
\item
Let
\begin{align*}
%	\label{Atilde2}\nonumber
  \widetilde{A}^s_{p,q}(\Omega)\, = \,\Big\{ f\in \cD'(\Omega): \ \exists\ g\in \A(\rn)\ \text{with}\ g|_\Omega = f\ \text{and}\ \supp g\subset  \overline{\Omega}
  %:\quad f= g|_\Omega \; \text{for some}\; g\in {A}^s_{p,q}(\R^d), \; \supp g\subset  \overline{\Omega}
  \Big\},
\end{align*}
equipped with the quotient norm,
\begin{equation}
	\label{Atilde3}\nonumber
	\|f|\widetilde{A}^s_{p,q}(\Omega)\|\, = \,\inf \| g|A^s_{p,q}(\R^d) \|,
\end{equation}
where the infimum is taken over all $g\in {A}^s_{p,q}(\overline{\Omega})$ with $f= g|_\Omega$ .

\item
We define
\begin{equation}
	\label{Fbar}\nonumber
	\bar{F}^s_{p,q}(\Omega)\,=\,
	\begin{cases}
	\widetilde{F}^s_{p,q}(\Omega)& \text{if}\quad 0< p < \infty, \;0 < q \le \infty, \; s > \sigma_{p,q}\, , \\
	{F}^0_{p,q}(\Omega)& \text{if}\quad 1<p<\infty, \;1 \le q\le \infty, \; s = 0\, , \\
	{F}^s_{p,q}(\Omega)& \text{if}\quad 0<p<\infty, \;0 < q \le \infty, \; s < 0 \, ,
	\end{cases}
\end{equation}
and
\begin{equation}
	\label{Bbar}\nonumber
	\bar{B}^s_{p,q}(\Omega)\,=\,
	\begin{cases}
	\widetilde{B}^s_{p,q}(\Omega)& \text{if}\quad 0< p \le \infty, \;0 < q \le \infty , \; s > \sigma_{p}\, , \\
	{B}^0_{p,q}(\Omega)& \text{if}\quad 1<p<\infty, \;0 < q\le \infty , \; s = 0\, , \\
	{B}^s_{p,q}(\Omega)& \text{if}\quad 0<p<\infty, \;0 < q \le \infty , \; s < 0 \, .
	\end{cases}
\end{equation}
\eenu
\end{definition}

Next we make use of some quantities describing the  quasi-boundedness of the domain. For that reason  we introduced in \cite{Leo-Sk-2} {\em a box packing number} $b(\Omega)$ of an open set $\Omega$. We recall the definition here. 

Let $Q_{j,m}$ denote the dyadic cube in $\R^d$  with side-length $2^{-j}$, $j\in \N_0$, given by %whose vertices are adjacent points of the lattice $2^{-j}m$  and $m\in \Z^d$.   More precisely we put
\[ 
Q_{j,m}\, =\, [0,2^{-j})^d + 2^{-j}m ,\qquad m\in \Z^d,\; j\in \N_0\, . 
\] 
Let $\Omega\subset \R^d$ be a non-empty  open set with $\Omega \not= \R^d$. For $j\in\No$ we denote by 
\begin{align*}%\label{box1}
  b_j(\Omega) = \sup \big\{ k\in\nat : & \bigcup_{\ell=1}^k Q_{j,m_\ell}\subset \Omega,\\% \quad
  &Q_{j,m_\ell}\; \text{being %pairwise disjoint 
	dyadic cubes of side-length}\; 2^{-j} \big\}.
\end{align*}
If there is no dyadic cube of size $2^{-j}$ contained in $\Omega$ we put $b_j(\Omega)=0$. The following properties of the sequence $\big(b_j(\Omega)\big)_{j\in\No}$ are  obvious:
\benu[(i)]
%\item $0\le b_j(\Omega)\le \infty$ for any $j\in \N_0$ and $0< b_j(\Omega)$ for sufficiently large $j$.
\item There exists a constant $j_0=j_0(\Omega)\in \N_0$ %and $c_o=c_o(\Omega)>0$
such that for any $j\ge j_0$ we have
\begin{equation}\label{box2}
	0 < 2^{d(j-j_0)} b_{j_0}(\Omega)\le b_j(\Omega) . 
\end{equation}
\item If $|\Omega|<\infty$, then
\begin{equation}\label{box3}
b_j(\Omega) 2^{-jd} \le |\Omega| \, .
\end{equation}
\eenu
It follows from \eqref{box2} that if $0<s< d$, then $\lim\limits_{j\rightarrow\infty}b_j(\Omega) 2^{-js}=\infty$.
Moreover, if $s_1<s_2$ and the sequence $b_j(\Omega) 2^{-js_1}$ is bounded, then $\lim\limits_{j\rightarrow\infty}b_j(\Omega) 2^{-js_2}=0$.
Thus there exists at most one number $b\in \R$ such that $\limsup\limits_{j\rightarrow\infty}b_j(\Omega) 2^{-js}=\infty$ if $s<b$, and $\lim\limits_{j\rightarrow\infty}b_j(\Omega) 2^{-js}=0$ if $s>b$. We put
\begin{equation}\label{box4}
b(\Omega)= \sup\big\{t\in \R_+:  \limsup_{j\rightarrow\infty}b_j(\Omega) 2^{-jt}=\infty \big\}.
\end{equation}

\begin{remark}
For any non-empty open set $\Omega\subset \R^d$ we have $d\le b(\Omega)\le \infty$. If $\Omega$ is unbounded and not quasi-bounded, then $b(\Omega)=\infty$. But there are also quasi-bounded domains such that   $b(\Omega)=\infty$, cf. \cite{Leo-Sk-2}.  %Example \ref{example1aa} below.  
 Moreover it follows from \eqref{box2} and \eqref{box3} that if the measure $|\Omega|$ is finite, then $b(\Omega)=d$.
\end{remark}

%%%%%%%%%%%%%%%%%%%%%%
To illustrate this definition we  give  simple examples of quasi-bounded domains. 

\begin{example}\label{example3}{\rm 
Let $\alpha>0$ and $\beta>0$ . 
We consider the open sets $\omega_\alpha,\omega_{1,\beta}\subset \R^2$ defined as follows
\begin{eqnarray}
\omega_\alpha &=& \{(x,y)\in \R^2: \; |y|< x^{-\alpha}, x > 1 \}\ , \nonumber\\ 
%\qquad \text{and}\qquad\Omega_\alpha = \{(x,y)\in \R^2: \; |y|< |x|^{-\alpha} \}\, ,\\
\omega_{1,\beta} &=& \{(x,y)\in \R^2: \; |y|< x^{-1}(\log{x})^{-\beta}, \,x > e \} \nonumber .
%\qquad \text{and}\qquad\Omega_\alpha = \{(x,y)\in \R^2: \; |y|< |x|^{-\alpha} \}\, . \nonumber
\end{eqnarray}
One can easily calculate that
\begin{align}%\begin{eqnarray}
b_j(\omega_\alpha)\, \sim \,
\begin{cases}
2^{j(\alpha^{-1}+1)} & \text{if}\quad 0<\alpha< 1\, ,\\
j 2^{2j} & \text{if}\quad \alpha = 1\, ,\\
2^{2j}& \text{if}\quad \alpha > 1\, ,
\end{cases} \nonumber
\intertext{and}
b_j(\omega_{1,\beta})\, \sim \,
\begin{cases}\nonumber
  2^{2j} \, j^{1-\beta} & \text{if}\quad \beta < 1\, ,\\
2^{2j} \,\log{j} & \text{if}\quad \beta = 1\, ,\\
 2^{2j}& \text{if}\quad \beta > 1\, .
\end{cases}
\end{align}%\end{eqnarray}
In consequence
\[
b(\omega_\alpha)\, = \,
\begin{cases}
\alpha^{-1}+1 & \text{if}\quad 0<\alpha< 1\, ,\\
2             & \text{if}\quad \alpha \ge  1\, .
\end{cases}
\]
Moreover, the limit $\lim_{j\rightarrow \infty} b_j(\omega_\alpha)2^{-jb(\omega_\alpha)}$ is a positive  finite number if $\alpha \not= 1$. But if $\alpha = 1$, then the limit equals infinity.

In a similar way  
$
b(\omega_{1,\beta})\, = \, 2 $ \mbox{ for all } $\beta$ and $\lim_{j\rightarrow \infty} b_j(\omega_{1,\beta})2^{-jb(\omega_{1,\beta})} = \infty $ if $0<\beta\le 1$.
}
\end{example}
%%%%%%%%%%%%%%%%%
\begin{remark}\label{remquasib}
  \begin{enumerate}[(i)]
\item One can construct a quasi-bounded domain in $\R^d$ with prescribed sequence $b_j(\Omega)$, cf. \cite{Leo-Sk-2}. Some more concrete examples based on this general construction can be found in  \cite[Section 3]{Leo-Sk-3}.
\item Another characterisation of $b(\Omega)$  was proved in  \cite{Leo-Sk-3}. For any domain $\Omega\not= \R^d $ and any $r>0$ we put 
\begin{equation}\label{omegar}
	\Omega_r \, =\, \{ x\in \Omega: \;  \dist(x, \partial \Omega) > r \} . 
\end{equation}
If the domain $\Omega $ is quasi-bounded,  then $|\Omega_r|<\infty$ for any $r>0$ and 
\begin{equation}\label{omegab}
	b(\Omega)\, =\, d + \limsup_{r\rightarrow 0} \left| \frac{\log_2 |\Omega_r|}{\log_2 r}\right| .
\end{equation}
\end{enumerate}\end{remark}

To give the wavelet characterisation of the spaces  $\bar{B}^s_{p,q}(\Omega)$ and $\bar{F}^s_{p,q}(\Omega)$ we need some additional assumptions concerning the underlying domain $\Omega$. Namely we should assume that the domain is 
E-thick (exterior thick) and E-porous, cf. \cite[Chapter 3]{T08}.  Now we recall the definition starting  with porosity.

\begin{definition}\benu[\bfseries\upshape (i)]
  \item
A closed set $\Gamma\subset \R^d$ is said to be porous if there exists a number $0<\eta<1$ such that one  finds for any ball $B(x,r)\subset \R^d$ centered at $x $ and of radius $r$ with $0<r<1$,  a ball $B(y, \eta r)$ with
\[ B(y,\eta r) \subset B(x, r)\qquad \text{and}\qquad B(y,\eta r) \cap \Gamma = \emptyset\, .
\]
\item  A closed set $\Gamma\subset \R^d$ is said to be uniformly porous if it is porous and there is a locally finite positive Radon measure $\mu$ on $\R^d$ such that $\Gamma = \supp \mu$ and
\[ \mu(B(\gamma, r))\, \sim \, h(r)\, ,\qquad \text{with}\qquad  \gamma\in \Gamma, \quad 0<r<1\, , \]
where $h:[0,1]\rightarrow \R$ is a continuous strictly increasing function with $h(0)=0$ and $h(1)=1$ (the equivalence constants are independent of $\gamma$ and $r$).
\eenu
\end{definition}

\begin{remark}
The closed set $\Gamma$ is called an ${\alpha}$-set if there exists a locally finite positive Radon measure $\mu$ on $\R^d$ such that $\Gamma = \supp \mu$ and
\[ \mu(B(\gamma, r))\, \sim \, r^\alpha\, ,\qquad \text{with}\qquad  \gamma\in \Gamma, \quad 0<r<1\, . \]
Naturally $0\le \alpha\le d$.  Any $\alpha$-set with $\alpha<d$ is  uniformly porous.
\end{remark}

\begin{definition}
Let $\Omega$ be an open set in $\R^d$ such that $\Omega\not= \R^d$ and $\Gamma=\partial \Omega$.
\benu[\bfseries\upshape (i)]
\item The domain $\Omega$ is said to be E-thick if one can find for any interior cube $Q^i \subset \Omega$ with
\[ \ell(Q^i) \,\sim\, 2^{-j}, \quad\text{and}\quad \dist(Q^i,\Gamma)\,\sim\, 2^{-j}\, ,\quad j\ge j_0\in \N, \]
a complementing exterior cube
$Q^e \subset \R^d\setminus \Omega$ with
\[ \ell(Q^e) \,\sim\, 2^{-j}, \quad\text{and}\quad \dist(Q^e,\Gamma)\,\sim\, \dist(Q^e,Q^i)\,\sim\,2^{-j}\, ,\quad j\ge j_0\in \N\, . \]
$Q^i$ and $Q^e$ denote  cubes in $\R^d$ with sides parallel to the axes of coordinates. Moreover $\ell(Q)$ denotes the side-length of the cube $Q$. 
\item The domain $\Omega$ is said to be E-porous if there is a number $\eta$ with $0<\eta<1$ such that  one  finds for any ball $B(\gamma ,r)\subset \R^d$ centred  at $\gamma\in \Gamma$ and of radius $r$ with $0<r<1$,  a ball $B(y, \eta r)$ with
\[ B(y,\eta r) \subset B(\gamma, r)\qquad \text{and}\qquad B(y,\eta r) \cap \overline{\Omega} = \emptyset\, .
\]
\item
The domain $\Omega$ is called uniformly E-porous if it is E-porous and $\Gamma$ is uniformly porous.
\eenu
\end{definition}

\begin{remark}
We collect some observations. 
  \benu[(i)]
 \item If $\Omega$ is E-porous, then $\Omega $ is E-thick and $|\Gamma|=0$. On the other hand,
if $\Omega$ is E-thick and $\Gamma$ is an $\alpha$-set, then $\Omega$ is uniformly E-porous and $d-1\le \alpha < d$.
\item There are quasi-bounded domains that are not E-porous or even not E-thick, cf. e.g. \cite{AF},  page 176 for the example of a quasi-bounded domain with empty exterior. 
\item The  domains given in Example \ref{example3} and pointed out in Remark~\ref{remquasib} are not only quasi-bounded but also uniformly E-porous.
\eenu
\end{remark}

If the domain is uniformly E-porous, then one can characterise $\bar{A}^s_{p,q}(\Omega)$ spaces in terms of the wavelet expansion of the distributions. %cf. Section 1.3 below.
Now we give the wavelet characterisation of the spaces  $\bar{F}^s_{p,q}(\Omega)$ and $\bar{B}^s_{p,q}(\Omega)$.
 Let $N_j \in \overline{\N}$, $j\in \N_0$ and $\overline{\N}= \N\cup\{\infty \}$.  %We will work with the following sequence spaces
\begin{eqnarray*}
\ell_q \big( 2^{j\sigma} \ell_p^{N_j}\big)   & := &   \Big\{ \lambda =
(\lambda_{j,k})_{j\in \N_0,k=1,\cdots,N_j} : \quad     \lambda_{j,k} \in \C\, , \\
&&  \qquad
\big\| \, \lambda \, \big|\ell_q \big( 2^{j\sigma} \ell_p^{N_j}\big)\big\| =
 \Big\| \Big(2^{j\sigma}\,
\big( \sum_{k =1}^{N_j}|\lambda_{j,k}\,|^p
\bigg)^{1/p}\Big)_{j=0}^{\infty} \big| \ell_q \Big\| < \infty \Big\}
\end{eqnarray*}
(usual modifications if $p=\infty$ and/or $q=\infty$). If $N_j=\infty$, then $\ell_p^{N_j}= \ell_p$.

\begin{theorem}\label{wavelet-th} Let $\Omega$ be a uniformly E-porous domain in $\R^d$, $\Omega\not= \R^d$. {Let $s\in\real$, $0<p,q\leq\infty$, and $\bar{B}^s_{p,q}(\Omega)$ be defined as in Definition~\ref{function spaces}.} 
 Let $u\in \N_0$. 
Then there exists  an orthonormal basis
\[
 \left\{\Phi^j_r:\; j\in \N_0;\; r=1,\ldots ,N_j\right\},\qquad
 \Phi^j_r \in C^u (\Omega), \quad j\in \N_0, \;r=1,\ldots ,N_j,   
  %\qquad \text{with}\qquad \supp \Phi^j_r \subset B(x_r^j, c_2 2^{-j}), \quad j\in \N_0
\]
%be a  u-wavelet system that is an orthonormal basis 
in $L_2(\Omega)$,   such that  if $u>\max(s,\sigma_p-s)$,
then $f\in \cD'(\Omega)$ is an element of $	\bar{B}^s_{p,q}(\Omega)$  if, and only if, it can be represented as
\begin{equation}\label{wavelet-rep}
	f\,=\, \sum_{j=0}^\infty\sum_{r=1}^{N_j} \lambda_r^j2^{-jd/2} \Phi_r^j, \qquad \lambda\in \ell_q \big( 2^{j(s-\frac{d}{p})} \ell_p^{N_j}\big),
\end{equation}
unconditional convergence being in $\cD'(\Omega)$. 

Furthermore, if $f\in \bar{B}^s_{p,q}(\Omega)$, then the representation \eqref{wavelet-rep} is unique with $\lambda = \lambda(f)$
\begin{equation}\label{wavelet_coef}\nonumber
\lambda_r^j\, =\, 	\lambda_r^j(f)\,=\,  2^{jd/2} (f, \Phi_r^j),
\end{equation}
where $(\cdot , \cdot )$ is a dual pairing and
\begin{equation}\label{wavelet-iso}\nonumber
I:\,\bar{B}^s_{p,q}(\Omega)\ni  f \mapsto \lambda(f) \in \ell_q \big( 2^{j(s-\frac{d}{p})} \ell_p^{N_j}\big)
\end{equation}
is an isomorphism. If in addition $\max{(p,q)}<\infty$\,, then $\left\{\Phi^j_r\right\}$ is an unconditional basis in  $\bar{B}^s_{p,q}(\Omega)$.
\end{theorem}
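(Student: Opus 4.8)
The plan is to build the orthonormal basis from suitable compactly supported Daubechies wavelets on $\R^d$, adapt them near $\partial\Omega$, and then verify the sequence-space characterisation separately in the three regimes of Definition~\ref{function spaces}. First I would recall that for every $u\in\No$ there is a compactly supported orthonormal system $\{2^{-jd/2}\Psi_{G,m}^j\}$ in $L_2(\R^d)$, indexed by the scale $j\in\No$, finitely many types $G$ and translations $m\in\Z^d$, with each $\Psi_G\in C^u$ and at least $u$ vanishing moments, such that --- precisely in the range $u>\max(s,\sigma_p-s)$ --- one has $g\in B^s_{p,q}(\R^d)$ if, and only if, $g=\sum\lambda_{G,m}^j2^{-jd/2}\Psi_{G,m}^j$ with $(\lambda_{G,m}^j)\in\ell_q(2^{j(s-d/p)}\ell_p)$ and $\lambda_{G,m}^j(g)=2^{jd/2}(g,\Psi_{G,m}^j)$; see \cite{T08}. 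This fixes the target sequence space and shows that the hypothesis on $u$ is the natural one.

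To construct $\{\Phi_r^j\}$ I would, at each level $j$, split the wavelets $\Psi_{G,m}^j$ into \emph{interior} ones, whose support is contained in $\Omega$, and \emph{boundary} ones, whose support meets $\Gamma=\partial\Omega$; the interior wavelets are kept unchanged. Here uniform E-porosity is used: near every boundary ball $B(\gamma,r)$ there is an exterior ball $B(y,\eta r)\subset B(\gamma,r)$ with $B(y,\eta r)\cap\overline\Omega=\emptyset$, and the accompanying Radon measure $\mu$ on $\Gamma$ with $\mu(B(\gamma,r))\sim h(r)$ fixes the boundary geometry uniformly across scales. This provides, at each scale $2^{-j}$, a fixed proportion of room outside $\overline\Omega$ in the boundary layer, which I would exploit to replace, scale by scale, the boundary wavelets by a finite orthonormal system of $C^u$ functions supported in $\Omega$, each localised in a cube of side $\sim2^{-j}$ with scale-independent overlap, obtained by a local Gram--Schmidt orthogonalisation against the functions already chosen. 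Adjoining these to the interior wavelets yields an orthonormal basis $\{\Phi_r^j:j\in\No,\ r=1,\dots,N_j\}$ of $L_2(\Omega)$, with $N_j\in\overline{\N}$ the number of functions at level $j$.

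For $s>\sigma_p$ one has $\bar{B}^s_{p,q}(\Omega)=\widetilde{B}^s_{p,q}(\Omega)$, which is (isometrically isomorphic to) the closed subspace of $B^s_{p,q}(\R^d)$ of distributions with support in $\overline\Omega$. I would show that E-porosity forces the $\R^d$-wavelet expansion of such a distribution to be concentrated, modulo that subspace, on the wavelets whose support meets $\overline\Omega$, while E-thickness supplies a bounded extension operator turning any admissible coefficient sequence into an element of $\widetilde{B}^s_{p,q}(\Omega)$; since the boundary orthonormalisation is a local, bounded and boundedly invertible change of wavelet system, transporting these two facts through it gives that $f\mapsto\lambda(f)=(2^{jd/2}(f,\Phi_r^j))_{j,r}$ is an isomorphism of $\bar{B}^s_{p,q}(\Omega)$ onto $\ell_q(2^{j(s-d/p)}\ell_p^{N_j})$, with unconditional convergence of \eqref{wavelet-rep} in $\cD'(\Omega)$ and unconditionality of the basis when $\max(p,q)<\infty$. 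For $s<0$, and for the borderline $s=0$ with $1<p<\infty$, the space $\bar{B}^s_{p,q}(\Omega)=B^s_{p,q}(\Omega)$ is a restriction space carrying no support condition; here I would proceed by transposition (for $1<p<\infty$ and $q<\infty$, $B^s_{p,q}(\Omega)$ is the dual of $\widetilde{B}^{-s}_{p',q'}(\Omega)$, whose characterisation is already available since $-s>0=\sigma_{p'}$, and $\{2^{-jd/2}\Phi_r^j\}$ is its own biorthogonal system), completing the remaining endpoint parameters and the limiting value $s=0$ by the usual monotonicity, $c_0$- and real-interpolation arguments. Throughout, every step stays in the range $u>\max(s,\sigma_p-s)$ fixed in the first step.

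I expect the main obstacle to be the boundary construction itself: producing the $\Phi_r^j$ near $\Gamma$ so that they are simultaneously $C^u$, compactly supported in $\Omega$ with diameter $\sim2^{-j}$ and uniformly bounded overlap, orthonormal and completing the interior wavelets to an orthonormal basis of $L_2(\Omega)$, and carrying enough cancellation to also characterise $\bar{B}^s_{p,q}(\Omega)$ for negative $s$. It is precisely here that \emph{uniform} E-porosity --- the fixed-proportion exterior balls together with the matching Radon measure on $\Gamma$ --- is indispensable, since it is what makes the local orthogonalisation run with $C^u$-norms and overlaps bounded by constants independent of $j$; mere E-porosity or E-thickness would not deliver this uniformity in the boundary layer. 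A secondary technical point is to carry the characterisation through the borderline $s=0$ for the whole range $1<p<\infty$, not only $p=2$ where it is the defining orthonormal-basis property of $L_2(\Omega)$.
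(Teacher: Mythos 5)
The first thing to say is that the paper does not prove this theorem at all: as the remark immediately following the statement records, it is quoted from Triebel \cite[Theorem~3.23]{T08}, with the construction of the underlying $u$-wavelet systems in \cite[Chapter~2]{T08} and a sketch in \cite{Leo-Sk-2}. So there is no in-paper argument to compare with, and your text has to be judged as a free-standing sketch of that construction. In spirit it is aligned with Triebel's route (keep the interior Daubechies wavelets, replace the ones near $\Gamma$ by locally supported $C^u$ functions, transfer the $\R^d$ characterisation), but as a proof it has genuine gaps.

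The central gap is the boundary construction itself, which you assert rather than establish and which you explicitly flag as ``the main obstacle''. Everything in the theorem hinges on producing, at every scale $j$, functions in $C^u(\Omega)$ supported in cubes of side $\sim 2^{-j}$ inside $\Omega$, with overlap and $C^u$-bounds uniform in $j$, with enough cancellation to reach $s<0$, and which together with the interior wavelets form a \emph{complete} orthonormal system in $L_2(\Omega)$; a local Gram--Schmidt step does not obviously preserve the uniform $C^u$ bounds or the moment conditions, and completeness in $L_2(\Omega)$ is not a local property. In \cite[Chapter~2]{T08} this is achieved by a dedicated multiresolution-type construction, not by orthogonalising the boundary Daubechies wavelets. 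A second, related error is the attribution of the hypotheses: the orthonormal $u$-wavelet basis of $L_2(\Omega)$ exists for \emph{arbitrary} domains, E-thickness already yields the characterisation of $\bar{B}^s_{p,q}(\Omega)$ for $s\neq 0$ (cf. \cite[Theorem~3.13]{T08} and the remark after the theorem), and uniform E-porosity is what is needed to capture the borderline $s=0$; your claim that uniform E-porosity is indispensable for running the local orthogonalisation with uniform constants is therefore not the correct use of the hypothesis. Finally, the transposition argument you propose for $s<0$ (and the interpolation step at $s=0$) is only available in the Banach range, whereas the theorem is stated for all $0<p,q\leq\infty$; for $p<1$ or $q<1$ the spaces are quasi-Banach, duality is unavailable, and the negative-smoothness case must be handled directly through the moment conditions of the boundary functions.
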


\begin{remark}
The above theorem was proved by Triebel, cf. \cite[Theorem 3.23]{T08}. He used so called $u$-wavelet systems,  cf. \cite[Chapter 2]{T08} for the construction  of this wavelet system.  The sketch of the construction can be found in \cite{Leo-Sk-2}.  If we assume that the domain $\Omega$ is only E-thick, then the theorem holds for  $\bar{B}^s_{p,q}(\Omega)$ with $s\not= 0$ \cite[Theorem 3.13]{T08}. Similar results were obtained for $\bar{F}^s_{p,q}(\Omega)$ spaces.
\end{remark}

There is a strict relation between the numbers  $N_j$ used in the last theorem and the sequence $b_j(\Omega)$. Namely it was proved in \cite{Leo-Sk-2} that 
\begin{equation}\label{box10}
	2^{d}b_{j-2}(\Omega) \le N_j \le b_j(\Omega)\, .
\end{equation}

%END NEW
\bigskip
Using the above wavelet characterisation we obtained  the necessary and sufficient conditions for  compactness of embeddings of the function spaces defined on a uniformly E-porous quasi-bounded domain. We refer to \cite{Leo-Sk-2} for  the proof. 

\begin{proposition}\label{emb2}
Let $\Omega$ be a uniformly E-porous quasi-bounded domain in $\R^d$  and let  $s_1 > s_2$, {$0<p_i,q_i\leq\infty$  (with $p_i<\infty$ in case of $A=F$), $i=1,2$, and the spaces $\bar{A}^s_{p,q}(\Omega)$ be given by Definition~\ref{function spaces}}. 
\benu[\bfseries\upshape (i)]
\item
If  $b(\Omega)=\infty$\,, then the  embedding
\begin{equation}\label{boxemb}
\bar{A}^{s_1}_{p_1,q_1}(\Omega) \,\hookrightarrow\, \bar{A}^{s_2}_{p_2,q_2}(\Omega)
\end{equation}
is compact
if, and only if, $p_1\le p_2$  and
\begin{equation}\label{boxemb2}
s_1-\frac{d}{p_1}-s_2 + \frac{d}{p_2} >  0\, . %\qquad \text{if}\qquad q^*<\infty  .
\end{equation}
\item
Let  $b(\Omega)<\infty$.  The embedding
\begin{equation}\label{boxemb-a}
\bar{A}^{s_1}_{p_1,q_1}(\Omega) \,\hookrightarrow\, \bar{A}^{s_2}_{p_2,q_2}(\Omega)
\end{equation}
 is compact if
\begin{equation}
\label{box7}
	s_1-s_2 - d \Big(\frac{1}{p_1} - \frac{1}{p_2}\Big)  > \frac{b(\Omega)}{p^*}~. %\qquad\text{for}\qquad %q^*<\infty
\qquad
\end{equation}

If the embedding \eqref{boxemb-a} is compact and $p^\ast=\infty$, %$\frac{1}{p^*}=0$,
then $s_1-s_2 - d(\frac{1}{p_1} - \frac{1}{p_2})>0$.

If the embedding \eqref{boxemb-a} is compact and $p^\ast<\infty$, %$\frac{1}{p^*}>0$,
then $s_1-s_2 - d(\frac{1}{p_1} - \frac{1}{p_2})\ge \frac{b(\Omega)}{p^*}$.
\eenu
\end{proposition}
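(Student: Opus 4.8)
The plan is to transfer the embedding of function spaces to an embedding of weighted vector-valued sequence spaces of type $\ell_q(\beta_j\ell_p^{M_j})$ via the wavelet isomorphism, apply the compactness criterion recalled in Remark~\ref{rem-compact-seq}, and then convert the resulting condition on the basis cardinalities $N_j$ into one on the box packing number $b(\Omega)$. First I would fix a single $u$-wavelet system from Theorem~\ref{wavelet-th} with $u$ large enough to serve both parameter triples; since $\Omega$ is uniformly E-porous this yields isomorphisms $I_i:\bar{B}^{s_i}_{p_i,q_i}(\Omega)\cong\ell_{q_i}\big(2^{j(s_i-d/p_i)}\ell_{p_i}^{N_j}\big)$, $i=1,2$, through the very same coefficient sequences, so the embedding \eqref{boxemb}, \eqref{boxemb-a} in the $B$-case is compact if and only if the canonical inclusion between these two sequence spaces is. Dividing out the target weight and substituting, that inclusion is equivalent (up to isometric isomorphisms) to $\id_\beta:\ell_{q_1}(\beta_j\ell_{p_1}^{N_j})\to\ell_{q_2}(\ell_{p_2}^{N_j})$ with $M_j=N_j$ and $\beta_j=2^{j\delta}$, where $\delta=s_1-\tfrac{d}{p_1}-s_2+\tfrac{d}{p_2}=(s_1-s_2)-d\big(\tfrac1{p_1}-\tfrac1{p_2}\big)$. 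For the $F$-spaces I would not repeat the wavelet argument but instead use the sandwich \eqref{B-F-B}, $\bar{B}^s_{p,\min(p,q)}(\Omega)\hookrightarrow\bar{F}^s_{p,q}(\Omega)\hookrightarrow\bar{B}^s_{p,\max(p,q)}(\Omega)$: as the asserted condition does not involve the fine indices $q_i$, the $F$-statement follows at once from the $B$-statement.

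Next I would invoke \eqref{cond-comp-id_beta}: $\id_\beta$ is compact if and only if $\big(2^{-j\delta}N_j^{1/p^\ast}\big)_{j\in\no}\in\ell_{q^\ast}$, with $\ell_\infty$ replaced by $c_0$ when $q^\ast=\infty$. By \eqref{box10} we have $2^{d}b_{j-2}(\Omega)\le N_j\le b_j(\Omega)$, so up to an index shift and fixed constants this condition is equivalent to $\big(2^{-j\delta}b_j(\Omega)^{1/p^\ast}\big)_{j}\in\ell_{q^\ast}$ (resp.\ $c_0$). If $p_1\le p_2$, then $p^\ast=\infty$, the factor $b_j(\Omega)^{1/p^\ast}$ drops out, and the condition collapses to $(2^{-j\delta})_j\in\ell_{q^\ast}$, which holds exactly when $\delta>0$; this settles all assertions for $p_1\le p_2$ (in particular the ``$p^\ast=\infty$'' clauses of part~(ii) and the $\delta>0$ part of part~(i)). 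If $p_1>p_2$, then $p^\ast<\infty$ and I would rewrite $2^{-j\delta}b_j(\Omega)^{1/p^\ast}=\big(b_j(\Omega)\,2^{-j\delta p^\ast}\big)^{1/p^\ast}$, reducing everything to the decay behaviour of the sequence $b_j(\Omega)\,2^{-j\delta p^\ast}$.

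For the final step I would use the dichotomy recorded just before \eqref{box4}: for $t<b(\Omega)$ one has $\limsup_j b_j(\Omega)2^{-jt}=\infty$, while for $t>b(\Omega)$ one has $b_j(\Omega)2^{-jt}\to 0$. If $b(\Omega)=\infty$ and $p_1>p_2$ (so $\delta p^\ast<b(\Omega)$), the first alternative applies with $t=\delta p^\ast$, hence $\big(2^{-j\delta}b_j(\Omega)^{1/p^\ast}\big)_j\notin c_0$ and $\id_\beta$ fails to be compact; together with the previous paragraph this proves part~(i). If $b(\Omega)<\infty$ and $p_1>p_2$: for the sufficiency of \eqref{box7}, i.e.\ $\delta p^\ast>b(\Omega)$, pick $t$ with $b(\Omega)<t<\delta p^\ast$; then $b_j(\Omega)2^{-jt}$ is bounded, so $b_j(\Omega)2^{-j\delta p^\ast}\le c\,2^{-j(\delta p^\ast-t)}$ and $\big(2^{-j\delta}b_j(\Omega)^{1/p^\ast}\big)_j$ decays geometrically, lying in every $\ell_{q^\ast}$ (and in $c_0$), which gives compactness. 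For the necessity statements, compactness forces $\big(2^{-j\delta}b_j(\Omega)^{1/p^\ast}\big)_j\in c_0$, i.e.\ $b_j(\Omega)2^{-j\delta p^\ast}\to 0$; were $\delta p^\ast<b(\Omega)$, the first alternative of the dichotomy would give $\limsup_j b_j(\Omega)2^{-j\delta p^\ast}=\infty$, a contradiction, so $\delta p^\ast\ge b(\Omega)$, which is exactly the claimed necessary condition in the case $p^\ast<\infty$.

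The step I expect to be the crux is the translation into $b(\Omega)$ just described: the number $b(\Omega)$ is defined purely through a $\limsup$, so it controls neither the decay rate of $b_j(\Omega)2^{-jt}$ for $t>b(\Omega)$ nor the growth rate for $t<b(\Omega)$, and the interposed exponent $t$ is the only device available to convert qualitative statements about $\limsup$'s into the summability required by \eqref{cond-comp-id_beta}. It is also the reason why the borderline case $\delta p^\ast=b(\Omega)$ is left undecided: there $b_j(\Omega)2^{-j\delta p^\ast}$ may or may not be summable depending on finer properties of $(b_j(\Omega))_j$, as already illustrated by Example~\ref{example3}. The remaining steps are routine: the reduction in Step~1 only needs that Theorem~\ref{wavelet-th} applies across the full admissible parameter range (true for uniformly E-porous $\Omega$ once $u$ is chosen large), together with the $B$-$F$ sandwich \eqref{B-F-B} and, to shrink the casework, the monotonicity embeddings of Remark~\ref{A-emb}.
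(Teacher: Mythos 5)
Your argument is correct and is essentially the approach the paper itself relies on: the paper does not prove Proposition~\ref{emb2} in situ (it refers to \cite{Leo-Sk-2}), but your reduction via the wavelet isomorphism of Theorem~\ref{wavelet-th} to the sequence-space criterion \eqref{cond-comp-id_beta}, followed by the replacement of $N_j$ by $b_j(\Omega)$ through \eqref{box10} and the $\limsup$-dichotomy around \eqref{box4}, is exactly the scheme of \cite{Leo-Sk-2} and is reproduced almost verbatim in the paper's proof of the nuclearity analogue, Theorem~\ref{nuclear-quasi}. Your treatment of the undecided borderline case $\delta p^\ast=b(\Omega)$ and of the $F$-spaces via \eqref{B-F-B} likewise matches the paper's own remarks.
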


\begin{remark} We collect some results about the sharpness of the above statement.
  \begin{enumerate}[(i)]
\item In case of $b(\Omega) < \infty$  one can prove that \eqref{box7} is a sufficient and necessary condition for  compactness of the embeddings, except of the case $p^\ast<\infty $ and $\limsup_{j\rightarrow \infty} b_j(\Omega)2^{-jb(\Omega)}=0$.
\item If the domain $\Omega$ is not quasi-bounded, then the embedding  \eqref{boxemb-a} is never compact, cf. \cite{Leo-Sk-2}. 
\item  If  $\Omega$ is a domain in $\R^d$ with finite Lebesgue measure, then the embedding  \eqref{boxemb-a} is   compact if, and only if,
\begin{eqnarray*}
	s_1-s_2 - \Big(\frac{d}{p_1} - \frac{d}{p_2}\Big)_+  >  0  \,  .
\end{eqnarray*}
cf. \cite{Leo-Sk-2}.   Thus for a set of finite Lebesgue measure we get the same conditions for  compactness as for bounded smooth domains, {recall Proposition~\ref{prop-spaces-dom}}.  
\item
 On the other hand, if the domain is not quasi-bounded,  then the Sobolev embeddings are never compact. So the most interesting case are the quasi-bounded domains with infinite measure. If $\Omega$ is such a domain,  then all numbers $b_j(\Omega)$ are finite. But in contrast to the  domain with a finite measure, the numbers $b_j(\Omega)$ are not asymptotically equivalent to $2^{jd}$.
 \item { One can use the number $b(\Omega)$ to describe quantitative properties of corresponding compact embeddings  in terms of the asymptotic behaviour of their  $s$-numbers and entropy numbers, cf. \cite{DZ,Leo-Sk-2, Leo-Sk-3}.  }
\end{enumerate} 
\end{remark}

\begin{theorem}\label{nuclear-quasi}
Let $\Omega$ be a uniformly E-porous quasi-bounded domain in $\R^d$ and  let $s_1 > s_2$. Assume $1\leq p_i,q_i\leq\infty$, $i=1,2$. %  (with $p_i<\infty$ in the $F$-case), $i=1,2$. 
\benu[\bfseries\upshape (i)]
%\begin{idemize}
%\item{(i)} If  and let $b(\Omega) < \infty$ .
\item If  $b(\Omega)=\infty$\,, then the  embedding
\begin{equation}\label{boxemb11}
\id^B_\Omega : \bar{B}^{s_1}_{p_1,q_1}(\Omega) \,\hookrightarrow\, \bar{B}^{s_2}_{p_2,q_2}(\Omega)
\end{equation}
is nuclear
if, and only if, $p_1=1$,  $p_2=\infty$  and $s_1-s_2>d$. 
%\begin{equation}\label{boxemb22}
%s_1-\frac{d}{p_1}-s_2 + \frac{d}{p_2} >  0\, . %\qquad \text{if}\qquad q^*<\infty  .
%\end{equation}
\item
Let  $b(\Omega)<\infty$.   The embedding $\ \id^B_\Omega \ $ given by \eqref{boxemb11} 
%\begin{equation}\label{boxemb-aa}
%\id^B_\Omega : \bar{B}^{s_1}_{p_1,q_1}(\Omega) \,\hookrightarrow\, \bar{B}^{s_2}_{p_2,q_2}(\Omega)
%\end{equation}
 is nuclear if
\begin{equation}
\label{box77}
	s_1-s_2 - d \Big(\frac{1}{p_1} - \frac{1}{p_2}\Big)  > \frac{b(\Omega)}{\tn(p_1,p_2)}~. %\qquad\text{for}\qquad %q^*<\infty
\qquad
\end{equation}
Conversely, if the embedding \eqref{boxemb11} is nuclear and $\tn(p_1,p_2)=\infty$, %$\frac{1}{\tn(p_1,p_2)}=0$,
that is, $p_1 = 1$ and $p_2=\infty$, then $s_1-s_2 - d(\frac{1}{p_1} - \frac{1}{p_2})>0$.

If the embedding \eqref{boxemb11} is nuclear and $\tn(p_1,p_2)<\infty$, % $\frac{1}{\tn(p_1,p_2)}>0$,
then $s_1-s_2 - d(\frac{1}{p_1} - \frac{1}{p_2})\ge \frac{b(\Omega)}{\tn(p_1,p_2)}$.
\eenu
\end{theorem}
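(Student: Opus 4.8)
\emph{Proof plan.} The strategy is to transport the question to the vector-valued sequence spaces of Section~\ref{sect-nuc} via the wavelet isomorphism of Theorem~\ref{wavelet-th}, then to invoke Theorem~\ref{nucl-seq-sp}, and finally to translate the resulting sequence condition into the stated conditions on $s_1-s_2$ using the two-sided bound \eqref{box10} and the definition \eqref{box4} of $b(\Omega)$.

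First I would fix a $u$-wavelet system with $u$ large enough that Theorem~\ref{wavelet-th} applies simultaneously to $\bar{B}^{s_1}_{p_1,q_1}(\Omega)$ and $\bar{B}^{s_2}_{p_2,q_2}(\Omega)$; then the same orthonormal family $\{\Phi^j_r\}$ and the same numbers $N_j$ serve both spaces, and $I_i\colon f\mapsto(\lambda^j_r(f))$ is an isomorphism of $\bar{B}^{s_i}_{p_i,q_i}(\Omega)$ onto $\ell_{q_i}\bigl(2^{j(s_i-d/p_i)}\ell_{p_i}^{N_j}\bigr)$, $i=1,2$. Since $I_1,I_2$ and their inverses are bounded, the ideal property of nuclear operators (Proposition~\ref{coll-nuc}(iii)) shows that $\id^B_\Omega$ is nuclear if and only if the induced natural embedding of $\ell_{q_1}\bigl(2^{j(s_1-d/p_1)}\ell_{p_1}^{N_j}\bigr)$ into $\ell_{q_2}\bigl(2^{j(s_2-d/p_2)}\ell_{p_2}^{N_j}\bigr)$ is nuclear. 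Pulling the weights out of source and target by diagonal isometric rescalings, this embedding is in turn equivalent to $\id_\beta\colon\ell_{q_1}(\beta_j\ell_{p_1}^{N_j})\to\ell_{q_2}(\ell_{p_2}^{N_j})$ with $\beta_j=2^{j\delta}$, where $\delta:=s_1-s_2-\tfrac{d}{p_1}+\tfrac{d}{p_2}$. Hence, by Theorem~\ref{nucl-seq-sp}, $\id^B_\Omega$ is nuclear if and only if
\[
\bigl(2^{-j\delta}\,N_j^{1/\tn(p_1,p_2)}\bigr)_{j\in\no}\in\ell_{\tn(q_1,q_2)},
\]
with $\ell_\infty$ replaced by $c_0$ when $\tn(q_1,q_2)=\infty$. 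It remains to analyse this membership.

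For part (i), $b(\Omega)=\infty$ means by \eqref{box4} that $\limsup_{j}b_j(\Omega)2^{-jt}=\infty$ for every $t>0$, so by the lower estimate in \eqref{box10} the $N_j$ exceed any prescribed geometric growth along suitable subsequences. If $\tn(p_1,p_2)<\infty$, then $1/\tn(p_1,p_2)>0$ and $2^{-j\delta}N_j^{1/\tn(p_1,p_2)}$ is unbounded for every $\delta$, so $\id^B_\Omega$ is not nuclear. The only remaining possibility is $\tn(p_1,p_2)=\infty$, i.e.\ $p_1=1$ and $p_2=\infty$, where $N_j^{1/\tn(p_1,p_2)}\equiv 1$ and the condition collapses to $(2^{-j\delta})_j\in\ell_{\tn(q_1,q_2)}$ (resp.\ $c_0$), equivalently $\delta>0$, i.e.\ $s_1-s_2>d$. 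For part (ii), $b(\Omega)<\infty$: for the sufficiency, given $\delta>b(\Omega)/\tn(p_1,p_2)$ choose $\varepsilon>0$ with $\delta>(b(\Omega)+\varepsilon)/\tn(p_1,p_2)$; then \eqref{box4} gives $b_j(\Omega)\le c_\varepsilon 2^{j(b(\Omega)+\varepsilon)}$, so by the upper estimate in \eqref{box10} the sequence $2^{-j\delta}N_j^{1/\tn(p_1,p_2)}$ decays geometrically and hence lies in every $\ell_r$ and in $c_0$, so the condition holds irrespective of $q_1,q_2$. For the necessity, suppose $\id^B_\Omega$ is nuclear; if $\tn(p_1,p_2)=\infty$ the condition forces $2^{-j\delta}\to0$, hence $\delta>0$, and if $\tn(p_1,p_2)<\infty$ we argue by contradiction: were $\delta<b(\Omega)/\tn(p_1,p_2)$, pick $t$ with $\delta\,\tn(p_1,p_2)<t<b(\Omega)$, so that $b_j(\Omega)\ge 2^{jt}$ along a subsequence, whence by the lower estimate in \eqref{box10} $2^{-j\delta}N_j^{1/\tn(p_1,p_2)}\ge c\,2^{j(t/\tn(p_1,p_2)-\delta)}\to\infty$ along that subsequence, contradicting the condition; therefore $\delta\ge b(\Omega)/\tn(p_1,p_2)$.

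The main obstacle is the bookkeeping in the last step: matching strict against non-strict inequalities, handling the harmless index shift by $2$ in \eqref{box10} (which affects constants, not growth exponents), and --- exactly as for the compactness statement in Proposition~\ref{emb2} --- being forced to leave the borderline case $\delta=b(\Omega)/\tn(p_1,p_2)$ with $\tn(p_1,p_2)<\infty$ undecided, since there the membership of $\bigl((N_j2^{-jb(\Omega)})^{1/\tn(p_1,p_2)}\bigr)_{j\in\no}$ in $\ell_{\tn(q_1,q_2)}$ depends on the finer asymptotics of $b_j(\Omega)$ and on $q_1,q_2$; this is why the result is only ``almost complete''.
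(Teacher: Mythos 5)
Your proposal is correct and follows essentially the same route as the paper: wavelet isomorphism onto $\ell_{q_i}\bigl(2^{j(s_i-d/p_i)}\ell_{p_i}^{N_j}\bigr)$, reduction via diagonal rescalings and the ideal property to the nuclearity criterion for $\id_\beta$ with $\beta_j=2^{j\delta}$, and then translation of the membership condition $(2^{-j\delta}N_j^{1/\tn(p_1,p_2)})_j\in\ell_{\tn(q_1,q_2)}$ (resp. $c_0$) into the stated inequalities using \eqref{box10} and \eqref{box4}. The only cosmetic difference is that you argue the necessity in (ii) by contradiction where the paper deduces $\delta>s/\tn(p_1,p_2)$ directly for every $s<b(\Omega)$; both are equivalent, and your closing remarks on the index shift and the undecided borderline case match the paper's own caveats.
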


\begin{proof}

  {\em Step 1.}~
%  Note first that it is sufficient to consider $B$-spaces only, as the counterpart for $F$-spaces is due to the (adapted version of the) embeddings \eqref{B-F-B} and the independence of all the above conditions from $q_1,q_2$. 
  At first we show that 
  \begin{align}
    \label{nuc-qb-0}
    \id^B_\Omega\quad\text{is nuclear} & \quad \text{if, and only if,}\\%\quad \\
    \nonumber & \begin{cases} 
    (2^{-j\delta} b_j(\Omega)^{\frac{1}{\tn(p_1,p_2)}})_{j\in\no} \in \ell_{\tn(q_1,q_2)} &\text{if}\ \tn(q_1,q_2)<\infty, \\  (2^{-j\delta} b_j(\Omega)^{\frac{1}{\tn(p_1,p_2)}})_{j\in\No}\in c_0 & \text{if}\ \tn(q_1,q_2)=\infty. \end{cases}
  \end{align}
Recall $ \delta = (s_1-\frac{d}{p_1})-(s_2-\frac{d}{p_2})$. To verify this, we consider the following diagram:
\[
\begin{array}{ccccc} \bar{B}^{s_1}_{p_1,q_1}(\Omega) & \xrightarrow{~\quad  I_1 \quad~} & \ell_{q_1}(2^{j(s_1-\frac{d}{p_1})}\ell_{p_1}^{N_j}) & \xrightarrow{~\quad  D_1~} &
\ell_{q_1}(\ell_{p_1}^{N_j})
 \\[1ex] \id^B_\Omega \Big\downarrow & &&& \Big\downarrow D_\lambda\\[1ex]
\bar{B}^{s_2}_{p_2,q_2}(\Omega)&\xleftarrow{~\quad  I_2^{-1} \quad~}  &\ell_{q_2}(2^{j(s_2-\frac{d}{p_2})}\ell_{q_2}^{N_j}) & \xleftarrow{~\quad  D^{-1}_2\quad~} &
\ell_{q_2}(\ell_{{p_2}}^{N_j})
\end{array}
\]    
with
$I_1$ and $I_2^{-1}$ denote the wavelet-isomorphism from Theorem \ref{wavelet-th} and for $i = 1 , 2$ we define
\begin{align*}
%\begin{array}{ll} 
  & D_i:  \ \ell_{q_i}(2^{j(s_i-\frac{d}{p_i})}\ell_{p_i}^{N_j}) \rightarrow \ell_{q_i}(\ell_{q_i}^{N_j}),\\
  &\qquad\quad~ D_i: (x_{j,m})_{j\in\no, m=1, \dots, N_j}\mapsto \left( 2^{j(s_i-\frac{d}{p_i})}x_{j,m}\right)_{j\in\no, m=1, \dots, N_j},\\[1ex]
%& x_{j,m} = \Bigg\{
%\begin{array}{lcl} 2^{j(s_1-\frac{d}{p_1})}  \lambda_{j,m} & if & m = 1, \cdots ,N_j~~j\in\N_0 \\
%0  &if &m = N_j + 1, \cdots ,b_j(\Omega) ~~j\in\N_0
%\end{array} 
%\\[1ex]
  & D^{-1}_i:  \ \ell_{q_i}(\ell_{{p_i}}^{N_j}) \rightarrow \ell_{q_i}(2^{j(s_i-\frac{d}{p_i})}\ell_{p_i}^{N_j}), \\
  &\qquad\quad~ D^{-1}_i: (x_{j,m})_{j\in\no, m=1, \dots, N_j} \mapsto \left(  2^{-j(s_i-\frac{d}{p_i})}x_{j,m}\right)_{j\in\no, m=1, \dots, N_j},\\[1ex]
  & D_\lambda:  \ \ell_{q_1}(\ell_{{p_1}}^{N_j}) \rightarrow \ell_{q_2}(\ell_{{p_2}}^{N_j}), \\
  &\qquad\quad~ D_\lambda: (x_{j,m})_{j\in\no, m=1, \dots, N_j} \mapsto \left( 2^{j((s_2-\frac{d}{p_2})-(s_1-\frac{d}{p_1}))} x_{j,m}\right)_{j\in\no, m=1, \dots, M_j},
%\end{array}
\end{align*}
such that $\ \id^B_\Omega = I_2^{-1} \circ  D^{-1}_2 \circ D_\lambda \circ D_1 \circ I_1  \ $ and, vice versa,  $\ D_\lambda = D_2 \circ I_2 \circ \id^B_\Omega \circ I^{-1}_1 \circ D^{-1}_1$.
Then Proposition \ref{coll-nuc}(iii) applied to $\id^B_\Omega$ and $D_\lambda$, respectively, implies that the embedding $\id^B_\Omega$ is nuclear if, and only if, the operator $D_\lambda$ is nuclear, which -- in view of Proposition \ref{main-seq-nuc} -- is the case, if, and only if, 
the sequence $\lambda$ with $\lambda_{j,m} = 2^{-j\delta}$ for $j\in\no$, $m=1, \dots, N_j$, belongs to  $\ds \mathfrak{l}_{\tn(q_1,q_2)}(\mathfrak{l}^{N_j}_{\tn(p_1,p_2)})$, replaced by $c_0$ if $\tn(q_1,q_2)=\infty$.  But by definition of the spaces,
\begin{equation}\label{nuc-qb-1}
  \|\lambda
| \mathfrak{l}_{\tn(q_1,q_2)}(\mathfrak{l}^{N_j}_{\tn(p_1,p_2)})\| = \Big\| \big(2^{-j\delta} N_j^{\frac{1}{\tn(p_1,p_2)}}\big)_{j\in\no} | \ell_{\tn(q_1,q_2)}\Big\|,
\end{equation}
so $\id^B_\Omega$ is nuclear, if, and only if, $(2^{-j\delta} N_j^{\frac{1}{\tn(p_1,p_2)}})_{j\in\No}\in \ell_{\tn(q_1,q_2)}$, replaced by $(2^{-j\delta} N_j^{\frac{1}{\tn(p_1,p_2)}})_{j\in\No}\in c_0$ if $\tn(q_1,q_2)=\infty$. 

%Conversely, if we assume that $\id_\Omega$ is nuclear, then again by Proposition \ref{coll-nuc}(iii) also $D_\lambda$ becomes nuclear and consequently  $ \|\lambda| X_{\tn(p_1,p_2),\tn(q_1,q_2)}^{N}\| $ has to be finite. Again using \eqref{nuc-qb-1} we can thus conclude that $\id_\Omega$ is nuclear if, and only if,  $(2^{-j\delta} N_j^{\frac{1}{\tn(p_1,p_2)}})_{j\in\no} \in \ell_{\tn(q_1,q_2)}$, with $(2^{-j\delta} N_j^{\frac{1}{\tn(p_1,p_2)}})_{j\in\No}\in c_0$ if $\tn(q_1,q_2)=\infty$. 

Because of $2^d b_{j-2}(\Omega) \le N_j \le b_j(\Omega)$ - see \eqref{box10} -  we can replace  $N_j$  by $b_j(\Omega)$ in the definition of $\lambda$, and finally arrive at \eqref{nuc-qb-0}.

In particular, if  $p_1=1$, $p_2=\infty$, that is, $\tn(p_1,p_2)=\infty$, then $\delta = s_1-s_2-d$, so $\id^B_\Omega$ is nuclear if, and only if, $s_1-s_2>d$, which  completes the proof in case of (i) and (ii) in this setting. It remains to deal with the remaining cases for $\tn(p_1,p_2)<\infty$ in dependence on $b(\Omega)$. \\

{\em Step 2.}~ Next we assume that  $b(\Omega)<\infty$ and $\tn(p_1,p_2)<\infty$. Let   $\delta > \frac{b(\Omega)}{\tn(p_1,p_2)}$ and choose $s>b(\Omega)$ such that $\delta > \frac{s}{\tn(p_1,p_2)} >\frac{b(\Omega)}{\tn(p_1,p_2)}$.   Then it follows from the definition of $b(\Omega)$, compare \eqref{box4}, that $ \lim_{j\rightarrow\infty}b_j(\Omega) 2^{-js} = 0 $. This implies %and \eqref{box10} imply
that there exists a constant $c$ such that   $ b_j(\Omega) \le c 2^{js}$. 
Thus $\delta > \frac{s}{\tn(p_1,p_2)}$ implies that $( 2^{-j\delta} b_j(\Omega)^{\frac{1}{\tn(p_1,p_2)}} )_{j\in\no}\in \ell_{\tn(q_1,q_2)}$, with $(2^{-j\delta} b_j(\Omega)^{\frac{1}{\tn(p_1,p_2)}}) \in c_0$ in case of $\tn(q_1,q_2) = \infty$. In view of Step~1, in particular \eqref{nuc-qb-0}, this concludes the proof of the sufficiency of \eqref{box77} for the nuclearity of $\id^B_\Omega$ in case (ii).

%Conversely let $\id_\Omega$ be nuclear and $\frac{1}{\tn(p_1,p_2)} = 0$, that is $p_1=1$ and $p_2=\infty$. Then  $\| 2^{-j\delta} N_j^{\frac{1}{\tn(p_1,p_2)}} | \ell_{\tn(q_1,q_2)}\|  = \| 2^{-j\delta} | \ell_{\tn(q_1,q_2)}\| < \infty$ holds if, and only if, $\delta > 0$ - modification $c_0$ if $\tn(q_1,q_2) = \infty$.

Now let $\id^B_\Omega$ be nuclear. Then \eqref{nuc-qb-0} implies that % and  and $\frac{1}{\tn(p_1,p_2)} > 0$. Then from $\| 2^{-j\delta} N_j^{\frac{1}{\tn(p_1,p_2)}} | \ell_{\tn(q_1,q_2)}\| < \infty$ follows  $2^{-j\delta} N_j^{\frac{1}{\tn(p_1,p_2)}} \longrightarrow 0$ respectively
$2^{-j\delta} b_j(\Omega)^{\frac{1}{\tn(p_1,p_2)}} \rightarrow 0$, which for any $s\in\real$ is equivalent to saying that 
\[ 
 2^{-j(\delta-{\frac{s}{\tn(p_1,p_2)}})}\big( 2^{-js}b_j(\Omega)\big)^{{\frac{1}{\tn(p_1,p_2)}}}\longrightarrow 0 \, .
\]
Assume now $s < b(\Omega)$, then $ \limsup\limits_{j\rightarrow\infty}b_j(\Omega) 2^{-js} = \infty $ and therefore $2^{-j(\delta-{\frac{s}{\tn(p_1,p_2)}})} \rightarrow 0 $. Thus  $\delta > \frac{s}{\tn(p_1,p_2)}$ for any $ s < b(\Omega)$ hence $\delta \ge \frac{b(\Omega)}{\tn(p_1,p_2)}$, and the proof of (ii) is complete.\\

{\em Step 3.}~ Finally we deal with the case $b(\Omega)=\infty$ and know by Step~1, that $p_1=1$, $p_2=\infty$, $s_1-s_2>d$ is sufficient for the nuclearity of $\id^B_\Omega$. So it remains to show that the condition $p_1=1$ and $p_2=\infty$ is also necessary for the nuclearity of $\id^B_\Omega$. Note that 
the necessity of the condition $\delta = s_1-s_2 - d (\frac{1}{p_1} - \frac{1}{p_2})  >0$ follows from Proposition \ref{emb2}  since otherwise the embedding is not compact.
% If $p_1=1$ and $p_2=\infty$, then $\frac{1}{t(p_1,p_2)}=0$  and $s_1-s_2 - d>0$ implies by Theorem \ref{nucl-seq-sp} nuclearity of the embedding.

If $b(\Omega) = \infty$, then by definition \eqref{box4},  for any $s>0$ there is an increasing sequence $j_k$ and a positive constant $c>0$ such that $c2^{j_k s} \le b_{j_k}(\Omega)$. This implies that the condition \eqref{nuc-qb-0} is fulfilled only if $\delta > 0$ and $\tn(p_1,p_2) = \infty$, that is, $p_1=1$ and $p_2 = \infty$. 
%Using the usual embeddings and Proposition \ref{coll-nuc} (iii) this shows, that in case $b(\Omega)=\infty$ for the $F$-spaces the embedding can be never nuclear. 
\end{proof}

\begin{remark}
If  $ 0 < \limsup_{j\rightarrow \infty} b_j(\Omega)2^{-jb(\Omega)} \le \infty $ we can replace in {\em Step 2} of the proof  $s$ with $s < b(\Omega)$ by $b(\Omega)$ itself and obtain that in this case  \eqref{box77} is a sufficient and necessary condition for nuclearity of the embeddings $\id^B_\Omega$. 
\end{remark}

\begin{remark}
The condition \eqref{box77} can be rewritten as follows:\\[-3.5ex]
\benu[(i)] 
\item If $1\le p_2\le p_1 \le \infty$, then it reads as $s_1-s_2 > b(\Omega) - d(\frac{1}{p_2} - \frac{1}{p_1})$ .
\item
If $1\le p_1\le p_2 \le \infty$, then it reads as   $s_1-s_2 > b(\Omega) - (b(\Omega) - d)(\frac{1}{p_1} - \frac{1}{p_2})$.
\eenu
Note that for $b(\Omega) = d$ these findings coincide with the condition \eqref{id_Omega-nuclear} from Proposition \ref{prod-id_Omega-nuc}. Moreover, the condition \eqref{box77} corresponds to \eqref{box7}, when $p^*$ is replaced by $\tn(p_1,p_2)$.
\end{remark}

\begin{remark}
Comparing the quite different behaviour in case (i) and (ii) of Theorem~\ref{nuclear-quasi}, one may also interpret it in the sense that, when the quasi-bounded domain becomes 'larger' in the sense that $b(\Omega)\to \infty$, then to achieve nuclearity in the sense of (ii) one needs to compensate $b(\Omega)$ on the right-hand side of \eqref{box77} by a larger number $\tn(p_1,p_2)$, too, which in the end means $\tn(p_1,p_2)=\infty$, hence only $p_1=1$, $p_2=\infty$. This represents the `smallest' source space and `largest' target space (with an appropriate interpretation in the context of Sobolev embeddings) which is possible. 
\end{remark}

We finally deal with the $F$-case and observe some new phenomenon. Recall that for the compactness result in Proposition~\ref{emb2} as well as for the compactness and nuclearity results for spaces on bounded Lipschitz domains, Propositions~\ref{prop-spaces-dom} and \ref{prod-id_Omega-nuc}, respectively, no difference between $B$- and $F$-spaces appeared. This is now different to some extent.

\begin{corollary}
Let $\Omega$ be a uniformly E-porous quasi-bounded domain in $\R^d$ and  let $s_1 > s_2$, $1\leq p_i<\infty$, $1\leq q_i\leq\infty$, $i=1,2$. %  (with $p_i<\infty$ in the $F$-case), $i=1,2$. 
Then the embedding
\begin{equation}\label{id_Omega-F}
\id^F_\Omega : \bar{F}^{s_1}_{p_1,q_1}(\Omega) \,\hookrightarrow\, \bar{F}^{s_2}_{p_2,q_2}(\Omega)
\end{equation}
is nuclear if $b(\Omega)<\infty$, and \eqref{box77} is satisfied.

Conversely, if $\id^F_\Omega$ is nuclear, then $b(\Omega)<\infty$, and 
$s_1-s_2 - d(\frac{1}{p_1} - \frac{1}{p_2})\ge \frac{b(\Omega)}{\tn(p_1,p_2)}$.

In particular, if $b(\Omega)=\infty$, then $\id^F_\Omega$ is never nuclear.
\end{corollary}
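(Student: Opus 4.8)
The plan is to reduce the $F$-statement to the $B$-statement already established in Theorem~\ref{nuclear-quasi} by squeezing $\id^F_\Omega$ between two Besov embeddings. The tools are the elementary embeddings $\bar{B}^s_{p,\min(p,q)}(\Omega)\hookrightarrow\bar{F}^s_{p,q}(\Omega)\hookrightarrow\bar{B}^s_{p,\max(p,q)}(\Omega)$ (the counterpart of \eqref{B-F-B} for spaces on domains, recall Remark~\ref{A-emb}) together with the ideal property of nuclear operators, Proposition~\ref{coll-nuc}(iii). The decisive structural point is that the hypothesis $1\le p_i<\infty$ rules out the extremal configuration $p_1=1$, $p_2=\infty$, so here $\tn(p_1,p_2)<\infty$ always; this is exactly what makes the $F$-case behave differently from the $B$-case when $b(\Omega)=\infty$.

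For the sufficiency, assume $b(\Omega)<\infty$ and that \eqref{box77} holds. I would factor
\[
\id^F_\Omega:\ \bar{F}^{s_1}_{p_1,q_1}(\Omega)\hookrightarrow\bar{B}^{s_1}_{p_1,\max(p_1,q_1)}(\Omega)\xrightarrow{\ \id^B_\Omega\ }\bar{B}^{s_2}_{p_2,\min(p_2,q_2)}(\Omega)\hookrightarrow\bar{F}^{s_2}_{p_2,q_2}(\Omega),
\]
the outer maps being the elementary embeddings above. The middle map is of the type treated in Theorem~\ref{nuclear-quasi}(ii), and since condition \eqref{box77} does not involve the fine parameters $q_i$, that theorem gives its nuclearity under our hypotheses; Proposition~\ref{coll-nuc}(iii) then yields that $\id^F_\Omega$ is nuclear.

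For the necessity, assume $\id^F_\Omega$ is nuclear. Interchanging the roles of the two elementary embeddings, I would factor
\[
\id^B_\Omega:\ \bar{B}^{s_1}_{p_1,\min(p_1,q_1)}(\Omega)\hookrightarrow\bar{F}^{s_1}_{p_1,q_1}(\Omega)\xrightarrow{\ \id^F_\Omega\ }\bar{F}^{s_2}_{p_2,q_2}(\Omega)\hookrightarrow\bar{B}^{s_2}_{p_2,\max(p_2,q_2)}(\Omega),
\]
so that by Proposition~\ref{coll-nuc}(iii) this Besov embedding is nuclear as well. If $b(\Omega)=\infty$, then Theorem~\ref{nuclear-quasi}(i) would force $p_2=\infty$, contradicting $p_2<\infty$; hence $b(\Omega)<\infty$, and in particular $\id^F_\Omega$ is never nuclear when $b(\Omega)=\infty$. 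With $b(\Omega)<\infty$ and $\tn(p_1,p_2)<\infty$, the necessity part of Theorem~\ref{nuclear-quasi}(ii) gives $s_1-s_2-d(\frac{1}{p_1}-\frac{1}{p_2})\ge\frac{b(\Omega)}{\tn(p_1,p_2)}$, which is the asserted inequality.

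The only step requiring care is the first one: one must make sure that the sandwich $\bar{B}^s_{p,\min(p,q)}(\Omega)\hookrightarrow\bar{F}^s_{p,q}(\Omega)\hookrightarrow\bar{B}^s_{p,\max(p,q)}(\Omega)$ is valid for all the parameter constellations occurring here, including possibly $s_2\le 0$. This follows either from the definition by restriction combined with \eqref{B-F-B} on $\R^d$, or — more robustly — from the wavelet characterisation of Theorem~\ref{wavelet-th} and its $F$-analogue by comparing the associated sequence-space norms. Beyond this, the argument is pure bookkeeping of the fine indices, and I expect no genuine obstacle.
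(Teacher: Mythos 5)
Your argument is correct and is essentially the paper's own proof: both reduce the $F$-case to Theorem~\ref{nuclear-quasi} via the sandwich \eqref{B-F-B} adapted to the $\bar{A}$-spaces on $\Omega$, the ideal property of Proposition~\ref{coll-nuc}(iii), and the independence of the conditions on the fine indices $q_i$, with $p_2<\infty$ ruling out the extremal case $\tn(p_1,p_2)=\infty$. Your explicit factorisations and the remark on checking the sandwich for all parameter constellations merely spell out what the paper leaves implicit.
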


\begin{proof}
  Note first, that in case of $p_i<\infty$, $i=1,2$, that is, when all spaces involved are properly defined, then $\id_\Omega^F$ given by \eqref{id_Omega-F} is nuclear if, and only if, $\id_\Omega^B$ given by \eqref{boxemb11} is nuclear (subject to appropriately adapted fine indices $q_1,q_2$). This is due to the embedding \eqref{B-F-B} (adapted to spaces on domains) and the independence of all the conditions in Theorem~\ref{nuclear-quasi} on the fine indices $q_i$, $i=1,2$. Hence, if $b(\Omega)=\infty$ and $\id_\Omega^F$ given by \eqref{id_Omega-F} was nuclear, then in view of \eqref{B-F-B},
  \[
  \id_\Omega^B: \bar{B}^{s_1}_{p_1,\min(p_1,q_1)}(\Omega) \hookrightarrow \bar{B}^{s_2}_{p_2,\max(p_2,q_2)}(\Omega)
  \]
  is nuclear, which by Theorem~\ref{nuclear-quasi}(i) implies, in particular, $p_2=\infty$, which is not admitted.
  Assume now $b(\Omega)<\infty$, then by an argument similar to the above one, we obtain that \eqref{box77} is sufficient for the nuclearity of $\id_\Omega^F$, and $s_1-s_2 - d(\frac{1}{p_1} - \frac{1}{p_2})\ge \frac{b(\Omega)}{\tn(p_1,p_2)}$ necessary. Recall that $p_2<\infty$ implies $\tn(p_1,p_2)<\infty$.
\end{proof}

%\section*{Acknowledgements}
%The authors are deeply indebted to  the reviewers of the first version of this manuscript for  their careful reading and the valuable hints which helped us very much to improve the paper. 

%The first and third author were partially supported by the German Research Foundation (DFG), Grant no. Ha 2794/8-1. The third author  was also supported by National Science Center, Poland,  Grant No. 2013/10/A/ST1/00091.

\bigskip\bigskip~

{\small
\noindent%\begin{minipage}[t]{0.3\textwidth}
Dorothee D. Haroske\\
Institute of Mathematics \\
Friedrich Schiller University Jena\\
07737 Jena\\
Germany\\
%\vfill
{\tt dorothee.haroske@uni-jena.de}\\[4ex]
%\end{minipage}\hfill\begin{minipage}[t]{0.3\textwidth}
Hans-Gerd Leopold\\
Institute of Mathematics \\
Friedrich Schiller University Jena\\
07737 Jena\\
Germany\\
%\vfill
{\tt hans-gerd.leopold@uni-jena.de}\\[4ex]
%\end{minipage}\hfill\begin{minipage}[t]{0.3\textwidth}
Leszek Skrzypczak\\
Faculty of Mathematics \& Computer Science\\
Adam  Mickiewicz University\\
ul. Uniwersytetu Pozna\'nskiego 4\\
61-614 Pozna\'n\\
Poland\\
{\tt lskrzyp@amu.edu.pl}
%\end{minipage}\\[0ex]
}

%%%%%%%%%%%%%%%%%%%5
%%%%%%%%%%%%%%%%%%%%%%%%%%
%%%%%%%%%%%%%%%%%%%%%%%%
%%%%%%%%%%%%%%%%%%%%%%%
\end{document}